\documentclass[11pt]{amsart}
\usepackage[utf8]{inputenc}
\usepackage[T1]{fontenc}
\usepackage{lmodern}
\usepackage{microtype}
\usepackage[leqno]{amsmath}
\usepackage{amssymb}
\usepackage{mathtools}
\usepackage{enumerate}
\usepackage{graphicx}
\usepackage[usenames,dvipsnames]{xcolor}
\usepackage[
colorlinks=true,linkcolor=NavyBlue,urlcolor=RoyalBlue,citecolor=PineGreen,bookmarks=true,bookmarksdepth=3,bookmarksopen=true,bookmarksopenlevel=2,unicode=true,linktocpage]{hyperref}

\numberwithin{equation}{section}
\numberwithin{figure}{section}

\newtheorem{theorem}{Theorem}[section]
\newtheorem{remark}[theorem]{Remark}
\newtheorem{lemma}[theorem]{Lemma}
\newtheorem{proposition}[theorem]{Proposition}

\newtheorem{definition}[theorem]{Definition}

\let\C\relax

\newcommand{\C}{\mathbf{C}}
\newcommand{\D}{\mathbf{D}}
\newcommand{\E}{\mathbf{E}}

\newcommand{\h}{\mathbf{H}}
\newcommand{\N}{\mathbf{N}}
\newcommand{\Z}{\mathbf{Z}}
\newcommand{\p}{\mathbf{P}}

\newcommand{\R}{\mathbf{R}}

\newcommand{\CE}{\mathcal {E}}
\newcommand{\CF}{\mathcal {F}}
\newcommand{\CI}{\mathcal {I}}
\newcommand{\CJ}{\mathcal {J}}

\newcommand{\CL}{\mathcal {L}}

\newcommand{\CS}{\mathcal {S}}

\newcommand{\SLE}{{\rm SLE}}
\newcommand{\CLE}{{\rm CLE}}

\newcommand{\dist}{\mathrm{dist}}

\newcommand{\one}{{\bf 1}}

\newcommand{\wt}{\widetilde}

\newcommand{\ol}{\overline}
\newcommand{\ul}{\underline}

\newcommand{\quant}[3][]{{{\mathfrak q}}_{#3}\if\relax\detokenize{#1}\relax\else^{#1}\fi(#2)}
\newcommand{\median}[2][]{{\mathfrak m}_{#2}\if\relax\detokenize{#1}\relax\else^{#1}\fi}
\newcommand{\mediant}[2][]{\wt{\mathfrak m}_{#2}\if\relax\detokenize{#1}\relax\else^{#1}\fi}

\newcommand{\Fd}{\mathfrak d}
\newcommand{\met}[3]{\Fd(#1,#2;#3)}

\newcommand{\metres}[4]{\Fd^{#1}(#2,#3;#4)}

\newcommand{\geoexp}{{\alpha_{\mathrm{g}}}}
\newcommand{\resexp}{{\alpha_{\mathrm{r}}}}

\newcommand{\FR}{{\mathfrak R}}

\newcommand{\rmet}[3]{\FR(#1,#2;#3)}

\newcommand{\rmetres}[4]{\FR^{#1}(#2,#3;#4)}

\newcommand{\meas}[2]{\mu(#1; #2)}

\newcommand*{\graph}{\mathfrak{G}}
\newcommand*{\vertexset}{\mathfrak{V}}
\newcommand*{\edgeset}{\mathfrak{E}}

\newcommand*{\approxgraph}[2][]{\graph_{#2}\if\relax\detokenize{#1}\relax\else^{#1}\fi}
\newcommand*{\approxvtcs}[2][]{\vertexset_{#2}\if\relax\detokenize{#1}\relax\else^{#1}\fi}
\newcommand*{\approxedges}[2][]{\edgeset_{#2}\if\relax\detokenize{#1}\relax\else^{#1}\fi}

\DeclarePairedDelimiter\abs{\lvert}{\rvert}
\DeclarePairedDelimiter\norm{\lVert}{\rVert}

\newcommand*{\eqdef}{=\mathrel{\mathop:}}

\newcommand*{\mmiddle}[1]{\mathrel{}\middle#1\mathrel{}}

\newcommand*{\sle}[1]{$\SLE_{#1}$}
\newcommand*{\slek}{\sle{\kappa}}

\newcommand*{\cle}[1]{$\CLE_{#1}$}
\newcommand*{\clek}{\cle{\kappa}}

\newcommand{\markeddomain}[1]{{\mathfrak D}_{#1}}
\newcommand{\eldomain}[1]{{\mathfrak D}_{#1}^{\mathrm{ext}}}
\newcommand{\ildomain}[1]{{\mathfrak D}_{#1}^{\mathrm{int}}}

\newcommand{\mcclelaw}[1]{\p_{(#1)}^{\CLE_{\kappa}}}
\newcommand{\mccleexp}[1]{\E_{(#1)}^{\CLE_{\kappa}}}

\newcommand{\domainpair}[1]{{\mathfrak {P}}_{#1}}

\newcommand{\outside}{{\mathrm{out}}}
\newcommand{\inside}{{\mathrm{in}}}

\newcommand{\separated}{{\mathrm{sep}}}
\newcommand{\resampled}{{\mathrm{res}}}

\newcommand*{\metregions}[1][]{\mathfrak{C}\if\relax\detokenize{#1}\relax\else_{#1}\fi}

\newcommand*{\diamE}{\operatorname{diam_E}}
\newcommand*{\BE}{B_{\mathrm{E}}}
\newcommand*{\dpathY}[1][]{d_{\mathrm{path}}\if\relax\detokenize{#1}\relax\else^{#1}\fi}
\newcommand{\Bpath}{B_{\mathrm{path}}}

\newcommand*{\len}[2]{L_{#1}(#2)}

\newcommand*{\paths}[4]{P(#1,#2;#3;#4)}

\newcommand{\lmet}[1]{L_{\Fd}(#1)}

\newcommand{\Bgeo}{B_{\mathrm{geo}}}
\newcommand{\Bres}{B_{\mathrm{res}}}

\newcommand{\dcle}{d_\CLE}

\newcommand{\measapprox}[3]{\mu_{#1}(#2; #3)}
\newcommand{\leb}{{\mathrm {Leb}}}
\newcommand{\cen}{{\mathrm {cen}}}

\newcommand{\bad}{{\mathrm{bad}}}

\newcommand*{\Esep}{E^{\mathrm{sep}}}
\newcommand*{\dsep}{s_{\mathrm{sep}}}

\title[CLE Minkowski content]{Minkowski content construction of\\ the CLE gasket measure}

\usepackage[foot]{amsaddr}
\author{Jason Miller}
\author{Yizheng Yuan}
\address{Department of Pure Mathematics and Mathematical Statistics, University of Cambridge}
\address{Faculty of Mathematics, University of Vienna}

\date{\today}

\begin{document}

\begin{abstract}
We show for $\kappa \in (4,8)$ that the canonical conformally covariant measure on the conformal loop ensemble (CLE$_\kappa$) gasket, previously constructed indirectly by the first co-author and Schoug, can be realized as the limit of several natural approximation schemes.  These include the Euclidean Minkowski content and its box-count variants, the properly renormalized number of dyadic squares that intersect the gasket, and the properly renormalized minimal number of balls of radius $\delta$ necessary to cover the gasket with respect to both its canonical geodesic and resistance metrics.  This in particular allows us to identify the CLE$_6$ gasket measure with the conformally covariant measure constructed by Garban-Pete-Schramm as a scaling limit of the number of vertices in a macroscopic critical percolation cluster on the triangular lattice.  Along the way, we show that the CLE gasket measure of every fixed compact set has finite moments of all orders; previously this was only known for first moments.
\end{abstract}

\maketitle

\parindent 0 pt
\setlength{\parskip}{0.20cm plus1mm minus1mm}

\section{Introduction}
\label{sec:introduction}

\subsection{Overview}
\label{subsec:overview}

In this work, we will consider the \emph{conformal loop ensembles} ($\CLE_\kappa$) \cite{s2009cle, sw2012cle}, which are the loop version of $\SLE_\kappa$ \cite{s2000sle}. They arise as the joint scaling limit of all of the interfaces of various critical discrete models from two-dimensional statistical mechanics.  For example, the cases $\kappa=3,16/3,6,8$ were respectively shown to be the scaling limit of the critical Ising model \cite{bh2019ising}, FK Ising model \cite{ks2019fkising}, percolation model \cite{s2001percolation,cn2006percolationcle6}, and the uniform spanning tree \cite{lsw2004lerw}. For each value of the parameter $\kappa \in [8/3,8]$, a $\CLE_\kappa$ is a random collection of loops that live in a simply connected domain in $\C$.  When $\kappa \in (8/3,8)$, a $\CLE_\kappa$ consists of countably infinitely many loops, each of which looks locally like an $\SLE_\kappa$. In the case $\kappa \in (8/3,4]$ the loops are simple, do not intersect each other, or the domain boundary, while for $\kappa \in (4,8)$ the loops are self-intersecting (but not space-filling), intersect each other, and the domain boundary. The borderline cases $\kappa=8/3$ and $\kappa=8$ are respectively an empty set and a single space-filling loop.

In the non-nested $\CLE_\kappa$, there are no further loops inside each of its loops, whereas the nested $\CLE_\kappa$ is obtained by iteratively sampling a conditionally independent non-nested $\CLE_\kappa$ in each of the components inside the loops of a non-nested $\CLE_\kappa$. Conversely, a non-nested $\CLE_\kappa$ is obtained by considering the collection of outermost loops of a nested $\CLE_\kappa$.

Suppose that $\Gamma$ is a non-nested $\CLE_\kappa$. The carpet (resp.\ gasket) $\Upsilon$ of $\Gamma$ if $\kappa \in (8/3,4]$ (resp.\ $\kappa \in (4,8)$) is the closure of the set of points not inside any of its loops.  (The reason for this distinction in terminology is that for $\kappa \in (8/3,4]$ it is a random analog of the Sierpinski carpet while for $\kappa \in (4,8)$ it is a random analog of the Sierpinski gasket.)  The (Euclidean) dimension of the $\CLE_\kappa$ carpet/gasket is given by \cite{ssw2009radii,msw2014dimension}
\begin{equation}
\label{eqn:cle_dim}
\dcle = 2 - \frac{(8-\kappa)(3\kappa-8)}{32\kappa} = 1 + \frac{2}{\kappa} + \frac{3\kappa}{32} .
\end{equation}
Similarly, if $\Gamma$ is a nested $\CLE_\kappa$ and $\CL \in \Gamma$ is one of its loops, then one can define the carpet/gasket~$\Upsilon_\CL$ inside $\CL$ as the carpet/gasket of the non-nested $\CLE_\kappa$ in the next level inside $\CL$.

In \cite{ms2022clemeasure}, it was shown that there exists a unique (up to a deterministic constant) measure $\meas{\cdot}{\Gamma}$ on $\Upsilon$ satisfying the following properties.
\begin{itemize}
 \item $\E[\meas{K}{\Gamma}] < \infty$ for each compact set $K \subseteq D$.
 \item Let $U \subseteq D$ be open, simply connected, and let $U^* \subseteq U$ be the set of points that are not on or inside a loop $\CL \in \Gamma$ with $\CL \cap D\setminus U \neq \emptyset$. For each connected component $V$ of $U^*$, let $\Gamma_V \subseteq \Gamma$ be the loops contained in $\ol{V}$. Then the measure $\meas{\cdot \cap V}{\Gamma}$ is determined by $\Gamma_V$ and almost surely
 \[ \meas{\varphi(A)}{\Gamma_V} = \int_A \abs{\varphi'(z)}^{\dcle} \meas{dz}{\varphi^{-1}(\Gamma_V)} \]
 where $\varphi\colon D \to V$ is a conformal transformation.
\end{itemize}

The construction of the measure given in \cite{ms2022clemeasure} is indirect and is based on the construction of the corresponding measure in the setting of Liouville quantum gravity \cite{ms2021nonsimplelqg,ms2022simplelqg}, which in turn relies on the theory of growth-fragmentation processes.  The purpose of this work is to show that the measure of \cite{ms2022clemeasure} admits several direct constructions, including by Minkowski content.  This parallels the development of the so-called natural parameterization of $\SLE_\kappa$, which we recall was first constructed in \cite{ls2011natural} for a small range of $\kappa$ values using an indirect method.  The indirect construction was then extended to all $\kappa$ values in \cite{lz2013natural}.  It was later shown that the natural parameterization corresponds to Minkowski content in \cite{lr2015natural}.

In the particular case that $\kappa=6$, a conformally covariant measure defined on $\CLE_6$ was constructed in \cite{gps-measure} as the scaling limit of the counting measure on the critical percolation converging to~$\CLE_6$.  It is not immediate, however, from the construction given in \cite{ms2022clemeasure} that the measure from \cite{ms2022clemeasure} agrees with (a variant of) the measure from \cite{gps-measure}.  This will be one consequence of the present work.  This, in turn, will play an important role in showing that the scaling limit of random walk on two-dimensional percolation clusters converges in the scaling limit \cite{dmmy2026percolation} to the canonical Brownian motion on the $\CLE_6$ gasket \cite{amy2025tightness,my2025resuniqueness}.

A similar construction of a measure on the \cle{16/3} gaskets in the full plane was carried out in \cite{cck-measure-fkising} starting from the scaling limit of the FK-Ising model, but only scale-covariance of their measure was proved. Our main result also implies that their measure agrees with the canonical conformally covariant gasket measure on \cle{16/3}.

Finally, we remark that the analogous construction of the \cle{4} carpet measure was recently proved in \cite{alhl-ising-cle4} as a scaling limit of another discrete model. In the context of Liouville quantum gravity, a construction of the volume measure from the intrinsic metric (as in our Theorem~\ref{th:minkowski}\eqref{it:geo_count}) was proved in \cite{gs-lqg-minkowski}.

\subsection{Main results}
\label{subsec:main_results}

We now state our main results.  Let $D \subseteq \C$ be a simply connected domain. For each $k \in \N$ we let $\CS_k$ be the set of squares with corners in $(2^{-k} \Z)^2$ and side length $2^{-k}$ \emph{that have positive distance to $\C \setminus D$}. We let $\CS = \bigcup_{k \in \N} \CS_k$. For each square $Q$ and $\lambda > 0$, we denote by $\lambda Q$ the square with the same center as $Q$ and $\lambda$ times its side length.

Fix $\kappa \in (4,8)$, let $\Gamma$ be a nested \clek{} in $D$, and let $\Upsilon_{\partial D}$ be its outermost gasket which has $\partial D$ as its exterior boundary. For each loop $\CL \in \Gamma$ we let $\Upsilon_\CL$ be the interior gasket whose exterior boundary is~$\CL$. That is, $\Upsilon_\CL$ is set of points on or inside\footnote{Here, we say that a point $z$ is inside a loop $\CL$ if the winding number of $\CL$ about $z$ is $\pm 1$ (depending on the choice of its orientation).} $\CL$ and not inside another loop of $\Gamma$ inside $\CL$.  We note that we can define a measure $\meas{\cdot}{\Upsilon_\CL}$ on each interior gasket $\Upsilon_\CL$ by summing the measure from \cite{ms2022clemeasure} over the components inside $\CL$. (This notation is justified: Theorem~\ref{th:minkowski} below implies that the measure depends only on the gasket as a set in the Euclidean plane.)

Let $\Upsilon$ be a gasket of $\Gamma$. For each $B \in \CS$ and either $k \in \N$ or $\delta \in (0,1]$ we consider the following approximations of the \clek{} gasket measure.
\begin{enumerate}[(i)]
 \item\label{it:box_count} The box count
 \[ \measapprox{k}{B}{\Upsilon} = 2^{-\dcle k} \sum_{Q \in \CS_k ,\, Q \subseteq B} \one_{Q \cap \Upsilon \neq \varnothing} . \]
 \item\label{it:gps_box_count} The variant of the box count from \cite{gps-measure}
 \[ \measapprox{k}{B}{\Upsilon} = 2^{-\dcle k} \sum_{Q \in \CS_k ,\, Q \subseteq B} \one_{2Q \cap \Upsilon \neq \varnothing} . \]
 \item\label{it:minkowski} The (Euclidean) Minkowski content
 \[ \measapprox{\delta}{B}{\Upsilon} = \delta^{\dcle-2} \,\leb{\left( \bigcup_{x\in B \cap \Upsilon} \BE(x,\delta) \right)} \]
 where $\BE(x,\delta) = \{y \in \C : \norm{y-x}_2 < \delta\}$ denotes the Euclidean ball.
\end{enumerate}
The approximation schemes considered in~\eqref{it:box_count}--\eqref{it:minkowski} are based on \emph{Euclidean} approximations.  The following two approximation schemes are based on quantities which are \emph{intrinsic} to the $\CLE_\kappa$ gasket, in particular its canonical \emph{geodesic} \cite{my2025geouniqueness} and \emph{resistance} \cite{my2025resuniqueness} metrics.  (We recall the basics of these metrics in Section~\ref{sec:preliminaries}. We remark that the construction of the resistance metric in \cite{my2025resuniqueness} uses only the Propositions~\ref{pr:measure_ub} and~\ref{pr:measure_lb} of the present paper.) In the following, we let $\Bgeo(x,r)$ (resp.\ $\Bres(x,r)$) denote a ball with respect to the geodesic (resp.\ resistance) metric, and the gasket $\Upsilon$ is regarded as a metric space embedded in the plane.
\begin{enumerate}[(i)]
\setcounter{enumi}{3}
 \item\label{it:geo_count} The geodesic metric ball count
 \[ \measapprox{\delta}{B}{\Upsilon} = \delta^{\dcle} \min\left\{ n \in \N : \text{There exist $x_1,\ldots,x_n \in \Upsilon$ such that $B \cap \Upsilon \subseteq \bigcup_i \Bgeo(x_i,\delta^\geoexp)$} \right\} \]
 where $\geoexp$ is the conformal covariance exponent for the geodesic metric.
 \medskip
 \item\label{it:res_count} The resistance metric ball count
 \[ \measapprox{\delta}{B}{\Upsilon} = \delta^{\dcle} \min\left\{ n \in \N : \text{There exist $x_1,\ldots,x_n \in \Upsilon$ such that $B \cap \Upsilon \subseteq \bigcup_i \Bres(x_i,\delta^\resexp)$} \right\} \]
 where $\resexp$ is the conformal covariance exponent for the resistance metric.
\end{enumerate}
In the remainder of the article, we will write $\measapprox{k}{B}{\Upsilon} = \measapprox{2^{-k}}{B}{\Upsilon}$ also in the cases~\eqref{it:minkowski}--\eqref{it:res_count} where it is defined for all real $k > 0$. In the statements, when writing $k>0$, we implicitly restrict to $k \in \N$ in the cases~\eqref{it:box_count}--\eqref{it:gps_box_count}. (One could define these variants also for real $k>0$ although it is a bit unnatural.)

Note that all the approximate measures~\eqref{it:box_count}--\eqref{it:res_count} satisfy the scaling
\begin{equation}\label{eq:approxmeas_scaling}
 \measapprox{j+k}{2^{-j}B}{2^{-j}\Upsilon} = 2^{-j\dcle}\measapprox{k}{B}{\Upsilon} .
\end{equation}

\begin{remark}
 In fact, the only key properties of the approximate measures that we need are the scaling~\eqref{eq:approxmeas_scaling}, the approximate additivity (Lemma~\ref{le:approx_additivity}), and finite moments (Lemma~\ref{le:approxmeas_moments}). One can phrase our main result Theorem~\ref{th:minkowski} under an abstract set of conditions for $\measapprox{k}{\cdot}{\Upsilon}$, but for the sake of simplicity and readability we choose to be explicit.
\end{remark}

\begin{theorem}\label{th:minkowski}
 Let $\Gamma$ be a nested \clek{} in $D$. For each of its gaskets $\Upsilon$, let $\meas{\cdot}{\Upsilon}$ be the gasket measure from \cite{ms2022clemeasure}. Let $\measapprox{k}{\cdot}{\Upsilon}$ be one of the approximate measures~\eqref{it:box_count}--\eqref{it:res_count} defined above. There exists a deterministic constant $c>0$ (depending only on $\kappa$ and on the approximation scheme) such that for each of the gaskets of $\Gamma$ we have
 \begin{equation}\label{eq:measure_convergence}
  \lim_{k \to \infty} \measapprox{k}{B}{\Upsilon} = c\meas{B}{\Upsilon}
  \quad\text{almost surely for each } B \in \CS .
 \end{equation}
\end{theorem}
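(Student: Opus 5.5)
We follow the standard strategy for Minkowski-content results: first prove that along subsequences the approximate measures converge to \emph{some} measure, then identify every subsequential limit with a deterministic multiple of $\meas{\cdot}{\Upsilon}$ using the uniqueness characterization from \cite{ms2022clemeasure}, and finally upgrade to almost sure convergence of the whole sequence by a second-moment/ergodic argument.

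\textbf{Step 1 (tightness and moments).} Using the finite moments of the approximate measures (Lemma~\ref{le:approxmeas_moments}) and the moment bounds on $\meas{\cdot}{\Upsilon}$ from Propositions~\ref{pr:measure_ub} and~\ref{pr:measure_lb}, we get that $(\measapprox{k}{B}{\Upsilon})_k$ is bounded in every $L^p$. Approximate additivity (Lemma~\ref{le:approx_additivity}) shows that any subsequential limit in law of the family $(\measapprox{k}{B}{\Upsilon})_{B\in\CS}$, jointly with $\Gamma$, defines a finitely additive, hence (by uniform integrability and the absence of atoms coming from the moment bounds on small boxes) countably additive, measure supported on $\Upsilon$.

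\textbf{Step 2 (identification).} We check that a subsequential limit $\nu$ satisfies the axioms of \cite{ms2022clemeasure}. Locality holds because $\measapprox{k}{\cdot\cap V}{\Upsilon}$ is, up to boundary squares, a function of the loops in $\ol V$ (for the intrinsic metrics we use that the geodesic/resistance metric restricted to deep subsets of $V$ is determined by $\Gamma_V$). Conformal covariance comes from the scaling~\eqref{eq:approxmeas_scaling} together with the fact that a conformal map is locally an affine map at small scales: we cover $A$ by small boxes on which $\varphi$ is close to $z\mapsto \varphi(z_0)+\varphi'(z_0)(z-z_0)$, and then the covariance follows from scaling plus rotation invariance of the limit law. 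Rotations are the delicate point for box counts, so we average: the scaling relation only gives $2^{j}$-dilations, and we obtain general dilations and rotations by arguing that the limit does not depend on the grid orientation, via the intrinsic characterization (both rotated and unrotated limits satisfy locality and exact dyadic covariance, and the uniqueness argument of \cite{ms2022clemeasure} only needs covariance for a generating family). Uniqueness then gives $\nu = c\,\meas{\cdot}{\Upsilon}$ with $c$ possibly random a priori. We show that $c$ is deterministic by a zero-one/independence argument: $c$ can be read off from arbitrarily small boxes, which are asymptotically independent.

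\textbf{Step 3 (almost sure convergence).} To avoid convergence only in law, we prove an $L^2$ estimate directly: decompose $B$ into $2^{-m}$-boxes $Q$, write $\measapprox{k}{B}{\Upsilon}-c\meas{B}{\Upsilon}\approx\sum_Q(\measapprox{k}{Q}{\Upsilon}-c\meas{Q}{\Upsilon})$. The conditional expectation of each summand given the CLE outside a neighbourhood of $Q$ is small by Step 2 and scaling, and the summands are approximately uncorrelated by the Markov property of CLE, giving variance $O(2^{-\epsilon k})$. Borel--Cantelli along $k\in\N$ gives almost sure convergence, and for real $k$ (cases \eqref{it:minkowski}--\eqref{it:res_count}) we interpolate using monotonicity of the approximations in $\delta$ between $2^{-k}$ and $2^{-k-1/n}$ together with the continuity of $c$ in the scaling. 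The main obstacle is this decorrelation estimate: CLE$_\kappa$ for $\kappa\in(4,8)$ has loops touching across boxes, so one needs a resampling/separation argument showing that, off an event of polynomially small probability, the gasket inside distinct boxes is conditionally independent given macroscopic loop configurations, with error controlled by the higher moments from Step 1.
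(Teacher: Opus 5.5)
Your Step~2 is where the argument breaks. The characterization of \cite{ms2022clemeasure} applies to measures that are determined by $\Gamma$ (with $\meas{\cdot\cap V}{\Gamma}$ determined by $\Gamma_V$) and that are conformally covariant under general conformal maps. A subsequential limit in law $\nu$ of $(\Gamma,\measapprox{k}{\cdot}{\Upsilon})$ is not known to have either property.

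\emph{Measurability is not preserved under weak limits.} That each $\measapprox{k}{\cdot\cap V}{\Upsilon}$ is a function of the loops near $\ol V$ does not prevent $\nu$ from carrying extra randomness. At best a uniqueness theorem could identify $\E[\nu\mid\Gamma]$, not $\nu$ itself.

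\emph{Covariance is also out of reach.} The approximations are exactly covariant only under dyadic dilations, and for (i)--(ii) only under translations preserving the grid. A dilation by $\lambda\notin 2^{\Z}$, let alone a conformal map with non-constant $|\varphi'|$, relates the limit along $(k_\ell)$ to limits along shifted sequences $(k_\ell-\log_2\lambda)$. These are \emph{other} subsequential limits. So covariance of a fixed subsequential limit is essentially equivalent to the normalization being independent of the subsequence, which is part of what you must prove. Your remark that uniqueness only needs covariance for a generating family is unsupported: dyadic dilations do not generate the maps $\varphi\colon D\to V$ that enter the Markov property.

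The same issue resurfaces in Step~3. Even granting that every subsequential limit is a multiple of $\meas{\cdot}{\Upsilon}$, the multiple may depend on the subsequence, since the correct normalization could drift slowly in $k$. You nonetheless work with a single $c$. Moreover, your input that each summand has small conditional mean given the outside configuration needs convergence uniformly over the multichordal boundary data, and Step~2 would not provide this.

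The paper never invokes the uniqueness theorem; it keeps $\meas{\cdot}{\Upsilon}$ in the coupling throughout.
\begin{enumerate}[(a)]
\item It proves two-sided comparability (Lemma~\ref{le:comparability_on_squares}) via conditional large-deviation bounds given $\CF_{j,i,\alpha}$.
\item For a joint subsequential limit $(\Gamma,\meas{\cdot}{\Upsilon},\wt\mu)$, it shows that the optimal constants $c_*\le c^*$ with $c_*\meas{\cdot}{\Upsilon}\le\wt\mu\le c^*\meas{\cdot}{\Upsilon}$ must coincide. This is a multiscale bootstrap that uses only dyadic scaling, Lemma~\ref{le:mcle_abs_cont}, and conditional independence of the squares in $\CS_{j,i}$.
\item Identifying $\wt\mu=c\,\meas{\cdot}{\Upsilon}$ inside the coupling gives $\Gamma$-measurability and convergence in probability, with normalizing constants $c_k$, for free (Lemma~\ref{le:in_prob}).
\item Almost sure convergence comes from superpolynomial tail bounds obtained by counting good scales, not from a variance estimate (Lemma~\ref{le:nonconstant_as}).
\item Convergence of $(c_k)$ is a separate argument (Lemma~\ref{le:constant_as}). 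It compares first moments across scales via (sub)additivity to get $c_{j+k}\ge c_k+o(1)$.
\end{enumerate}
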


Note that the measure $\meas{\cdot}{\Upsilon}$ is determined by its values on $\CS$, so Theorem~\ref{th:minkowski} states how the measure $\meas{\cdot}{\Upsilon}$ is explicitly obtained from the approximations $(\measapprox{k}{\cdot}{\Upsilon})$. Moreover, it follows that~\eqref{eq:measure_convergence} holds for each Borel set $B \Subset D$ with $\meas{\partial B}{\Upsilon} = 0$. In particular, by Proposition~\ref{pr:measure_ub} below, this holds for each $B$ such that $\partial B \cap \Upsilon$ has Hausdorff dimension smaller than~$\dcle$. For the approximations~\eqref{it:box_count}--\eqref{it:gps_box_count}, each $\measapprox{k}{\cdot}{\Upsilon}$ defines exactly a measure, so in that case Theorem~\ref{th:minkowski} implies that $\measapprox{k}{\cdot}{\Upsilon}$ converges vaguely to $\meas{\cdot}{\Upsilon}$ as $k \to \infty$.

\begin{remark}
A similar characterization has been proved for a variant of the \cle{6} gasket measure in \cite{gps-measure} where the renormalization factor is implicitly given in terms of the \cle{6} one-arm probability $\alpha_1(2^{-k},1)$. Our result, in contrast, gives the exact renormalization factor which is just a power function. We will see that we can obtain this because we know that the gasket measure satisfies the exact scaling property. As a consequence, comparing this to the result in \cite{gps-measure}, we will obtain that $\alpha_1(2^{-k},1) = c2^{-(5/48)k}+o(2^{-(5/48)k})$ for some constant $c>0$. See \cite{dmmy2026percolation} for details.
\end{remark}

Along the way, we will prove the following regularity properties of the \clek{} measure which are of independent interest.

\begin{proposition}\label{pr:measure_ub}
Let $\Gamma$ be a nested \clek{} in a simply connected domain $D \subseteq \C$. For each of the gaskets $\Upsilon$ of $\Gamma$, let $\meas{\cdot}{\Upsilon}$ denote the gasket measure on $\Upsilon$. Fix any $\ell \in \N$, $a>0$, and a compact set $K \subseteq D$. For each $r>0$, let $E_r$ be the event that
\[ r^{\dcle+a} \le \meas{\BE(z,r)}{\Upsilon} \le r^{\dcle-a} \quad\text{for each } z \in \Upsilon \cap K \]
and for each gasket $\Upsilon$ of $\Gamma$ of level at most $\ell$. Then
\[ \p[(E_r)^c] = o^\infty(r) \quad\text{as } r \searrow 0 . \]
\end{proposition}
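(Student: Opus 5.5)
We prove the two bounds separately. Each reduces, by a union bound over a grid of $O(r^{-2})$ centers at spacing $r^{1+a'}$, to a superpolynomial estimate for a single ball. The input will be the moment bounds on the gasket measure (to be proved along the way, using the Markov property and conformal covariance from \cite{ms2022clemeasure}) together with the exact scaling $\meas{r\cdot}{r\Upsilon} = r^{\dcle}\meas{\cdot}{\Upsilon}$. By conformal invariance and the Koebe distortion theorem, we may replace $D$ by the unit disk and $K$ by a fixed compact subset. The levels $\le \ell$ give only finitely many gaskets to consider with macroscopic probability; gaskets inside loops of diameter less than $r$ contribute nothing to the bound at points of $K$ at distance $\gg r$ from their loop. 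A tail bound on the number of loops of diameter $\ge r^{a'}$ handles the rest.

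\textbf{Upper bound.} We first show that for each fixed $z$ and $p \ge 1$,
\[ \E\bigl[\meas{\BE(z,r)}{\Upsilon}^p\bigr] \le C_p\, r^{p\dcle - o(1)}. \]
Chebyshev's inequality with large $p$ then gives $\p[\meas{\BE(z,r)}{\Upsilon} > r^{\dcle-a}] \le r^{pa - o(1)}$, which is $o^\infty(r)$. Applying this to balls of radius $2r$ on the grid covers every $z\in\Upsilon\cap K$ (every such $z$ lies in some $\BE(w,2r)$ with $w$ a grid point).

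For the moment bound, we explore the loops intersecting $\partial \BE(z,2r)$ and use the domain Markov property. The measure in $\BE(z,r)$ is then carried by the components $V$ of the unexplored region. In each $V$ the conditional law is a CLE, so by conformal covariance its mass is $\int |\varphi'|^{\dcle}\,d\mu$ for the uniformizing map $\varphi$. By Koebe, $|\varphi'| \lesssim r$ on the relevant region, up to factors controlled by harmonic measure. The $p$-th moment of the total is then bounded by induction on $p$: expand the $p$-th power, and use the first-moment bound $\E[\mu(K)]<\infty$ as the base case together with the quasi-independence of the CLE in disjoint annuli. The main obstacle is controlling the off-diagonal terms $\E[\mu(dx_1)\cdots\mu(dx_p)]$. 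The approach I would try first is a multiscale bound of the form $\E[\prod_i \mu(dx_i)] \lesssim \prod_i |x_i - x_{\sigma(i)}|^{\dcle-2-o(1)}\,dx_i$ along a tree structure. This would come from iterating the Markov property: conditionally on the CLE outside a small ball, the mass inside behaves like a rescaled copy, and gasket-arm events across disjoint annuli decorrelate with uniformly bounded constants.

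\textbf{Lower bound.} We use a multiscale independence argument. Consider the annuli $A_j = \BE(z, 2^{-j}r)\setminus \BE(z,2^{-j-1}r)$ for $j=0,\dots,N$ with $N \asymp a\log(1/r)$. On each scale, with probability $\ge q>0$ uniformly in the configuration outside (by CLE annulus-crossing estimates and the Markov property), a "good" event occurs. This event says that a piece of $\Upsilon$ connected to $z$ sits inside a region whose uniformizing map has derivative $\asymp 2^{-j}r$ there, and that this piece carries measure $\ge c(2^{-j}r)^{\dcle}$. The positive-probability statement here follows from scaling and $\mu(\text{fixed ball})>0$ a.s. on the corresponding event. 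The events on different scales are nearly independent, so
\[ \p[\text{none occurs in } j\le N] \le (1-q)^{N} = r^{c'a\log(1/(1-q))}. \]
Choosing $N = M\log(1/r)$ with $M$ large but still with $2^{-N}r \ge r^{1+a/(2\dcle)}$ is not possible for arbitrary $M$. Instead we use only scales $j \le (a/2\dcle)\log_2(1/r)$ and boost $q$ to $1-\epsilon$ by using several independent disjoint subannuli per scale, which makes the failure probability $\le r^{A}$ for any $A$. The resulting bound $\meas{\BE(z,r)}{\Upsilon}\ge c(2^{-N}r)^{\dcle}\ge r^{\dcle+a}$ then holds for every $z\in\Upsilon\cap K$ via the grid and a union bound. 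Points $z$ close to a grid point $w$ inherit the bound for $w$ at radius $r/2$, provided the good event is formulated for the point of $\Upsilon$ nearest to $w$.
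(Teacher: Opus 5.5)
The upper bound follows the paper's route: moment bounds from a tree-structured $n$-point estimate, then Chebyshev and a union bound over a grid. But the step you identify as the main obstacle rests on an assertion that is not available. Gasket-arm events in disjoint annuli do not decorrelate with uniformly bounded constants. Given the exterior exploration, what remains is a multichordal \clek{}, or with your full-loop exploration a \clek{} in a component $V$ whose boundary may come much closer to $z$ than $r$. The hitting and first-moment estimates hold uniformly only when the marked points are $\dsep$-separated (Lemmas~\ref{lem:gasket_measure_first_moment_mcle} and~\ref{lem:ssw_gasket_exponent_mcle}); otherwise Lemma~\ref{lem:ssw_gasket_exponent} only gives $(s/\dist(z,\partial V))^{2-\dcle-a}$. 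The paper deals with this via Lemma~\ref{le:separation_event}: off an event of probability $O(e^{-bj_0})$, every $z\in K$ has a separated scale in each window $[(1-a)j,j]$ with $j\ge j_0$. On this event $E_M$, the annulus covering of Lemma~\ref{lem:annulus_lemma} yields the density bound of Lemma~\ref{lem:measure_has_density}, losing only a factor $(\min_{i\neq i'}\abs{z_i-z_{i'}})^{-a}$. This is the real source of your $o(1)$. It is also why the moments are bounded only on $E_M$ with $\p[E_M^c]\le cM^{-b}$; taking $M=r^{-\epsilon}$ this still gives $o^\infty(r)$.

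The lower bound has a genuine gap. You rightly observe that a fixed per-scale probability $q$ over $N\asymp a\log(1/r)$ scales gives only a fixed power of $r$. However, your cure of several independent disjoint subannuli per scale is not a workable mechanism. Concentric subannuli are thinner, so with the same mass threshold $c(2^{-j}r)^{\dcle}$ their success probability decreases rather than increases. Off-centre subregions of $A_j$ need not be visited by the gasket at all. The missing idea is to drive the per-scale probability to $1$ by degrading a constant in the event. In the paper, the event at scale $j'$ has two parts. First, at most $M$ points $u_i\in A_{z,j'}$ separate $A_{z,j'}^\inside$ from $A_{z,j'}^\outside$ in the gasket. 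Second, the two strands through each $u_i$ meet again nearby, enclosing a region $V_i$ with $\meas{V_i}{\Upsilon}\ge M^{-1}2^{-j'\dcle}$. Since $\mu$ is a.s.\ positive (Lemma~\ref{le:meas_nonzero}), scaling and the uniform control on separated scales show this event has conditional probability tending to $1$ as $M\to\infty$. Together with Lemma~\ref{le:separation_event}, the $aj$ scales between $2^{-j}$ and $2^{-(1+a)j}$ then already give failure probability $O(e^{-bj})$ for any $b$.

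The same formulation fixes a second problem in your sketch. An event about the piece of $\Upsilon$ connected to a grid point, or to the point of $\Upsilon$ nearest $w$, is not determined by the configuration in an annulus, so independence across scales cannot be applied to it. The paper's event lives in the annulus of a dyadic square. Every admissible path from $x\in\Upsilon\cap Q_{x,j'}$ to distance $2^{-j}$ must cross some $V_i$ lying within $O(2^{-j'})$ of $x$, so a union bound over squares handles all $x$ simultaneously (Proposition~\ref{pr:measure_lb}).

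Finally, your remark on small loops does not rescue the lower bound for gaskets of level $\ge 2$. Inside a loop of diameter much smaller than $r$, the whole gasket has mass far below $r^{\dcle+a}$. So some restriction to gaskets reaching distance of order $r$ from $z$ is unavoidable; Proposition~\ref{pr:measure_lb} builds this in via $w\in\partial\Bpath(x,r)$.
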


We will also prove a stronger version of the lower bound which will be useful in \cite{my2025resuniqueness}. In order to give the statement, we first recall a definition. Let $\Gamma$ be a \clek{} in $D$, and let $\Upsilon$ be one of its gaskets. We say that a path $\gamma$ is \emph{admissible} for $\Gamma$ if it is contained in $D$ and does not cross any loop of $\Gamma$. We define the metric $\dpathY$ on $\Upsilon$ by\footnote{Strictly speaking, the metric $\dpathY$ is defined on the set of \emph{prime ends} of $\Upsilon$. See Section~\ref{subsec:cle_metrics} for more details.}
\begin{equation}\label{eq:dpath}
 \dpathY(x,y) = \inf\{ \diamE(\gamma) : \gamma \text{ admissible path between $x$ and $y$} \} .
\end{equation}
We let $\Bpath(x,r)$ denote a ball in the metric $\dpathY$ (which is the $\dpathY$-connected component of~$\Upsilon \cap \BE(x,r)$ containing $x$), and let $\partial\Bpath(x,r) \subseteq \partial\BE(x,r)$ be its boundary with respect to the metric~$\dpathY$.

\begin{proposition}\label{pr:measure_lb}
Let $\Gamma$ be a nested \clek{} in a simply connected domain $D \subseteq \C$. For each of the gaskets $\Upsilon$ of $\Gamma$, let $\meas{\cdot}{\Upsilon}$ denote the gasket measure on $\Upsilon$. Fix any $a>0$ and a compact set $K \subseteq D$. For each $r>0$, let $E_r$ be the event that the following holds. Let $\Upsilon$ be any of the gaskets of~$\Gamma$, let $x \in \Upsilon \cap K$ and $w \in \partial\Bpath(x,r)$. Let $B_{x,w} \subseteq \ol{\BE}(x,r)$ be a simply connected set containing all simple admissible paths within $\ol{\BE}(x,r)$ from $x$ to $w$. Then
\[ \meas{B_{x,w}}{\Upsilon} \ge r^{\dcle+a} \quad\text{for each such $x,w$, and $\Upsilon$.} \]
Then
\[ \p[(E_r)^c] = o^\infty(r) \quad\text{as } r \searrow 0 . \]
\end{proposition}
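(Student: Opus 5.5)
The plan is to reduce the statement to a single-scale estimate and then take a union bound over a polynomial number of configurations. Fix $a>0$ and $K$. Cover $K$ by the grid points $z \in (r^{1+a'}\Z)^2 \cap K'$, where $a'$ is small compared with $a$ and $K'$ is a slightly larger compact set. There are $O(r^{-2-2a'})$ such points. For a point $x \in \Upsilon \cap K$ and a point $w \in \partial\Bpath(x,r)$, every admissible path from $x$ to $w$ inside $\ol{\BE}(x,r)$ must cross the annulus $A(x;r/4,r/2)$. The goal is therefore to show that the admissible crossings of a suitable annulus, say $A(z;r/4,r/2)$ around a nearby grid point $z$, necessarily pass through a region of the gasket carrying measure at least $r^{\dcle+a}$ that lies inside any simply connected set $B_{x,w}$ containing all simple admissible paths from $x$ to $w$. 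Since the number of grid points is polynomial in $r^{-1}$, it suffices to prove that for a fixed $z$ the failure probability is $o^\infty(r)$, uniformly in $z \in K'$. Using the scaling and conformal covariance of the measure, together with the locality of CLE (the Markov property for the exploration of loops hitting $\partial\BE(z,r)$), this reduces to a statement at unit scale with a polynomially small exceptional probability that must be made superpolynomially small.

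The key step is the following \emph{many independent chances} argument. Inside the annulus $A(z;r/4,r/2)$, consider $N \asymp \log(1/r)$ or $r^{-\epsilon}$ disjoint concentric subannuli, or alternatively a sequence of nested scales. In each subannulus one explores the CLE from outside in, in the manner used for the separation and resampling estimates earlier in the paper. Conditionally on the past, there is probability at least $p>0$ that a \emph{good} configuration occurs. In a good configuration, the gasket contains a pinch structure: a region $U$ such that every admissible path crossing the subannulus must pass through the closure of a small gasket piece $G$, which has diameter comparable to the scale and carries measure $\ge c\,(\text{scale})^{\dcle}$. The point of the pinch is that $B_{x,w}$, being simply connected and containing all simple admissible paths from $x$ to $w$, must contain $G$. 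One cannot avoid $G$ with a set that separates or goes around it, because admissible paths in the gasket are forced through it.

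The positive measure of $G$ with uniformly positive probability follows from $\meas{\cdot}{\Upsilon}$ being nontrivial together with its conformal covariance. Independence across subannuli, up to the Markov property, gives failure probability at most $(1-p)^N$. If we choose the subannuli at scales $r^{1+a/2}$ within $\BE(x,r)$, then $N \asymp r^{-c}$ and the failure probability becomes $o^\infty(r)$. The measure lower bound $(r^{1+a/2})^{\dcle} \cdot c \ge r^{\dcle+a}$ then holds for small $r$.

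I expect the main obstacle to be making the \emph{pinch} event rigorous. The issue is that, for $\kappa \in (4,8)$, loops touch each other and the gasket is only finitely ramified in a weak sense. One needs a good event, measurable with respect to a local exploration, which guarantees that all admissible crossings pass through a designated gasket piece of definite measure, together with a uniform positive conditional probability for this event given the outside configuration. My first attempt would be to use the boundary-touching structure of CLE loops: require two large loops to come within distance $\ll$ scale of each other at two points, squeezing the gasket into a thin corridor, while a definite fraction of the measure lies inside the corridor. The uniform conditional positivity would come from the resampling and separation lemmas for CLE boundary data referenced in the paper.

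A final minor step handles the choice of $w$ on $\partial\Bpath(x,r)$ and the prime-end subtleties. This is done by working with crossings of the annulus, which every relevant path performs regardless of where $w$ lies.
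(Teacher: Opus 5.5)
\textbf{There is a genuine gap in the step that is meant to give superpolynomial decay.} The polynomially many independent chances you invoke do not exist. For a pinch in an annulus to constrain every path from $x$ to $w$, the annulus must separate $x$ from $\partial\BE(x,r)$, so it must be essentially centred at $x$. For a uniform lower bound $p$ on the conditional success probability, it must also have bounded aspect ratio.

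Disjoint annuli of this kind with scale $s \ge r^{1+a/\dcle}$ number only $O(a\log(1/r))$. Below that scale the bound $c\,s^{\dcle}\ge r^{\dcle+a}$ fails. The other options you mention do not help either:
\begin{itemize}
\item Polynomially many thin concentric annuli of width $r^{1+a/2}$ admit no uniform $p$, since each would need the pinch structure all along a strip of length $\asymp r^{-a/2}$ in its own units.
\item Small annuli placed elsewhere in $\BE(x,r)$ need not be crossed at all.
\end{itemize}
With $N\asymp a\log(1/r)$ and a fixed $p$ you get $(1-p)^N = r^{c(p)a}$. That is a fixed power of $r$, not $o^\infty(r)$.

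What is missing is a per-scale good event whose conditional probability can be pushed arbitrarily close to $1$ by tuning a parameter. A single designated pinch piece $G$ through which all crossings pass cannot do this, because with probability bounded away from zero the gasket crosses the annulus along routes with no common cut point. Likewise, knowing only that $\meas{G}{\Upsilon}$ is positive with uniformly positive probability is too weak.

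\textbf{How the paper handles it.} At scale $2^{-j'}$ the good event allows up to $M$ points $u_i$ in $A_{z,j'}$ that separate its inner face from its outer face in every gasket. Each $u_i$ is a touching point of two strands which touch again at a nearby point $v_i$, enclosing a bubble $V_i$ with $\meas{V_i}{\Upsilon}\ge M^{-1}2^{-j'\dcle}$. Because the separating set is a.s.\ finite and the bubbles have a.s.\ positive mass (Lemma~\ref{le:meas_nonzero}, together with scaling), this event has conditional probability tending to $1$ as $M\to\infty$. The independence-across-scales estimate over the $\asymp aj$ scales $j<j'<(1+a)j$ then gives failure probability $O(e^{-bj})$ for any prescribed $b$. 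The only cost is the constant factor $M^{-1}$ in the measure bound.

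\textbf{Why the bubble lies in $B_{x,w}$.} The bubble structure also supplies the correct reason, which your proposal does not. That admissible paths are forced through $G$ only places a curve inside $B_{x,w}$. Instead, there are simple admissible paths from $x$ to $w$ running along each of the two strands bounding $V_i$. Simple connectivity of $B_{x,w}$ then forces it to contain the region $V_i$ between them.
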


\begin{figure}[ht]
\centering
\includegraphics[width=0.5\textwidth]{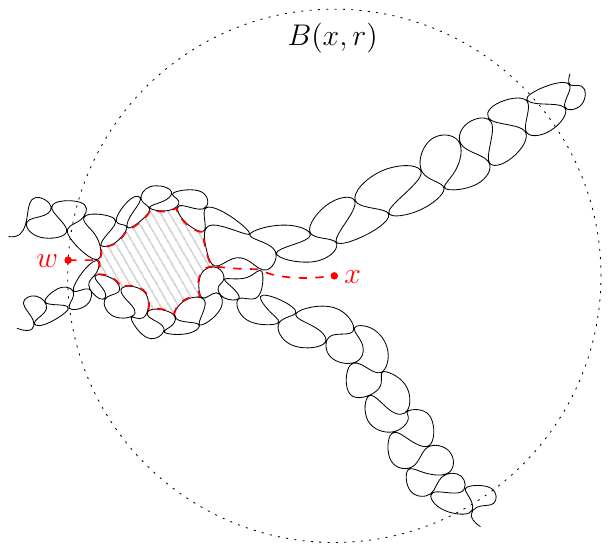}
\caption{Illustration of the statement of Proposition~\ref{pr:measure_lb}. Shown in red are two simple admissible paths between $x$ and $w \in \partial\Bpath(x,r)$ in a gasket $\Upsilon$ of $\Gamma$. The region $B_{x,w}$ is a simply connected set in the Euclidean plane and contains the two red paths, hence needs to contain the shaded region between them.}
\label{fi:region_containing_bubble}
\end{figure}

\subsection*{Outline}

The remainder of this article is structured as follows.  In Section~\ref{sec:preliminaries}, we will collect some preliminaries.  Next, in Section~\ref{sec:moment_finiteness} we will prove the finiteness of the moments of the $\CLE_\kappa$ gasket measure as well as Propositions~\ref{pr:measure_ub} and~\ref{pr:measure_lb}.  Finally, in Section~\ref{sec:minkowski_content} we will complete the proof of Theorem~\ref{th:minkowski}.

Let us now give some further ideas regarding the heart of the argument of the proof of Theorem~\ref{th:minkowski}.  Theorem~\ref{th:minkowski} can be seen as a strong law of large numbers for the \clek{} gasket measure. Indeed, suppose that $B \in \CS_l$ and $j > l$ so that we can divide $B$ into a collection of subsquares in $\CS_j$.  Then we have
\[
 \measapprox{k}{B}{\Upsilon} \approx \sum_{Q \in \CS_j ,\, \Upsilon \cap Q \neq \varnothing} \measapprox{k}{Q}{\Upsilon}.
\]
By the scaling property~\eqref{eq:approxmeas_scaling}, we have
\[
 \measapprox{k}{Q}{\Upsilon} \approx 2^{-j\dcle}\measapprox{k-j}{2^j Q}{2^j \Upsilon} .
\]
If we pretend the collection $(\measapprox{k}{Q}{\Upsilon})$ to be i.i.d., then we expect that
\[
 \measapprox{k}{B}{\Upsilon} \approx \#\{Q \in \CS_j ,\, \Upsilon \cap Q \neq \varnothing\} 2^{-j\dcle}\E[\measapprox{k-j}{B}{\Upsilon}] .
\]
Similarly, we expect that
\[
 \meas{B}{\Upsilon} \approx \#\{Q \in \CS_j ,\, \Upsilon \cap Q \neq \varnothing\} 2^{-j\dcle}\E[\meas{B}{\Upsilon}] .
\]
Therefore the constant $c>0$ in Theorem~\ref{th:minkowski} represents the ratio $\E[\measapprox{k}{B}{\Upsilon}]/\E[\meas{B}{\Upsilon}]$ in an appropriate setup.

The main challenge in proving it is the lack of exact independence. Our main tool is \cite{amy-cle-resampling} which establishes sufficient spatial independence. This will enough to prove in Section~\ref{se:meas_comparability} that $\measapprox{k}{B}{\Upsilon}$ and $\meas{B}{\Upsilon}$ are comparable up to some deterministic constants $c_* \le c^*$. In Section~\ref{se:meas_limit}, we then use the self-similarity to argue that (at least along subsequences) the constants $c_*,c^*$ can be improved until they agree. Up to this step, this strategy is reminiscent of the proof of the main result of \cite{my2025geouniqueness}, however it only gives us the convergence in probability, and up to some additional scaling factors~$(c_k)$ for~$\measapprox{k}{\cdot}{\Upsilon}$. We then need to improve this to almost sure convergence and obtain the exact scaling factors in the approximation $\measapprox{k}{\cdot}{\Upsilon}$. This is carried out in Section~\ref{se:convergence_improve}.

\subsection*{Acknowledgements} J.M. and Y.Y.\ were supported by ERC starting grant SPRS (804116) and from ERC consolidator grant ARPF (Horizon Europe UKRI G120614). Y.Y.\ in addition received support from the Royal Society.

\subsection*{Notation} Let $\delta \in (0,1]$. We let $O(\delta)$ denote a function that is bounded by $c\delta$ for some constant $c>0$. We let $o^\infty(\delta)$ denote a function such that for each $\beta>0$ there is a constant $c>0$ such that the function is bounded by $c\delta^\beta$. We write $a \lesssim b$ to denote that $a \le cb$ for some constant $c>0$, and we write $a \asymp b$ to denote that $a \lesssim b$ and $b \lesssim a$.

\section{Preliminaries}
\label{sec:preliminaries}

The purpose of this section is to collect some preliminaries.  First, we will review some basic facts about $\SLE$ in Section~\ref{subsec:sle}.  Next, we will review the construction and basic facts about $\CLE$ in Section~\ref{subsec:cle} and its multichordal variant. In Section~\ref{subsec:cle_metrics} we will review the basics of the $\CLE$ geodesic and resistance metrics (this is only relevant for parts~\eqref{it:geo_count} and~\eqref{it:res_count} of Theorem~\ref{th:minkowski}).  Finally, in Section~\ref{subsec:independence_across_scales} we review results about independence across scales for $\CLE$.

\subsection{Schramm-Loewner evolution}
\label{subsec:sle}

The starting point for the definition of $\SLE$ is the chordal Loewner equation
\begin{equation}
\label{eqn:loewner_ode}
\partial_t g_t(z) = \frac{2}{g_t(z) - W_t},\quad g_0(z) = z.
\end{equation}
Here, $W \colon \R_+ \to \R$ is a continuous function, and for each $z \in \h$ the solution $(g_t(z))$ is defined up to the time $\tau_z = \inf\{t \geq 0 : g_t(z) = W_t\}$.  If we let $\h_t = \{ z  \in \h : t < \tau_z\}$, then $g_t$ is the unique conformal map $\h_t \to \h$ with $g_t(z)  - z \to 0$ as $z \to \infty$.  If we let $K_t = \h \setminus \h_t$ then $K_t$ is a compact $\h$-hull meaning that $\ol{K_t}$ is compact and $\h \setminus K_t$ is simply connected.

The particular case where $W = \sqrt{\kappa} B$ and $B$ is a standard Brownian motion defines $\SLE_\kappa$ \cite{s2000sle}. More precisely, the $\SLE_\kappa$ in $\h$ from $0$ to $\infty$ is a continuous curve in $\ol{\h}$ that \emph{generates} the family $(K_t)$ of compact $\h$-hulls associated with $W = \sqrt{\kappa} B$, meaning that $\h \setminus K_t$ is the unbounded component of $\h \setminus \eta([0,t])$. That there is almost surely such a continuous curve was proved in \cite{rs2005basic} for $\kappa \neq 8$, and in \cite{lsw2004lerw} for $\kappa = 8$ as a consequence of the convergence of the uniform spanning tree to $\SLE_8$ (see also \cite{am2022sle8} for a proof of the continuity of $\SLE_8$ based only on continuum methods).

The $\SLE_\kappa(\rho)$ processes, first introduced in \cite[Section~8.3]{lsw2003restriction}, are an important variant of $\SLE_\kappa$ in which one keeps track of additional marked points.  In this paper, we will need the case in which there is a single extra marked point $x_0$ on the boundary.  In this case, the process defined by solving~\eqref{eqn:loewner_ode} with $W$ taken to be the solution to the SDE
\begin{equation}
dW_t = \sqrt{\kappa} dB_t + \frac{\rho}{W_t - V_t} dt,\quad dV_t = \frac{2}{V_t - W_t} dt, \quad V_0 = x_0.
\end{equation}
The point $x_0$ is the so-called \emph{force point} and $\rho$ is the so-called \emph{weight}.  When $\rho$ is positive (resp.\ negative), the process is repelled from (resp.\ attracted to) its force point.  The continuity of the $\SLE_\kappa(\rho)$ processes for $\rho > -2$ was proved in \cite{ms2016ig1}, for $\rho \in [\kappa/2-4,-2)$ in \cite{ms2019lightcone}, and for $\rho \in (-2-\kappa/2,\kappa/2-4)$ in \cite{msw2017cleperc}.

$\SLE_\kappa$ and $\SLE_\kappa(\rho)$ processes in general simply connected domains $D \subseteq \C$ are defined by conformally mapping the processes $\h \to D$ where $0$ (resp.\ $\infty$) is mapped to the starting (resp.\ ending) point of the curve.  The value $\rho = \kappa-6$ is special due to the \emph{target invariance} of the $\SLE_\kappa(\kappa-6)$ processes \cite{sw2005coordinate}.  This means that for all $x,y \in \partial \h \cup \{ \infty\}$ the law of an $\SLE_\kappa(\kappa-6)$ process in $\h$ from $0$ to $x$ with force point at $0^+$ stopped at the first time it disconnects $x$ from $y$ is the same as that of an $\SLE_\kappa(\kappa-6)$ process in $\h$ from $0$ to $y$ with force point at $0^+$ stopped at the first time it disconnects $y$ from $x$.  This property is crucial for the construction of $\CLE$.

\subsection{Conformal loop ensembles}
\label{subsec:cle}

\subsubsection{Definition of $\CLE$}

We are now going to review the definition of $\CLE$.  We will focus on the case $\kappa \in (4,8)$ since this is the regime of parameter values that we consider in this paper.  Suppose that $D \subseteq \C$ is a simply connected domain and $x \in \partial D$ is fixed.  Let $(y_n)$ be a fixed countable dense subset of $\partial D$ and, for each $n$, let $\eta_n$ be an $\SLE_\kappa(\kappa-6)$ process in $D$ from $x$ to $y_n$ with force point at $x^+$.  We assume that the $\eta_n$ are coupled together so as to agree until their target points are separated after which they evolve conditionally independently.  That this is possible follows from the target invariance of $\SLE_\kappa(\kappa-6)$ \cite{sw2005coordinate} mentioned just above.  The collection $(\eta_n)$ defines the so-called $\CLE_\kappa$ exploration tree \cite{s2009cle} which can be used to generate the loops of the $\CLE_\kappa$ that intersect $\partial D$.  The remaining loops of the $\CLE_\kappa$ are then defined by iterating this procedure in the complementary components of the union of these loops (or, for non-nested $\CLE_\kappa$, only the components that are outside these loops).

The loops of the $\CLE_\kappa$ that intersect $\partial D$ are defined from the $(\eta_n)$ as follows.  Fix $n \in \N$ and let $y$ be on the clockwise arc of $\partial D$ from $y_n$ to $x$ and let $\tau$ be the first time that $\eta_n$ disconnects $y_n$ from $y$.  We then let $\sigma$ be the largest time before $\tau$ that $\eta_n$ is in the clockwise segment of $\partial D$ from $y_n$ to $x$.  Then there exists a sequence $(y_{n_k})$ on the clockwise boundary segment from $y_n$ to $\eta_n(\sigma)$ converging to $\eta_n(\sigma)$.  A loop of the $\CLE_\kappa$ is defined by considering the concatenation of $\eta_{n_k}$ starting from $\sigma$ up until the first time it disconnects $y_{n_k}$ from $\eta_n(\sigma)$.  That the loops are continuous curves was proved in \cite{ms2016ig1} and that the resulting ensemble of loops does not depend on its starting point $x$ was proved in \cite{s2009cle,ms2016ig3}.

\subsubsection{Multichordal $\CLE_\kappa$}
\label{subsubsec:multichordal_cle}

We are now going to recall some of the results from \cite{amy-cle-resampling} about partially explored $\CLE_{\kappa}$, as they will be used extensively in the present article.  Let us begin by recalling \cite[Definition~1.1]{amy-cle-resampling}.
\begin{definition}
\label{def:marked_domain}
Let $D \subsetneq \C$ be a simply connected domain.  Fix $N \in \N_0$ and let $\ul{x} = (x_1,\ldots,x_{2N})$ be a collection of distinct prime ends which are ordered counterclockwise.
\begin{enumerate}[(i)]
\item We call $(D;\ul{x})$ a \emph{marked domain}. Let $\markeddomain{2N}$ be the collection of marked domains with $2N$ marked points.
\item Suppose that $(D;\ul{x}) \in \markeddomain{2N}$, and we have a non-crossing collection of paths $\ul{\gamma} = (\gamma_1,\ldots,\gamma_N)$ outside of $D$ where each $\gamma_i$ connects a distinct pair of marked points. The paths $\ul{\gamma}$ induce a planar link pattern $\beta$ on the exterior of $D$, formally described by a partition of the marked points into pairs $\beta = \{\{a_1,b_1\},\ldots,\{a_N,b_N\}\}$ where $\{a_1,b_1,\ldots,a_N,b_N\} = \{1,\ldots,2N\}$, $a_r<b_r$, and such that the configuration $a_r<a_s<b_r<b_s$ does not occur. We let $\eldomain{2N}$ be the collection of exterior link pattern decorated marked domains.
\item Suppose that $(D;\ul{x}) \in \markeddomain{2N}$, and we have a non-crossing collection of paths $\ul{\eta} = (\eta_1,\ldots,\eta_N)$ in~$D$ where each $\eta_i$ connects a distinct pair of marked points. We consider similarly the planar link pattern $\alpha$ on the interior of $D$ induced by the paths $\ul{\eta}$. We let $\ildomain{2N}$ be the collection of interior link pattern decorated marked domains.
\end{enumerate}
\end{definition}

We review the definition of the multichordal \clek{} in a marked domain $(D;\ul{x};\beta) \in \eldomain{2N}$ (resp.\ $(D;\ul{x};\alpha) \in \ildomain{2N}$) whose law we will denote by $\mcclelaw{D;\ul{x};\beta}$ (resp.\ $\mcclelaw{D;\ul{x};\alpha}$). A sample from $\mcclelaw{D;\ul{x};\beta}$ (resp.\ $\mcclelaw{D;\ul{x};\alpha}$) consists of a pair $(\ul{\eta}, \Gamma)$ where $\ul{\eta} = (\eta_1,\ldots,\eta_N)$ is a family of curves that connect the marked points $\ul{x}$ and, given $\ul{\eta}$, the collection $\Gamma$ consists of a conditionally independent nested $\CLE_{\kappa}$ in each of the complementary components of the~$\ul{\eta}$.

For $(D;\ul{x};\alpha) \in \ildomain{2N}$, the law of $\ul{\eta}$ under $\mcclelaw{D;\ul{x};\alpha}$ is that of a \emph{multichordal \slek{}} with interior link pattern given by $\alpha$. This is the unique law on curves that induce the link pattern $\alpha$ and such that for each~$i$, the conditional law of the curve starting at $x_i$ given the other curves is that of a chordal \slek{} in its complementary component.

For $(D;\ul{x};\beta) \in \eldomain{2N}$, the law of $\ul{\eta}$ under $\mcclelaw{D;\ul{x};\beta}$ is characterized as follows. In the case $N=1$ which we call a \emph{monochordal~\clek{}}, the law of $\eta_1$ is given by an \slek{} in $D$ from $x_1$ to $x_2$. In the case $N=2$ which we call a \emph{bichordal \clek{}}, we sample the interior link pattern $\alpha$ between the four points $x_1,\ldots,x_4$ from an explicit law determined in \cite{msw2020nonsimple,mw2018connection} (see \cite[equation~(1.1)]{amy-cle-resampling}), and given $\alpha$, the two chords $(\eta_1,\eta_2)$ are sampled from the law $\mcclelaw{D;\ul{x};\alpha}$. For general $N \ge 2$, the law of $\ul{\eta}$ under $\mcclelaw{D;\ul{x};\beta}$ is the unique law that is invariant under the following resampling kernels: Fix any $1 \leq i_1 < i_2 < i_3 < i_4 \leq 2N$ and condition on the $\eta_k$ that do not start at $x_{i_1},\ldots,x_{i_4}$. If there are exactly two chords connecting these four points and they are in the same complementary component of the remaining chords, resample the two chords according to the law of a bichordal \clek{} with exterior link pattern induced by $\beta$ and the remaining chords.

The following properties are shown in \cite[Theorems~1.3 and~1.6]{amy-cle-resampling}:
\begin{itemize}
 \item For each $(D;\ul{x};\beta) \in \eldomain{2N}$ and each interior link pattern $\alpha$ in $(D;\ul{x})$, the conditional law under $\mcclelaw{D;\ul{x};\beta}$ given the event that $\ul{\eta}$ induces the link pattern $\alpha$ is exactly $\mcclelaw{D;\ul{x};\alpha}$.
 \item The multichordal \clek{} law is conformally invariant, i.e., if $\varphi\colon D \to \wt{D}$ is a conformal transformation, then the law $\mcclelaw{\wt{D};\varphi(\ul{x});\beta}$ (resp.\ $\mcclelaw{\wt{D};\varphi(\ul{x});\alpha}$) is given by the pushforward of $\mcclelaw{D;\ul{x};\beta}$ (resp.\ $\mcclelaw{D;\ul{x};\alpha}$).
\end{itemize}

Next, we describe the \emph{partial explorations} of a $\CLE_{\kappa}$.  Suppose that $D \subseteq \C$ is a simply connected domain and let $\varphi \colon D \to \D$ a conformal transformation.  Let $\domainpair{D}$ consist of all pairs $(U,V)$ of simply connected domains $U \subseteq V \subseteq D$ with $\dist(\varphi(D \setminus V), \varphi(U)) > 0$.  (Note that $\domainpair{D}$ does not depend on the choice of $\varphi$.)  Suppose that $\Gamma$ is a nested $\CLE_{\kappa}$ in $D$ and $(U,V) \in \domainpair{D}$.   Let $\Gamma_\outside^{*,V,U}$ be the collection of maximal arcs of loops of $\Gamma$ that are disjoint from $U$ and intersect $D \setminus V$.  We call $\Gamma_\outside^{*,V,U}$ the \emph{partial exploration} of the loops of $\Gamma$ intersecting $D \setminus V$ up until they hit $U$. Let $V^{*,U}$ be the connected component containing $U$ after removing the loops and strands of $\Gamma_\outside^{*,V,U}$, and let $\Gamma_\inside^{*,V,U}$ denote the remainder of $\Gamma$ in $V^{*,U}$.

\begin{theorem}[{\cite[Theorem~1.11]{amy-cle-resampling}}]\label{thm:cle_partially_explored}
In the setup described above, the conditional law given $\Gamma_\outside^{*,V,U}$ of the remainder $\Gamma_\inside^{*,V,U}$ has the law of a multichordal $\CLE_{\kappa}$ in $V^{*,U}$ where the exterior link pattern $\beta$ is induced by the arcs of~$\Gamma_\outside^{*,V,U}$.
\end{theorem}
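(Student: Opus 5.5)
The plan is to identify the conditional law of $\Gamma_\inside^{*,V,U}$ given $\Gamma_\outside^{*,V,U}$ through the characterization of $\mcclelaw{V^{*,U};\ul{x};\beta}$ as the unique law that is invariant under the bichordal resampling kernels. Here the marked points $\ul{x}$ are the endpoints on $\partial V^{*,U}$ of the arcs of $\Gamma_\outside^{*,V,U}$, and $\beta$ is the exterior link pattern these arcs induce. The argument has three steps.

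\emph{Step 1: the remainder splits into chords and nested ensembles.} I would generate $\Gamma_\outside^{*,V,U}$ by a stopping-time exploration: run the branches of the $\CLE_\kappa$ exploration tree, i.e.\ $\SLE_\kappa(\kappa-6)$ processes, together with the iterated explorations inside the loops, and stop each branch when it first hits $U$. The domain Markov property of $\SLE_\kappa(\kappa-6)$ shows that the unexplored configuration consists of two pieces. The first is a collection of chords in $V^{*,U}$ which complete the explored arcs into loops. The second is, given these chords, conditionally independent nested $\CLE_\kappa$'s in the complementary components; this follows from the Markov property of nested CLE in the components cut out. It therefore suffices to determine the conditional law of the chords $\ul{\eta}$.

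\emph{Step 2: invariance under the bichordal kernels.} Fix four marked points joined by exactly two chords that lie in the same component of the other chords. I would show that the conditional law of these two chords, given everything else, is the bichordal $\CLE_\kappa$ with the induced exterior link pattern. This uses the resampling property of CLE: the conditional law of the pair of loop strands is given by the explicit link-pattern law of \cite{msw2020nonsimple,mw2018connection}, and, given the interior link pattern, each strand is an $\SLE_\kappa$ in its complementary domain. Conformal invariance lets me transport this statement into the slit domain $V^{*,U}$.

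\emph{Step 3: uniqueness.} I expect this to be the main obstacle. Invariance under the resampling kernels determines the law only if the invariant law is unique, and proving uniqueness requires a coupling argument. I would iterate the kernels on two configurations with the same marked data and use a uniform positive probability, by absolute continuity estimates for $\SLE$, that a resampled pair lands in a fixed reference shape. The difficulty is that there can be many chords and the marked points may accumulate, so I would first do the case of finitely many marked points and then pass to general configurations by a localization or monotone limit. Once uniqueness holds, Steps 1 and 2 identify the chord law as the multichordal law and hence prove the theorem.
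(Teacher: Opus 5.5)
\textbf{Where the difficulty lies.} The paper does not prove this theorem; it imports it from \cite[Theorem~1.11]{amy-cle-resampling}. The result your Step~3 aims at is already part of the setup the paper recalls: $\mcclelaw{D;\ul{x};\beta}$ is characterized as \emph{the unique} law invariant under the bichordal resampling kernels. So uniqueness is not the obstacle. Accumulation of marked points is not an issue either. Only finitely many loops of the nested $\CLE_\kappa$ meet both $D\setminus V$ and $U$, because $\dist(\varphi(D\setminus V),\varphi(U))>0$, and each loop crosses finitely often, so $N<\infty$ a.s. With uniqueness granted, the whole content of the theorem is your Step~2, and that is where your proposal has a genuine gap.

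\textbf{The gap in Step~2.} Invariance under a kernel means the following: for a.e.\ realization of $\Gamma_\outside^{*,V,U}$, the conditional law of two chords $\eta_1,\eta_2$ given all the \emph{other} chords is the bichordal $\CLE_\kappa$ in their component $W$. The inputs you cite from \cite{mw2018connection,msw2020nonsimple} are four-point statements. They concern configurations, arising from an exploration of the $\CLE_\kappa$ from outside, in which only two strands are left. Here, by contrast, you condition on $\eta_3,\ldots,\eta_N$, which are unexplored pieces of loops lying inside $V^{*,U}$, and $W$ is cut out by them.

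To reduce to the four-point result, you would need to know that revealing these interior strands is itself a Markovian operation. That is, after revealing $\Gamma_\outside^{*,V,U}$ and then $\eta_3,\ldots,\eta_N$, the configuration in $W$ must be a bichordal $\CLE_\kappa$ with the induced exterior link pattern. But this is a partial-exploration statement of exactly the kind being proved, for a random domain and an exploration not of the form $(U,V)$, so the argument is circular. Conformal invariance only moves a law you already know between domains; it cannot produce this Markov property.

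The same objection applies to your claim that each strand is $\SLE_\kappa$ given the other. The $\SLE_\kappa(\kappa-6)$ exploration yields this only when a single loop is partially traced and one strand remains. What is missing is an independent argument establishing the spatial Markov property with respect to strands in the interior of the unexplored region. This is the substantive part of \cite{amy-cle-resampling}.

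\textbf{A smaller point on Step~1.} Stopping the exploration-tree branches when they first hit $U$ does not produce $\Gamma_\outside^{*,V,U}$. Loops meeting $D\setminus V$ may only be reached after a branch has entered $U$, and they must be traced in both directions until hitting $U$. The conditional independence of the nested ensembles given the chords is better obtained from the ordinary Markov property of nested $\CLE_\kappa$, after conditioning on all loops that meet $D\setminus V$.
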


The following result will be useful.

\begin{proposition}[{\cite[Theorem~1.6(iv)]{amy-cle-resampling}}]\label{pr:link_probability}
Let $(D;\ul{x};\beta) \in \eldomain{2N}$ and $\alpha$ be an interior link pattern in $(D;\ul{x})$. Let $\alpha(\ul{\eta})$ be the interior link pattern induced by $\ul{\eta}$. Then $\mcclelaw{D;\ul{x};\beta}[\alpha(\ul{\eta}) = \alpha] > 0$, and this probability depends continuously on $\ul{x}$.
\end{proposition}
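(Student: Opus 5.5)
Since this is \cite[Theorem~1.6(iv)]{amy-cle-resampling}, we will only sketch how we would prove it. The two claims, positivity and continuity, are handled separately. By the conformal invariance of $\mcclelaw{D;\ul{x};\beta}$ recalled above, we may take $D = \h$ with all marked points finite.

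\emph{Positivity.} The approach is to propagate positivity from one link pattern to all of them using the resampling kernels that characterize the law.
\begin{enumerate}[(a)]
\item Some interior link pattern $\alpha_0$ has positive probability, since there are only finitely many patterns.
\item Any two non-crossing interior link patterns on $2N$ points are joined by a finite sequence of elementary flips. A flip takes two chords $\{a,b\},\{c,d\}$ bounding a common face and reconnects the four endpoints the other way, keeping the pattern planar. This is a standard fact about non-crossing matchings: they form a connected graph under such flips.
\item By invariance under the bichordal resampling kernel, if $\mcclelaw{D;\ul{x};\beta}[\alpha(\ul{\eta})=\alpha']>0$, then $\mcclelaw{D;\ul{x};\beta}[\alpha(\ul{\eta})=\alpha'']>0$ for the flipped pattern $\alpha''$. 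This requires two inputs.
\end{enumerate}
The first input is that on the event $\{\alpha(\ul{\eta})=\alpha'\}$, the two chords to be flipped lie in the same complementary component of the other chords. This holds because they bound a common face of the planar configuration. The second input is that a bichordal $\CLE_\kappa$ assigns positive probability to each of its two admissible interior patterns, whatever the exterior pattern. This follows from the explicit law in \cite{msw2020nonsimple,mw2018connection}: its weights are ratios of hypergeometric-type partition functions that are strictly positive for $\kappa\in(4,8)$ at every cross-ratio. Iterating along the flip sequence from $\alpha_0$ gives positivity for every $\alpha$.

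\emph{Continuity.} Fix $\ul{x}$ and let $\ul{x}^n\to\ul{x}$. We would construct a coupling of the laws with marked points $\ul{x}^n$ and $\ul{x}$ in which the induced interior link patterns agree with probability tending to one. The construction has three steps.
\begin{enumerate}[(a)]
\item Choose small disjoint half-disks $U_i$ around each $x_i$, with radius $\epsilon$ much larger than $|x^n_i - x_i|$.
\item By the partial-exploration Markov property (Theorem~\ref{thm:cle_partially_explored}, suitably localized to the marked domain setting), explore the chords inside each $U_i$ until they exit a larger half-disk $U_i'$.
\item The configuration in $U_i'$ near $x_i^n$ versus near $x_i$ can be coupled so that, with probability tending to one as $n\to\infty$, the explored strands exit $U_i'$ at the same points. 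For this we use the local absolute continuity of chordal $\SLE_\kappa$ started from nearby boundary points, after mapping by conformal maps close to the identity.
\end{enumerate}
Once the strands agree outside $\bigcup_i U_i'$, the remaining configuration can be taken identical. The interior pattern is then determined unless a chord makes an excursion of size $\epsilon$ near a marked point and reconnects there. The probability of such an excursion tends to zero as $\epsilon\to0$, uniformly in $n$, by standard $\SLE$ boundary arm estimates.

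The main obstacle is this last uniformity. The marked points $x_i$ are distinct, so near each one only a single strand starts. However, the law of the remaining chords depends on the global exterior pattern $\beta$, and we must rule out that conditioning on $\beta$ creates singular behaviour near the marked points.

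We would first try to avoid the coupling altogether by writing the probability as a ratio of multichordal partition functions, $\mcclelaw{D;\ul{x};\beta}[\alpha(\ul\eta)=\alpha] = \mathcal{Z}_{\alpha,\beta}(\ul{x})/\sum_{\alpha'}\mathcal{Z}_{\alpha',\beta}(\ul{x})$. Continuity of each $\mathcal{Z}_{\alpha,\beta}$ in $\ul{x}$, together with positivity of the denominator, would then give the claim directly. Continuity of $\mathcal{Z}_{\alpha,\beta}$ would come from its representation as an expectation of smooth Radon–Nikodym factors relative to independent chords, in the style of the pure partition function constructions.
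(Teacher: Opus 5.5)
The paper does not prove this statement; it imports it from \cite[Theorem~1.6(iv)]{amy-cle-resampling}. Your sketch therefore has to stand on its own, and it has two problems: one false step in the positivity part, and a continuity part that is not yet a proof.

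\emph{Positivity.} Your ``first input'' is false as a deterministic statement when $\kappa\in(4,8)$.
\begin{itemize}
\item Take $N=3$, $\alpha'=\{\{1,2\},\{3,4\},\{5,6\}\}$, and flip $\{1,2\},\{3,4\}$. Work under $\mcclelaw{D;\ul{x};\alpha'}$, which is the conditional law given $\alpha(\ul\eta)=\alpha'$.
\item Conditionally on $\eta_{12},\eta_{34}$, the chord $\eta_{56}$ is a chordal \slek{} in its complementary component. With positive probability the boundary of that component contains a macroscopic piece of the arc of $\partial D$ between $x_2$ and $x_3$, for instance when $\eta_{12}$ and $\eta_{34}$ stay close to their own arcs.
\item Since $\kappa>4$, $\eta_{56}$ then hits that piece with positive probability. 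On this event $x_2$ and $x_3$ lie in different components of $D\setminus\eta_{56}$, so the kernel for $\{1,2,3,4\}$ acts as the identity.
\end{itemize}
Combinatorial adjacency does not stop a non-simple third chord bounding the same face from pinching it. What the flip step actually needs is weaker: the event that the four endpoints lie on one complementary component of the remaining chords must have positive probability under $\mcclelaw{D;\ul{x};\alpha'}$. This is true; for example, with positive probability every chord stays in a thin tube around a fixed family of disjoint simple curves realizing $\alpha'$. But it has to be argued.

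\emph{Continuity.} Apart from the uniformity you flag yourself, step (c) of your coupling does not deliver what you use next.
\begin{itemize}
\item Matching the exit points on $\partial U_i'$ does not let you take the remaining configuration identical. The unexplored domains $D\setminus(\text{explored strands})$ still differ inside $\bigcup_i U_i'$. Identifying the two remainders would require total-variation continuity of the multichordal law under perturbations of the domain near the marked points, which is essentially the statement being proved.
\item Local absolute continuity of chordal \slek{} from nearby starting points does not apply directly. Under $\mcclelaw{D;\ul{x};\beta}$ the strand from $x_i$ is not marginally a chordal \slek{} in $D$; its law depends on all marked points through $\beta$.
\end{itemize}
The fallback formula with $\mathcal Z_{\alpha,\beta}$ is only asserted. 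The law $\mcclelaw{D;\ul{x};\beta}$ is defined as the unique law invariant under the bichordal kernels, so identifying its link-pattern weights requires two things you do not supply:
\begin{itemize}
\item that the mixture $\sum_\alpha w_\alpha(\ul x)\,\mcclelaw{D;\ul{x};\alpha}$ with your weights is invariant under every such kernel, which is a compatibility between the explicit bichordal connection probabilities of \cite{msw2020nonsimple,mw2018connection} and ratios of non-simple multiple-$\SLE_\kappa$ partition functions;
\item continuity of those partition functions for $\kappa\in(4,8)$.
\end{itemize}
This continuity, together with compactness of $\dsep$-separated configurations, is exactly what the paper uses to get the uniform lower bound in the proof of Lemma~\ref{lem:gasket_measure_first_moment_mcle}. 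It is the part that most needs a complete argument.
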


Finally, we collect a result establishing the absolute continuity between multichordal \clek{} and usual \clek{} in a quantitative way. To state it, we introduce the following definitions.

Suppose that $(D;\ul{x}) \in \markeddomain{2N}$. Let us consider the $2N$ boundary arcs of $\partial D$ between the marked points $\ul{x}$ which we can regard as alternatingly \emph{wired} and \emph{free}. Let $\CI$ be a non-empty collection of \emph{even} (or \emph{wired}) boundary arcs. For a multichordal \clek{} $(\ul{\eta},\Gamma)$, let $\Upsilon_\CI$ be the union of the gaskets of $\Gamma$ in the components of $D \setminus \ul{\eta}$ connected to $\CI$. For a usual nested \clek{}, we let $\Upsilon$ denote its exterior gasket.

For each compact set $K \subseteq D$, let $\Gamma|_K$ be the collection of non-trivial strands of $\Gamma$ and $\ul{\eta}$ restricted to $K$, not distinguishing between the strands of $\Gamma$ and $\ul{\eta}$.
Let $\CE_K$ be the collection of events that are measurable with respect to $(\Gamma|_K, \Upsilon_\CI \cap K)$. We write $E = E(\Upsilon_\CI)$ to emphasis the dependence on the choice of the gasket.

\begin{lemma}\label{le:mcle_abs_cont}
 Let $D \subseteq \C$ be a simply connected domain and $K \subseteq D$ a compact set. For each $\dsep>0$ and $\varepsilon>0$ there exists $\delta>0$ such that the following holds. Suppose that $(D;\ul{x};\beta) \in \eldomain{2N}$ and the distances between distinct points in $\ul{x}$ are at least $\dsep$. Let $\CI$ be a non-empty collection of even boundary arcs as described above. Let $\mcclelaw{D;\ul{x};\beta}$ be the law of a multichordal \clek{} in $(D;\ul{x};\beta)$, and $\p_D$ the law of a nested \clek{} in $D$. Then, for each $E \in \CE_K$ with $\p_D[E(\Upsilon)] < \delta$, we have $\mcclelaw{D;\ul{x};\beta}[E(\Upsilon_\CI)] < \varepsilon$, and vice versa. The same holds for $\mcclelaw{D;\ul{x};\alpha}$ where $(D;\ul{x};\alpha) \in \ildomain{2N}$.
\end{lemma}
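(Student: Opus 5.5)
The plan is to reduce both laws to the same object: a multichordal \clek{} in an explored domain $V^{*,U}$ surrounding $K$. First I use compactness to make the estimates uniform over all admissible marked domains, and then compare the two resulting laws of the boundary data. Fix simply connected domains $K \subseteq U \Subset V \Subset D$ with $(U,V) \in \domainpair{D}$. By conformal invariance we may take $D = \D$. The constraint that the marked points are $\dsep$-separated then bounds $N$ in terms of $\dsep$. Hence the admissible data $(\ul{x},\beta)$ range over a compact set (finitely many link patterns, closed configurations of separated points).

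\emph{Step 1: reduction from $\beta$ to $\alpha$.} By Proposition~\ref{pr:link_probability}, $\mcclelaw{D;\ul{x};\beta}[\alpha(\ul{\eta})=\alpha]$ is positive and continuous in $\ul{x}$. By compactness it is therefore bounded below by some $p(\dsep)>0$. The law $\mcclelaw{D;\ul{x};\beta}$ is the mixture over $\alpha$ of $\mcclelaw{D;\ul{x};\alpha}$, and by the first bullet after Definition~\ref{def:marked_domain} each mixture weight is at least $p$. So the statement for $\beta$ follows from the statement for all interior patterns $\alpha$ with $\delta$ adjusted by the factor $p$, in both directions. It therefore suffices to treat $\mcclelaw{D;\ul{x};\alpha}$.

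\emph{Step 2: explore from the outside.} For the nested \clek{} in $D$, Theorem~\ref{thm:cle_partially_explored} says that, given $\Gamma_\outside^{*,V,U}$, the configuration in $V^{*,U}$ is a multichordal \clek{} with boundary data $\omega=(V^{*,U};\ul{y};\beta')$ induced by the explored arcs. For the multichordal law I will run the analogous exploration: explore the strands of $\ul{\eta}$ and the loops of $\Gamma$ that intersect $D\setminus V$, each until it first hits $U$. The resampling characterization of multichordal \clek{}, together with the Markov property of its chords (each chord given the others is an \slek{}), should give the same conclusion. Namely, the conditional law in the component containing $U$ is again a multichordal \clek{} with some data $\omega'$. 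In both cases the restriction to $K$ of $(\Gamma|_K,\Upsilon\cap K)$, respectively $(\Gamma|_K,\Upsilon_\CI\cap K)$, is a function of the inner configuration together with a label recording which inner wired arcs are connected (through the explored part) to $\partial D$, respectively to $\CI$.

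\emph{Step 3: compare the laws of the boundary data.} Given $\varepsilon$, choose $s'>0$ and $M$ so that, under both laws and uniformly in the admissible $(\ul{x},\alpha)$, the following holds with probability at least $1-\varepsilon/2$. The explored data $\omega$ has at most $M$ marked points, and these are $s'$-separated after mapping $V^{*,U}$ to $\D$ with $U$ in a fixed compact region. For the multichordal side this should follow from the one-arm and separation estimates for \slek{} near $\partial U$, which are uniform by compactness. On this good set of data I then want two properties.
\begin{itemize}
\item The map $\omega \mapsto$ (law of the configuration restricted to $K$) is continuous in total variation. This uses conformal invariance, and the continuity in Proposition~\ref{pr:link_probability} applied in the explored domain.
\item Every good datum, including its connectivity label, has probability bounded below under both laws, when the data space is discretized into finitely many small cells.
\end{itemize}
Together these yield a bounded Radon--Nikodym comparison on each cell, and hence $\mcclelaw{D;\ul{x};\alpha}[E(\Upsilon_\CI)] \le \varepsilon/2 + C\,\p_D[E(\Upsilon)]$ and the reverse inequality. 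Choosing $\delta=\varepsilon/(2C)$ finishes the proof.

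The main obstacle is the lower bound in the second bullet of Step 3. One has to show that each cell of boundary data, and each pattern of connections to $\partial D$ versus to $\CI$, is reached with uniformly positive probability under \emph{both} laws. For this I would first try explicit constructions. On the usual \clek{} side, use loops hugging the outer boundary. On the multichordal side, chords that stay in a thin neighborhood of $\partial D$ together with a $\CI$-adjacent wired arc occur with positive probability; their probability is continuous in the data, so it is uniformly positive by compactness. Given these, the remaining strands to $\partial U$ can be steered by the same \slek{} arm estimates under both laws.
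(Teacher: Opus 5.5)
Your Step~1 (mixing over interior link patterns, with weights bounded below via Proposition~\ref{pr:link_probability} and compactness) is fine. So is the overall scheme ``comparability of supports, then continuity in total variation and compactness''. The gap is that the comparability itself is never established, and the outside-in exploration of Steps~2--3 does not make it easier.

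You would need every cell of explored data $\omega=(V^{*,U};\ul{y};\beta')$, with its connectivity label, to be charged with uniformly positive probability by both $\p_D$ and $\mcclelaw{D;\ul{x};\alpha}$. That requires controlling the geometry of the explored strands in $\ol{V}\setminus U$. This is a local absolute-continuity statement on a compact subset of $D$ of exactly the type you are trying to prove, with global connectivity constraints added.

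Your suggested constructions do not supply it. Arm estimates control crossing probabilities, not whether a law charges a prescribed Carath\'eodory neighbourhood of a fractal domain. And \clek{} loops hugging $\partial D$ leave an ordinary \clek{} inside, so they cannot reproduce the chord strands that the multichordal configuration may have near $U$. The route also rests on further unproved inputs:
\begin{itemize}
\item an outside-in Markov property for multichordal \clek{};
\item uniform separation of $\omega$ under the multichordal law;
\item continuity of the local law in total variation in $\omega$.
\end{itemize}
The last does not follow from Proposition~\ref{pr:link_probability}, which only concerns link-pattern probabilities; the relevant fact is \cite[Proposition~6.3]{amy-cle-resampling}.

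The missing idea is to compare the two laws directly on $K$, by resampling rather than exploring.

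\emph{Multichordal to \clek{}.} Suppose $\mcclelaw{D;\ul{x};\beta}[E(\Upsilon_\CI)]>0$. Resample the configuration in $D\setminus K$ as in \cite[Section~5.2]{amy-cle-resampling}, so that with positive conditional probability:
\begin{itemize}
\item the chords avoid $K$;
\item the components of $\Upsilon_\CI\cap K$ stay connected to $\CI$ (possible because $\CI$ consists of arcs of the same parity);
\item the other components of $K\setminus\ul{\eta}$ end up inside loops.
\end{itemize}
Then $(\Gamma|_K,\Upsilon_\CI\cap K)$ is the local data of a genuine \clek{} in the random domain $D'\supseteq K$ cut out by the resampled chords. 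Local absolute continuity of \clek{} in $D'$ versus $D$ (\cite[Lemma~7.2]{my2025geouniqueness}) then gives $\p_D[E(\Upsilon)]>0$.

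\emph{\clek{} to multichordal.} Your thin-neighbourhood idea is the right one, but use it directly. With positive probability the chords stay close to $\partial D\setminus I$ for some $I\in\CI$ (\cite[Lemma~3.10]{amy-cle-resampling}). On that event the loops in the component $D'$ containing $K$ form a \clek{} in a domain close to $D$, whose exterior gasket coincides with $\Upsilon_\CI$ there.

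This gives mutual absolute continuity for each fixed marked domain. The uniform $\varepsilon$--$\delta$ statement then follows from total-variation continuity of the law of $(\Gamma|_K,\Upsilon_\CI\cap K)$ in the Carath\'eodory topology, together with compactness of the $\dsep$-separated configurations.
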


\begin{figure}[ht]
\centering
\includegraphics[width=0.4\textwidth,page=1]{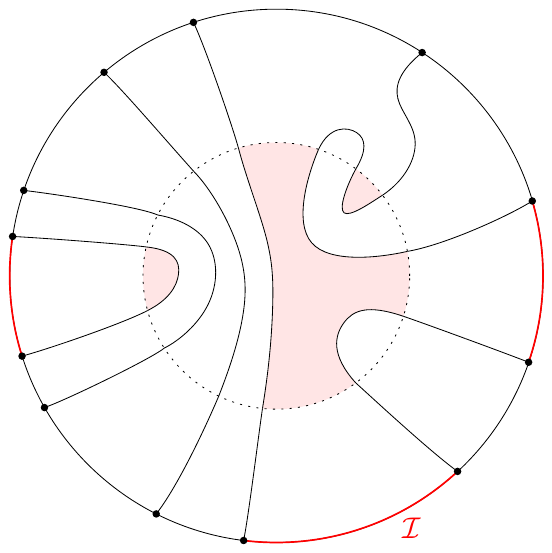}\hspace{0.1\textwidth}\includegraphics[width=0.4\textwidth,page=2]{resample_abs_cont.pdf}
\caption{Illustration of how the chords $\ul{\eta}$ can be resampled in $D \setminus K$ as to link outside $K$ while preserving the gasket $\Upsilon_\CI \cap K$ (indicated in red).}
\label{fi:resample_abs_cont}
\end{figure}

\begin{proof}
By \cite[Proposition~6.3]{amy-cle-resampling}, the law of $(\Gamma|_K, \Upsilon_\CI \cap K)$ under $\mcclelaw{D;\ul{x};\beta}$ (resp.\ $\mcclelaw{D;\ul{x};\alpha}$) is continuous in total variation in the Carath\'eodory topology of $(D;\ul{x}) \in \markeddomain{2N}$. Therefore it suffices to argue that for each particular $(D;\ul{x};\beta) \in \eldomain{2N}$ (resp.\ $(D;\ul{x};\alpha) \in \ildomain{2N}$), the laws of $(\Gamma|_K, \Upsilon_\CI \cap K)$ under $\mcclelaw{D;\ul{x};\beta}$, $\mcclelaw{D;\ul{x};\alpha}$, and $\p_D$ are mutually absolutely continuous; the uniform bound then follows by the local compactness of the Carath\'eodory topology.

Let $E \in \CE_K$ with $\mcclelaw{D;\ul{x};\beta}[E(\Upsilon_\CI)] > 0$. By \cite[Section~5.2]{amy-cle-resampling}, we can resample the multichordal \clek{} in $D \setminus K$ so that with positive conditional probability given $(\ul{\eta},\ul{\Gamma})$ the following hold: Let $(\ul{\eta}^\resampled,\Gamma^\resampled)$ denote the resampled multichordal \clek{}.
\begin{itemize}
 \item We have $\ul{\eta}^\resampled \subseteq D \setminus K$.
 \item The components of $\Upsilon_\CI \cap K$ remain connected to $\CI$; the other components of $K \setminus \ul{\eta}$ are inside loops of $\Gamma^\resampled$. (This is possible since $\CI$ contains boundary arcs of the same parity.)
\end{itemize}
See Figure~\ref{fi:resample_abs_cont} for an illustration. Let $D'$ be the connected component of $D \setminus \ul{\eta}^\resampled$ containing $K$, let $\Gamma^\resampled_{D'}$ be the loops of $\Gamma^\resampled$ in $D'$, and let $\Upsilon'$ be the exterior gasket of $\Gamma^\resampled_{D'}$. On the event described above we have $\Gamma|_K = \Gamma^\resampled_{D'}|_K$ and $\Upsilon_\CI \cap K = \Upsilon' \cap K$. This shows that $\p_{D'}[E(\Upsilon')] > 0$ for a random domain $D'$ with $K \subseteq D' \subseteq D$. Further, by \cite[Lemma~7.2]{my2025geouniqueness} the \clek{} in $D$ and $D'$ are locally mutually absolutely continuous, and the argument shows that the same holds for $\Upsilon \cap K$. Hence, we also have $\p_D[E(\Upsilon)] > 0$.

Next, suppose that $E \in \CE_K$ and $\p_D[E(\Upsilon)] > 0$. By \cite[Proposition~6.3]{amy-cle-resampling}, we also have $\p_{D'}[E(\Upsilon)] > 0$ for all $D'$ sufficiently close to $D$. By \cite[Lemma~3.10]{amy-cle-resampling}, the $\mcclelaw{D;\ul{x};\alpha}$-probability that $\ul{\eta}$ stay in a sufficiently close neighborhood of $\partial D$ is positive. Moreover, if $I$ is a boundary arc in the non-empty collection $\CI$, then we can require $\ul{\eta}$ to stay in a small neighborhood of $\partial D \setminus I$. If this occurs and $D'$ is the connected component of $D \setminus \ul{\eta}$ containing $K$, then $\partial D' \cap I \neq \varnothing$, hence the exterior gasket $\Upsilon'$ of $\Gamma_{D'}$ (the loops of $\Gamma$ in $D'$) agrees with $\Upsilon_\CI$ in $D'$. Therefore $\mcclelaw{D;\ul{x};\alpha}[E(\Upsilon_\CI)] > 0$.

The final implication that $\mcclelaw{D;\ul{x};\alpha}[E(\Upsilon_\CI)] > 0$ implies $\mcclelaw{D;\ul{x};\beta}[E(\Upsilon_\CI)] > 0$ is trivial.
\end{proof}

\subsection{Geodesic and resistance metrics on $\CLE$ gaskets}
\label{subsec:cle_metrics}

We will now recall the definitions of the $\CLE_\kappa$ metrics.  The material here is only important for parts~\eqref{it:geo_count} and~\eqref{it:res_count} of Theorem~\ref{th:minkowski}.  Readers who are interested in only the other parts of Theorem~\ref{th:minkowski} can skip this section.  The canonical geodesic metric was constructed in \cite{my2025geouniqueness} and the canonical resistance metric in \cite{my2025resuniqueness}, both building off the general tightness result \cite{amy2025tightness}.

Let $\Gamma$ be a nested \clek{} in $D$. The \clek{} metrics are defined on each of the \emph{interior} gaskets of~$\Gamma$. Here, they are regarded as sets of \emph{prime ends} equipped with the topology given by $\dpathY$ in~\eqref{eq:dpath}, formally defined as the metric space completions of $(D \setminus \bigcup\Gamma, \dpathY)$. Note that the components of $\Upsilon_{\partial D} \setminus \partial D$ where $\Upsilon_{\partial D}$ is the exterior gasket are also considered as individual interior gaskets.

\subsubsection{Geodesic \clek{} metric}

We now define the \emph{geodesic \clek{} metric}. For a metric $\met{\cdot}{\cdot}{\Gamma}$ defined on a gasket $\Upsilon$ of $\Gamma$ and path $\gamma$ in $\Upsilon$ we let
\[
 \lmet{\gamma}=\sup_{t_0\leq\ldots\leq t_M}\sum_{i=1}^M \met{\gamma(t_{i-1})}{\gamma(t_{i})}{\Gamma}
\]
be its \emph{length} with respect to $\met{\cdot}{\cdot}{\Gamma}$. Recall that the metric $\met{\cdot}{\cdot}{\Gamma}$ is called a geodesic metric if for each $x,y \in \Upsilon$ there is a path $\gamma$ in $\Upsilon$ from $x$ to $y$ with $\lmet{\gamma} = \met{x}{y}{\Gamma}$.

If $U \subseteq \C$ is an open set, then $\Upsilon \cap U$ splits into a countable collection of connected components with respect to $\dpathY$. The \emph{internal metric} in $U$ is defined on each connected component of $\Upsilon \cap U$ and each gasket $\Upsilon$ of $\Gamma$ as follows.
\begin{equation}\label{eq:internal_metric}
 \metres{U}{x}{y}{\Gamma} = \inf_{\gamma \in \paths{x}{y}{U}{\Gamma}} \lmet{\gamma} ,\quad x,y \in \Upsilon \cap U 
\end{equation}
where $\paths{x}{y}{U}{\Gamma}$ is the set of admissible paths for $\Gamma$ between $x$ and $y$ contained in $U \cap D$ (as defined above~\eqref{eq:dpath}).

It is shown in \cite{amy2025tightness,my2025geouniqueness} that there is a unique (up to a deterministic constant) collection of metrics on the interior gaskets of the nested \clek{} for each simply connected domain $D \subseteq \C$ such that the following properties hold.

\textbf{Geodesic metric, continuity:} For each interior gasket $\Upsilon$ of $\Gamma$, the metric $\met{\cdot}{\cdot}{\Gamma}$ on~$\Upsilon$ is a geodesic metric, and it is continuous with respect to~$\dpathY$.

\textbf{Locally determined:} For each open $U \subseteq \C$, the collection of internal metrics $\metres{U}{\cdot}{\cdot}{\Gamma}$ defined in~\eqref{eq:internal_metric} is almost surely a measurable function of $\Gamma|_U$ that does not depend on the choice of $D$. Here $\Gamma|_U$ denotes the countable collection of loops and strands of $\CL \cap U$ for each $\CL \in \Gamma$.

\textbf{Translation invariance:} For each simply connected domain $D \subseteq \C$ and each $z \in \C$, we have $\met{\cdot+z}{\cdot+z}{\Gamma+z} = \met{\cdot}{\cdot}{\Gamma}$ almost surely.

Further, the geodesic \clek{} metric is \emph{conformally covariant} in the following sense. There is a constant $\geoexp > 0$ depending only on $\kappa$ such that if $\varphi\colon D \to \wt{D}$ is a conformal transformation, then almost surely, for each admissible path $\gamma \subseteq D$,
 \[
  \len{\met{\cdot}{\cdot}{\varphi(\Gamma)}}{\varphi(\gamma)} = \int \abs{\varphi'(\gamma(t))}^{\geoexp} \,d\len{\met{\cdot}{\cdot}{\Gamma}}{\gamma} .
 \]

\subsubsection{Resistance metric}

We now define the \emph{\clek{} resistance metric}. First, we recall the definition of a resistance metric \cite{k2001analysis}. A metric $R$ on a non-empty set $X$ is called a \emph{resistance metric} if for each finite subset $A \subseteq X$ there is a (unique) symmetric weight function $w\colon A \times A \to [0,\infty)$ such that the restriction of $R$ to $A$ agrees with the effective resistance metric associated with the weighted graph $G = (A,w)$.

Suppose that $\Upsilon$ is an interior gasket of $\Gamma$ and $\rmet{\cdot}{\cdot}{\Gamma}$ is a resistance metric on $\Upsilon$. We define the collection of \emph{internal metrics} for the resistance metric. For each finite subset $A \subseteq \Upsilon$ we let $w_A\colon A \times A \to [0,\infty)$ be the associated weight function. Suppose that $w_A(x,y) = 0$ whenever $x,y$ are separated by $A \setminus \{x,y\}$ in $\Upsilon$. Let $v_1,\ldots,v_n \in \Upsilon$, and let $V$ be a connected component of $\Upsilon$ with respect to $\dpathY$. Then (see \cite[Section~6.2]{my2025resuniqueness}) there is a unique resistance metric $\rmetres{V}{\cdot}{\cdot}{\Gamma}$ on~$\ol{V}$ such that the following holds. Let $A \subseteq \Upsilon$ be a finite subset containing $v_1,\ldots,v_n$ and such that each pair $v_i,v_j$ is disconnected by $A \setminus \{v_1,\ldots,v_n\}$ in~$\Upsilon$. Let $w^V_A$ be the weight function associated with the restriction of $\rmetres{V}{\cdot}{\cdot}{\Gamma}$ to $A \cap \ol{V}$. Then
\begin{equation}\label{eq:internal_rmet}
 w^V_A(x,y) = w_A(x,y) \quad\text{for each } A \cap \ol{V} .
\end{equation}

It is shown in \cite{amy2025tightness,my2025resuniqueness} that there is a unique (up to a deterministic constant) collection of metrics on the interior gaskets of the nested \clek{} for each simply connected domain $D \subseteq \C$ such that the following properties hold.

\textbf{Resistance metric, continuity, locality:} For each interior gasket $\Upsilon$ of $\Gamma$, the metric $\rmet{\cdot}{\cdot}{\Gamma}$ on~$\Upsilon$ is a resistance metric, and it is continuous with respect to~$\dpathY$. Furthermore, if $A \subseteq \Upsilon$ is a finite subset and $w_A\colon A \times A \to [0,\infty)$ is the associated weight function, then $w_A(x,y) = 0$ whenever $x,y$ are separated by $A \setminus \{x,y\}$ in $\Upsilon$.

\textbf{Locally determined:} For each open $U \subseteq \C$, the collection of internal metrics $\rmetres{V}{\cdot}{\cdot}{\Gamma}$ for each $\ol{V} \subseteq U$ defined above~\eqref{eq:internal_rmet} is almost surely a measurable function of $\Gamma|_U$ that does not depend on the choice of $D$. Here $\Gamma|_U$ denotes the countable collection of loops and strands of $\CL \cap U$ for each $\CL \in \Gamma$.

\textbf{Translation invariance:} For each simply connected domain $D \subseteq \C$ and each $z \in \C$, we have $\rmet{\cdot+z}{\cdot+z}{\Gamma+z} = \rmet{\cdot}{\cdot}{\Gamma}$ almost surely.

Further, the \clek{} resistance metric is \emph{scaling covariant} in the following sense. There is a constant $\resexp > 0$ depending only on $\kappa$ such that for each simply connected domain $D$ and each $\lambda > 0$ we have
\[
\rmet{\lambda\cdot}{\lambda\cdot}{\lambda\Gamma} = \lambda^{\resexp}\rmet{\cdot}{\cdot}{\Gamma} .
\]

\subsection{Independence across scales}
\label{subsec:independence_across_scales}

The proofs in this paper make use of the independence across scales argument established in~\cite{amy-cle-resampling}. For this, we set some notation.

Recall the definition of $\CS_j$ in Section~\ref{subsec:main_results}. For each $j \in \N$ and $Q \in \CS_j$ we let $A_Q = 2Q \setminus (3/2)Q$. Let $A_Q^\inside = (3/2)Q$ (resp.\ $A_Q^\outside = \C \setminus 2Q$) denote the inner (resp.\ outer) face of $A_Q$. Note that if $Q,Q' \in \CS_j$ do not share a vertex, their corresponding annuli $A_Q,A_{Q'}$ are disjoint. Further, if $\wt{Q} \in \CS_k$ for some $k>j$ and $\wt{Q} \subseteq Q$, then $A_{\wt{Q}} \subseteq A_Q^\inside$.

Let $\Gamma$ be a nested \clek{} in $D$. For each $z \in D$ and $j \in \N$ sufficiently large, there is a unique square $Q_{z,j} \in \CS_j$ containing $z$. Let $B_{z,j}^\inside = A_{Q_{z,j}}^\inside$ and $B_{z,j}^\outside = \C \setminus A_{Q_{z,j}}^\outside = (4/3)B_{z,j}^\inside$. We let $\CF_{z,j}$ be the $\sigma$-algebra generated by the partial exploration $\Gamma_\outside^{*,B_{z,j}^\outside,B_{z,j}^\inside}$ as defined in Section~\ref{subsubsec:multichordal_cle}. Further, we let $\CF_{z,j,\alpha}$ be the $\sigma$-algebra generated by $\CF_{z,j}$ and the interior link pattern of $\Gamma_\inside^{*,B_{z,j}^\outside,B_{z,j}^\inside}$.

By Theorem~\ref{thm:cle_partially_explored}, the conditional law of the unexplored part $\Gamma_\inside^{*,B_{z,j}^\outside,B_{z,j}^\inside}$ given $\CF_{z,j}$ is that of a multichordal \clek{} in $(B_{z,j}^\outside)^{*,B_{z,j}^\inside}$ where the exterior link pattern is induced by $\Gamma_\outside^{*,B_{z,j}^\outside,B_{z,j}^\inside}$. Note that the exterior link pattern \emph{does} depend on the configuration of $\Gamma$ outside $B_{z,j}^\outside$. But if we condition on $\CF_{z,j,\alpha}$, then the conditional law of $\Gamma_\inside^{*,B_{z,j}^\outside,B_{z,j}^\inside}$ depends only on $(B_{z,j}^\outside)^{*,B_{z,j}^\inside}$, its marked boundary points, and the interior link pattern.

The results in~\cite{amy-cle-resampling} tell us that the conditional probabilities given $\CF_{z,j}$ can be uniformly controlled when the strands are ``separated''. We now give the definition of separation and recall the key lemma giving us the independence across scales property.

Let $\dsep > 0$ be some small number. Let $\Esep_{z,j} = \Esep_{z,j}(\dsep)$ denote the event that the marked points on the boundary of $(B_{z,j}^\outside)^{*,B_{z,j}^\inside}$ have distance at least $\dsep 2^{-j}$ from each other. We state the following variant of \cite[Lemma~4.4]{amy-cle-resampling} which follows from the exact same proof.

\begin{lemma}
\label{le:separation_event}
For any $q \in (0,1)$ and $b>1$ there exist $\dsep>0$ and $c>0$ such that the following is true. Let $z\in D$, $j_0 \in \N$ such that $B(z,2^{-j_0}) \subseteq D$. Then for each $k \in \N$ the probability that more than $q$ fraction of the events $(\Esep_{z,j})^c$ where $j = j_0+1,\ldots,j_0+k$ occur is at most $ce^{-bk}$.
\end{lemma}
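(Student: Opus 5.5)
The plan is to reduce the statement to a uniform one-scale estimate and then apply a Chernoff-type bound along the filtration $(\CF_{z,j})_j$. Since $B_{z,j}^\inside$ shrinks as $j$ grows, the partial explorations reveal more and more of $\Gamma$, so $\CF_{z,j-1} \subseteq \CF_{z,j}$. Moreover, $(\Esep_{z,j})^c$ is $\CF_{z,j}$-measurable. The key claim is the following.

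\emph{Claim.} For every $p>0$ there is $\dsep>0$ such that almost surely
\[ \p\bigl[(\Esep_{z,j})^c \mid \CF_{z,j-1}\bigr] \le p \quad\text{for all } j > j_0 . \]

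Given the claim, fix a set $J \subseteq \{j_0+1,\ldots,j_0+k\}$ with $\abs{J} \ge qk$. Peeling off conditional expectations one scale at a time gives $\p[(\Esep_{z,j})^c \text{ for all } j\in J] \le p^{\abs{J}}$. A union bound over the at most $2^k$ such sets bounds the probability in the lemma by $2^k p^{qk}$. It remains to choose $p$ with $2p^q \le e^{-b}$ and let $\dsep$ be the value the claim provides for this $p$.

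To prove the claim, I would condition on $\CF_{z,j-1}$. By Theorem~\ref{thm:cle_partially_explored}, the remaining configuration in $(B_{z,j-1}^\outside)^{*,B_{z,j-1}^\inside}$ is a multichordal \clek{} with some exterior link pattern. The annulus $A_{Q_{z,j}}$ and its neighbourhood lie inside $A_{Q_{z,j-1}}^\inside$, at distance of order $2^{-j}$ from $\partial B_{z,j-1}^\inside$. The event $(\Esep_{z,j})^c$ says that two marked points on $\partial (B_{z,j}^\outside)^{*,B_{z,j}^\inside}$, i.e.\ points where strands of $\Gamma$ first hit $B_{z,j}^\inside$, lie within $\dsep 2^{-j}$ of each other. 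This forces a multi-arm event for the strands in a ball of radius $\dsep 2^{-j}$ centred on $\partial B_{z,j}^\inside$. Namely, either two distinct strands, or one strand leaving and returning, come close. Up to scale $2^{-j}$ this costs at least three or four arms.

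Covering $\partial B_{z,j}^\inside$ by $O(\dsep^{-1})$ such balls, the claim reduces to two estimates. First, a conditional bound of order $\dsep^{\,\xi}$ for this arm event with some exponent $\xi>1$, so that the union over the $O(\dsep^{-1})$ balls is still small. Second, the number of strands crossing $A_{Q_{z,j}}$ has tails that are uniform in the conditioning, with all $o(1)$ terms uniform in the conditioning.

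The main obstacle is exactly this uniformity over the conditioning: the configuration at scale $j-1$ may itself be badly separated, with many strands nearly touching. I would deal with it in three steps.
\begin{itemize}
\item The multi-arm event at scale $j$ only involves the picture at macroscopic distance, relative to $2^{-j}$, from $\partial B_{z,j-1}^\inside$. So I would first apply the resampling argument of \cite[Section~5]{amy-cle-resampling}, as in the proof of Lemma~\ref{le:mcle_abs_cont}. With conditional probability bounded below uniformly in the conditioning, the chords can be rerouted near $\partial B_{z,j-1}^\inside$ without changing the configuration deep inside.
\item I would then use boundary and interior arm exponents for SLE, namely $\SLE_\kappa(\kappa-6)$ and multichordal \slek{}, to bound the arm event in the rerouted configuration.
\item To transfer the bound back, I would argue exactly as in \cite[Lemma~4.4]{amy-cle-resampling}. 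I would also give up on a uniform lower bound in the worst configurations by using the weaker statement that, whatever the conditioning, a good scale is reached within a bounded number of further scales. This changes only the constants in the exponential bound.
\end{itemize}
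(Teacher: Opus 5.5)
\textbf{Verdict: the reduction is sound, but the key Claim is unproved and your fallback does not give the lemma.}

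\textbf{What works.} Your reduction is correct. You have $\CF_{z,j-1}\subseteq\CF_{z,j}$ and $(\Esep_{z,j})^c\in\CF_{z,j}$. Iterated conditioning plus a union bound over at most $2^k$ index sets then turns an almost sure bound $\p[(\Esep_{z,j})^c\mid\CF_{z,j-1}]\le p$ into $2^kp^{qk}$.

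\textbf{The gap in the Claim.} Your Claim carries the entire content of the lemma, and your sketch does not prove it. (The paper gives no independent argument; it only says the statement follows from the proof of \cite[Lemma~4.4]{amy-cle-resampling}.) The Claim must hold for \emph{every} outcome of the scale-$(j-1)$ exploration. But $\CF_{z,j-1}$ puts no restriction on the boundary data: there may be arbitrarily many marked points on $\partial B^\inside_{z,j-1}$, arbitrarily close together. The tools you invoke are uniform only over $\dsep$-separated marked domains:
\begin{itemize}
\item Lemma~\ref{le:mcle_abs_cont} is stated for a fixed separation $\dsep$. Its uniformity comes from compactness in the Carath\'eodory topology, which is lost as marked points merge or their number grows.
\item The rerouting of \cite[Section~5]{amy-cle-resampling} has no success probability that is uniform in the number and spacing of the chords.
\end{itemize}
So ``with conditional probability bounded below uniformly in the conditioning, the chords can be rerouted'' is exactly what is unavailable. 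Likewise, the tail bound on the number of strands crossing $A_{Q_{z,j}}$, uniform in the conditioning, is asserted rather than proved. That estimate cannot be bypassed: by pigeonhole, more than $3/\dsep$ arcs in the scale-$j$ exploration already force $(\Esep_{z,j})^c$. For $\kappa\neq6$ there is no locality to supply it.

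\textbf{A smaller geometric slip.} $2Q_{z,j}$ shares edges with $B^\inside_{z,j-1}=(3/2)Q_{z,j-1}$, so $A_{Q_{z,j}}$ is not at distance of order $2^{-j}$ from $\partial B^\inside_{z,j-1}$; only $(3/2)Q_{z,j}$ is. A rerouting near $\partial B^\inside_{z,j-1}$ can therefore change which arcs reach $\C\setminus 2Q_{z,j}$, and hence the marked points at scale $j$.

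\textbf{Why the fallback fails.} Suppose that from an arbitrary configuration you only know a good scale is reached within $L$ further scales with conditional probability at least a fixed $c_0<1$. Then a single run of $qk$ consecutive bad scales can only be bounded by about $(1-c_0)^{qk/L}$. The best exponent you can reach is therefore about $q\log(1/(1-c_0))/L$, a fixed rate. The lemma, however, is stated for every $b>1$ and is used in that form, e.g.\ in Lemmas~\ref{lem:measure_has_density} and~\ref{le:good_superbox}. Making the rate arbitrary requires the return probability to be arbitrarily close to $1$ from every configuration, which is the uniform Claim again. So this is not merely a change of constants.

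\textbf{Circularity and what is missing.} ``Transfer the bound back exactly as in \cite[Lemma~4.4]{amy-cle-resampling}'' is circular, since the statement is a variant of that very lemma. What you need is an a priori estimate that holds uniformly over arbitrary, unseparated boundary data with unboundedly many marked points. It must control both:
\begin{itemize}
\item the number of strands reaching $(3/2)Q_{z,j}$ from $\partial B^\inside_{z,j-1}$, and
\item the probability that two landing points come within $\dsep 2^{-j}$ of each other, which should be of order $\dsep^{\xi}$ with $\xi>1$.
\end{itemize}
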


\section{Moments of the gasket measure}
\label{sec:moment_finiteness}

The purpose of this section is to prove the Propositions~\ref{pr:measure_ub} and~\ref{pr:measure_lb} which give us sharp bounds for the volume measure on the $\CLE_{\kappa}$ gasket in a small ball. As in Section~\ref{subsec:main_results}, we let $\kappa \in (4,8)$ be fixed and let $\dcle$ be as in~\eqref{eqn:cle_dim}. Let $\Gamma$ be a nested \clek{} in $D$. For each $\ell \in \N$, we let $\Upsilon_{\le\ell}$ be the union of the gaskets of $\Gamma$ of levels at most $\ell$. Let $\meas{\cdot}{\Upsilon_{\le\ell}}$ denote the sum of the volume measures from \cite{ms2022clemeasure} on each gasket of level at most $\ell$.

\subsection{Upper bound}

We start by proving the upper bound. Proposition~\ref{pr:measure_ub} is a consequence of the following result.

\begin{proposition}
\label{prop:cle_measure_moments}
Suppose that $\Gamma$ is a $\CLE_{\kappa}$ on a simply connected domain $D \subseteq \C$, and let $\meas{\cdot}{\Upsilon_{\le\ell}}$ be the volume measure on the union of its gaskets of levels at most $\ell$. For every $\ell \in \N$, $n \in \N$, $a,b > 0$, and $K \subseteq D$ compact, there is a constant $c>0$ and events $E_M$ with $\p[E_M] \ge 1-cM^{-b}$ for each $M > 1$ such that, for all $z \in K$ and $r > 0$ so that $B(z,2r) \subseteq D$, we have that
\[ \E[ \meas{B(z,r)}{\Upsilon_{\le\ell}}^n \one_{E_M}] \leq M r^{2+(n-1)\dcle -a} . \]
\end{proposition}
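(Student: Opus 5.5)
We argue by induction on $n$, starting from the known first moment bound. For $n=1$, the results of \cite{ms2022clemeasure} give $\E[\meas{K}{\Upsilon}]<\infty$. Conformal covariance under the scaling $z\mapsto r^{-1}(\cdot - z)$, combined with the local absolute continuity of Lemma~\ref{le:mcle_abs_cont} (comparing the gasket in $B(z,r)$ with the gasket of a \clek{} in a domain of unit size), gives $\E[\meas{B(z,r)}{\Upsilon_{\le\ell}}] \lesssim r^{\dcle}\p[\Upsilon\cap B(z,r)\neq\varnothing]$. The one-arm estimate $\p[\Upsilon_{\le \ell}\cap B(z,r)\ne\varnothing]\lesssim r^{2-\dcle-o(1)}$ then yields the claimed bound $r^{2-a}$, so for $n=1$ we may take $E_M$ to be the whole space.

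For the inductive step, write the $n$-th moment as
\[
\E\Bigl[\int_{B(z,r)^n}\mu(dx_1)\cdots\mu(dx_n)\one_{E_M}\Bigr],
\]
and decompose according to the dyadic scale $2^{-j}$ at which the points first separate. Concretely, for each $j\ge j_0$ with $2^{-j_0}\asymp r$, we bound the contribution of tuples in which $x_1$ lies in a square $Q\in\CS_j$ and some $x_i$ lies outside $4Q$ but within distance $2^{-j+2}$ of it. The contribution of such a configuration is at most
\[
\sum_{Q\in\CS_j,\,Q\subseteq B(z,r)}\E\bigl[\mu(2Q)\,\mu(B(z,r))^{n-1}\one_{E_M}\bigr],
\]
but this form is too crude. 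The better route is a tree/chaining decomposition: points cluster hierarchically, and each branching at scale $2^{-j}$ contributes a factor $2^{-j\dcle}$ times the number of boxes.

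To decouple the factors we use the independence across scales of Section~\ref{subsec:independence_across_scales}. Conditionally on $\CF_{x,j,\alpha}$, the configuration inside $B^\inside_{x,j}$ is a multichordal \clek{}. On the separation event $\Esep_{x,j}$, Lemma~\ref{le:mcle_abs_cont} together with conformal covariance bounds the conditional $m$-th moment of the mass near $x$ by $2^{-j(2+(m-1)\dcle)+o(j)}$ times the conditional probability that the gasket reaches the small box. This conditional probability is the multichordal one-arm probability. By the quasi-multiplicativity that the independence-across-scales argument provides, it is $\lesssim 2^{-(j-j')(2-\dcle)+o(j)}$ across scales $j'<j$.

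The main obstacle is the non-separated scales, on which Lemma~\ref{le:mcle_abs_cont} gives no uniform control. This is where the events $E_M$ enter. Let $E_M$ be the event that for every dyadic box of scale $2^{-j}$ meeting $K$, with $j\ge \log_2 M /C$, at most a $q$-fraction of the scales between $j_0$ and $j$ around it are non-separated. Lemma~\ref{le:separation_event}, with $b$ large, plus a union bound over the $\lesssim 2^{2j}$ boxes gives $\p[E_M^c]\lesssim M^{-b}$.

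On $E_M$, each non-separated scale costs at most a multiplicative constant. Choosing $q$ small relative to $a$, the $q$-fraction of such scales costs at most $2^{O(q)j}\le 2^{aj/2}$, which is absorbed into $r^{-a}$. The remaining coarse scales below $\log_2 M/C$ are absorbed into the constant $M$. Summing the geometric series over the hierarchical decomposition gives $M r^{2+(n-1)\dcle-a}$.

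The hardest technical point is that conditioning on $\one_{E_M}$ destroys the Markov structure. The fix is to replace $\one_{E_M}$ by products of local indicators $\one_{\Esep}$ along the relevant scales, which are $\CF$-measurable and compatible with the iteration.
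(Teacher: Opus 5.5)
\textbf{There is a genuine gap in the quantitative multichordal comparison.} Your architecture matches the paper in spirit: hierarchical clustering of the $n$ points, conditioning on partial explorations at separated scales, multichordal one-arm bounds, and an event $E_M$ built from Lemma~\ref{le:separation_event}. The paper encodes the clustering by the annulus covering Lemma~\ref{lem:annulus_lemma}, proves an $n$-point density bound on $E_M$, and integrates it over $B(z,r)^n$.

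The step carrying all the quantitative content fails as you state it. Lemma~\ref{le:mcle_abs_cont} is a purely qualitative $\delta$--$\varepsilon$ statement with no rate. So it cannot turn the \clek{} one-arm exponent $2-\dcle$ into the same exponent for a multichordal \clek{}. Nor can it transfer any moment bound. This affects your base case, and also your inductive step, which would need $m$-th moment bounds for the conditional multichordal law; those moments are themselves only controlled on a good event.

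What is needed is a domination with an explicit constant. For $\dsep$-separated marked points, a Markovian exploration of an ordinary \clek{} in $\D$, stopped before reaching $B(0,1/2)$, produces the marked domain with a fixed exterior link pattern $\beta_0$ with probability at least $p_0(\dsep)>0$. By Proposition~\ref{pr:link_probability}, each interior link pattern then has probability bounded below. Hence $\mccleexp{D;\ul{x};\alpha}[X]\lesssim_{\dsep}\E[X']$ for nonnegative functionals, where $X'$ is the corresponding quantity for the \clek{}. This is how the paper obtains Lemmas~\ref{lem:gasket_measure_first_moment_mcle} and~\ref{lem:ssw_gasket_exponent_mcle}, which are its only inputs under the multichordal law. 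The base case comes instead from the smooth first-moment density (Lemma~\ref{lem:gasket_measure_first_moment}).

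\textbf{Your bookkeeping also lets constants accumulate across scales.} You charge a constant only at non-separated scales, but every conditional comparison at a separated scale also costs a factor $C(\dsep)>1$. Since $\dsep$ must be small, $C(\dsep)$ is not close to $1$. Chaining ``quasi-multiplicatively'' scale by scale therefore loses $C(\dsep)^{\#\text{scales}}$, a power of $r$ that the $r^{-a}$ slack cannot absorb.

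The paper pays the constant only once per annulus. In each annulus $A(z_i,s_{i,k},r_{i,k})$ from the covering lemma it picks a single separated scale $2^{-j'}\ge r_{i,k}^{1+O(a)}$. Such a scale is guaranteed by the event that for every $z\in K$ and $j\ge j_0$ some $j'\in[(1-a)j,j)$ has $\Esep_{z,j'}$. It then applies the multichordal one-arm bound across the whole annulus in one go, so only $\sum_i n_i\le 2n$ constants appear. At the innermost balls it uses only a conditional first moment, which needs no good event. This removes both the induction on $n$ and any need to localize $E_M$.

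\textbf{$E_M$ must not depend on $z$ or $r$.} The statement requires this, so the base scale cannot be tied to $2^{-j_0}\asymp r$. It must be tied to $M$: take $j_0\asymp\log M$, with events uniform over $z\in K$.
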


To start to prove Proposition~\ref{prop:cle_measure_moments}, let us first give the first moment bound.

\begin{lemma}
\label{lem:gasket_measure_first_moment}
For each $\ell \in \N$ there exists a constant $c > 0$ so that the following is true.  Suppose that $\Gamma$ is a $\CLE_{\kappa}$ on a simply connected domain $D \subseteq \C$.  Then for every $z \in D$ and $r > 0$ so that $B(z,2r) \subseteq D$ we have that
\begin{equation}\label{eq:gasket_measure_first_moment}
 \E[ \meas{B(z,r)}{\Upsilon_{\le\ell}} ] \leq c r^2 (\dist(z,\partial D))^{\dcle-2}.
\end{equation}
\end{lemma}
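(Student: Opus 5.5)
The plan is to write the first moment as an integral of a density and use conformal covariance together with scaling to pin that density down. Throughout, $\meas{\cdot}{\Upsilon_{\le\ell}}$ is the sum of the measures on the finitely many levels $1,\dots,\ell$. It therefore suffices to treat each level separately and add up the $\ell$ resulting constants.

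Fix a level $m\le\ell$. The first key input from \cite{ms2022clemeasure} is that $A\mapsto\E[\mu(A;\Upsilon_{m})]$ is a locally finite measure on $D$. Conformal covariance, applied to the whole of $D$ (the case $U=D$), gives
\[
\E[\mu(\varphi(A);\Upsilon^{\wt D}_m)]=\int_A |\varphi'(w)|^{\dcle}\,\E[\mu(dw;\Upsilon^D_m)] .
\]
Applying this with rotations of $\D$ fixing $0$ shows that the expectation measure on $\D$ is rotation invariant. Applying it with the Möbius automorphisms of $\D$ shows that the measure has a density of the form $\rho_D(w)=c_m\,\mathrm{CR}(w,D)^{\dcle-2}$, where $\mathrm{CR}$ denotes the conformal radius. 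Indeed, $\mathrm{CR}(w,D)^{\dcle-2}\,dw$ transforms with exactly the factor $|\varphi'|^{\dcle-2}\cdot|\varphi'|^2$. The finiteness $\E[\mu(K)]<\infty$ guarantees $c_m<\infty$.

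Rather than proving absolute continuity abstractly, I would argue as follows. Write $\nu$ for the pushforward of the expectation measure on $\D$ under the automorphism group. The transformation rule says that the measure $\mathrm{CR}(w,\D)^{2-\dcle}\,\nu(dw)$ is invariant under $\mathrm{Aut}(\D)$. The only locally finite measure on $\D$ invariant under all of $\mathrm{Aut}(\D)$ is a multiple of the hyperbolic area measure, which has density $\propto \mathrm{CR}^{-2}$. Hence $\nu(dw)=c\,\mathrm{CR}^{\dcle-2}\cdot\mathrm{CR}^{-2}\cdot\mathrm{CR}^{2}\,dw$; I would double check this exponent bookkeeping carefully, but the outcome should be a density $c\,\mathrm{CR}(w,\D)^{\dcle-2}$. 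The same formula then transfers to a general $D$ by conformal covariance.

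Given the density formula, the bound follows from Koebe's quarter theorem. For $w\in B(z,r)$ with $B(z,2r)\subseteq D$ we have $\dist(w,\partial D)\ge \dist(z,\partial D)/2$. Koebe gives $\mathrm{CR}(w,D)\asymp \dist(w,\partial D)$, which is comparable to $\dist(z,\partial D)$ up to a factor of $2$. Since $\dcle-2<0$, the density on $B(z,r)$ is at most $c\,\dist(z,\partial D)^{\dcle-2}$. Integrating over $B(z,r)$ gives $\le c\pi r^2\dist(z,\partial D)^{\dcle-2}$.

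The main obstacle is the identification of the first-moment density. There are two concerns:
\begin{itemize}
\item the covariance statement must be valid for all of $D$ and not only for subdomains $V$ of $U^*$;
\item the invariance must hold simultaneously for all automorphisms, which requires a countable dense set of automorphisms plus continuity.
\end{itemize}
If that step is delicate, a fallback is to use the covariance only at the point $0$ of $\D$, together with translation and scaling. This yields $\E\mu(B(z,r))$ bounded by monotone comparison with a ball centered at the origin of a disk of radius $\dist(z,\partial D)$. The monotonicity in the domain would again come from the Markov property applied with $U$ a smaller disk.
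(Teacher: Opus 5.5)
\textbf{The gap.} The step that fails is the reduction to a single level $m\le\ell$, combined with your ``first key input'' that $A\mapsto\E[\meas{A}{\Upsilon_m}]$ is locally finite. The finiteness you recall from \cite{ms2022clemeasure} is for the gasket of a single non-nested \clek{}, i.e.\ level $1$. For $m\ge2$ it is not only unavailable but false. Your invariance argument still goes through formally, but it then applies to a measure that is infinite on every open set, so uniqueness of invariant Radon measures says nothing, and $c_m=+\infty$.

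To see the failure, condition on the outermost loops. The level-$2$ gaskets are then gaskets of independent \clek{}s in the interior components, so your level-$1$ formula and Tonelli give
\[
\E[\meas{B}{\Upsilon_{\le 2}}] \ge c\int_B \E\bigl[\mathrm{CR}(w,U_w)^{\dcle-2}\bigr]\,dw ,
\]
where $U_w$ is the interior component containing $w$ of the outermost loop surrounding $w$. By the conformal radius formula of \cite{ssw2009radii}, $\E[(\mathrm{CR}(w,U_w)/\mathrm{CR}(w,D))^{-\lambda}]$ is finite exactly for $\lambda<1-2/\kappa-3\kappa/32=2-\dcle$, and it blows up as $\lambda\uparrow 2-\dcle$. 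Hence the integrand is $+\infty$ for every $w$, and $\E[\meas{B(z,r)}{\Upsilon_{\le 2}}]=\infty$. Heuristically, there are $\asymp 2^{j\dcle}$ outermost loops of size $2^{-j}$ per unit area, each carrying an inner gasket of mass $\asymp 2^{-j\dcle}$, so every scale contributes order one.

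So the bound cannot hold for $\ell\ge2$ by any argument. The paper's proof has the same weak point: it attributes a smooth first-moment density to $\wt\Upsilon_{\le\ell}$, which is only possible for $\ell=1$. For higher levels, estimates have to be made gasket by gasket rather than for the summed measure.

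\textbf{The case $\ell=1$.} Here your argument is correct and takes a different route from the paper's, after one fix to the exponent bookkeeping. Since $\mathrm{CR}(\varphi(w),\D)=|\varphi'(w)|\,\mathrm{CR}(w,\D)$ for $\varphi\in\mathrm{Aut}(\D)$, the invariant measure is $\mathrm{CR}(w,\D)^{-\dcle}\nu(dw)$, not $\mathrm{CR}^{2-\dcle}\nu$. Uniqueness of the invariant Radon measure (hyperbolic area, density $\propto\mathrm{CR}^{-2}$) then gives $\nu=c\,\mathrm{CR}^{\dcle-2}\,dw$, as you expected.

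Neither of your two listed concerns is a real obstacle. Covariance under automorphisms, and indeed between different domains, is part of \cite[Theorem~1.2]{ms2022clemeasure}. Moreover, for each fixed $\varphi$ it is an identity between deterministic expectation measures, so no simultaneous almost-sure statement over all automorphisms is needed.

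\textbf{Comparison with the paper.} The paper takes the existence of a smooth density on $\D$ as given and uses only its boundedness near $0$ to get $\E[\meas{B(0,s)}{\wt\Upsilon}]\le c_1 s^2$. It then transports this to $D$ via $\varphi\colon\D\to D$ with $\varphi(0)=z$, using $|\varphi'|\asymp\dist(z,\partial D)$ on $B(0,1/2)$. Your route produces the explicit density $c\,\mathrm{CR}(w,D)^{\dcle-2}$. The bound then follows in one line from $\mathrm{CR}(w,D)\ge\dist(w,\partial D)\ge\dist(z,\partial D)/2$, directly for every $r$ with $B(z,2r)\subseteq D$; the paper's version first gives the bound for $B(z,c_0r)$ and needs a rescaling.
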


We emphasize that in the statement of Lemma~\ref{lem:gasket_measure_first_moment} the constant $c$ does not depend on $D$, $z$, or $r$.

\begin{proof}[Proof of Lemma~\ref{lem:gasket_measure_first_moment}]
Suppose that $\wt{\Gamma}$ is a $\CLE_{\kappa}$ on $\D$ and $\meas{\cdot}{\wt{\Upsilon}_{\le\ell}}$ is the corresponding measure on its gaskets $\wt{\Upsilon}_{\le\ell}$.  Then it is shown in \cite[Lemma~5.1]{ms2022clemeasure} that the measure $\E[ \meas{\cdot}{\wt{\Upsilon}_{\le\ell}}]$ has a smooth density with respect to Lebesgue measure.  In particular, there exists a constant $c_1 > 0$ so that
\begin{equation}
\label{eqn:measure_b_0_s_bound}
\E[ \meas{B(0,s)}{\wt{\Upsilon}_{\le\ell}} ] \leq c_1 s^2.
\end{equation}
We are going to deduce~\eqref{eq:gasket_measure_first_moment} from~\eqref{eqn:measure_b_0_s_bound} and conformal covariance.

Let $\varphi \colon \D \to D$ be the unique conformal transformation with $\varphi(0) = z$ and $\varphi'(0) > 0$.  By the distortion estimates for conformal maps, we have that
\begin{equation}
\label{eqn:varphi_deriv_bound}
|\varphi'(w)| \asymp |\varphi'(0)| \asymp \dist(z,\partial D) \quad\text{for all}\quad w \in B(0,1/2) .
\end{equation}
Further, there exists $c_0 > 0$ such that if $r < \frac{1}{2}\dist(z,\partial D)$ and $s = r / \dist(z, \partial D)$ we have that $B(z,c_0 r) \subseteq \varphi(B(0,s))$.

Let $\Gamma$ be the \clek{} in $D$ given by $\Gamma = \varphi(\wt{\Gamma})$.  Then we have by \cite[Theorem~1.2]{ms2022clemeasure} that $d\meas{\cdot}{\Upsilon_{\le\ell}} = |\varphi'(\cdot)|^{\dcle} d \meas{\cdot}{\wt{\Upsilon}_{\le\ell}} \circ \varphi^{-1}$, consequently
\begin{equation}
\label{eqn:mu_b_z_r_ubd}
\meas{B(z,c_0 r)}{\Upsilon_{\le\ell}} \leq \meas{\varphi(B(0,s))}{\Upsilon_{\le\ell}} = \int_{B(0,s)} |\varphi'(w)|^{\dcle} \meas{dw}{\wt{\Upsilon}_{\le\ell}}.
\end{equation}
Inserting~\eqref{eqn:varphi_deriv_bound} into~\eqref{eqn:mu_b_z_r_ubd} and then taking an expectation and applying~\eqref{eqn:measure_b_0_s_bound} we see for a constant $c_2 > 0$ that
\begin{equation}
\label{eqn:expectation_measure_first_bound}
\E[ \meas{B(z,c_0 r)}{\Upsilon_{\le\ell}}] \leq c_2 \left( \frac{r}{\dist(z,\partial D)} \right)^2 \dist(z, \partial D)^{\dcle} = c_2 r^2 (\dist(z,\partial D))^{\dcle-2},
\end{equation}
which completes the proof.
\end{proof}

We will also need the result from \cite{ssw2009radii} which gives the exponent for the probability that the $\CLE_{\kappa}$ gasket intersects a given ball.

\begin{lemma}[{\cite[Theorem~2]{ssw2009radii}}]
\label{lem:ssw_gasket_exponent}
For each $\ell \in \N$ and $a>0$ there exists a constant $c > 0$ so that the following is true.  Suppose that $\Gamma$ is a $\CLE_{\kappa}$ on a simply connected domain $D \subseteq \C$.  Let $z \in D$ and $r > 0$ so that $B(z,2r) \subseteq D$.  Then
\[ \p[ \Upsilon_{\le\ell} \cap B(z,r) \neq \emptyset ] \le c\left(\frac{r}{\dist(z,\partial D)}\right)^{2-\dcle-a} . \]
\end{lemma}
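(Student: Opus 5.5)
The plan is to reduce to the one-point estimate of \cite{ssw2009radii} in the unit disk, using conformal invariance of \clek{} and the Koebe distortion theorem, and then to handle the levels $\le \ell$ by iterating over nested loops.

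\emph{Reduction to $\D$.} Let $\varphi\colon D \to \D$ be the conformal map with $\varphi(z)=0$. Since $B(z,2r)\subseteq D$, we have $r \le \dist(z,\partial D)/2$. By the Koebe distortion theorem (as in~\eqref{eqn:varphi_deriv_bound}), $\abs{\varphi'(w)} \asymp \dist(z,\partial D)^{-1}$ on $B(z,r)$. Hence $\varphi(B(z,r)) \subseteq B(0,\varepsilon)$ with $\varepsilon = C r/\dist(z,\partial D)$ for a universal constant $C$. By conformal invariance, $\varphi(\Gamma)$ is a nested \clek{} in $\D$, and $\varphi$ maps the level-$j$ gaskets of $\Gamma$ to those of $\varphi(\Gamma)$. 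It therefore suffices to show
\[
\p[\Upsilon_{\le \ell}\cap B(0,\varepsilon)\neq\varnothing]\le c\,\varepsilon^{2-\dcle-a}
\]
for a nested \clek{} in $\D$. If $\varepsilon \ge 1/2$, this is trivial after adjusting $c$.

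\emph{One level.} Let $\CL_1,\CL_2,\dots$ be the successive nested loops surrounding $0$. Let $U_j$ be the connected component containing $0$ of the set of points inside $\CL_j$, with $U_0=\D$, and let $R_j = \mathrm{CR}(0,U_j)$. If a gasket of level $j\le \ell$ intersects $B(0,\varepsilon)$, then that gasket lies in $\ol{U_{j-1}}\setminus U_j$, so $\dist(0,\partial U_j)<\varepsilon$. Koebe's quarter theorem then gives $R_j \le 4\varepsilon$, and since the $U_j$ are nested, $R_\ell\le R_j \le 4\varepsilon$. So the event is contained in $\{-\log R_\ell \ge \log(1/(4\varepsilon))\}$.

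\emph{Tail of $-\log R_\ell$.} By the renewal structure of nested CLE and conformal invariance, $-\log R_\ell = \sum_{j=1}^\ell T_j$ with $T_j = -\log(R_j/R_{j-1})$ i.i.d. By \cite[Theorem~2]{ssw2009radii}, which gives the explicit Laplace transform of $T_1$, the tail satisfies $\p[T_1\ge t]\le c_a e^{-(2-\dcle-a)t}$. The decay rate $2-\dcle = (8-\kappa)(3\kappa-8)/(32\kappa)$ is the first singularity of that Laplace transform. For the sum of $\ell$ i.i.d. terms, a Chernoff bound with exponent $\lambda = 2-\dcle-a/2$ gives
\[
\p\Bigl[\textstyle\sum_{j\le\ell} T_j \ge t\Bigr]\le \E[e^{\lambda T_1}]^\ell e^{-\lambda t}.
\]
Here $\E[e^{\lambda T_1}]$ is finite because $\lambda$ is strictly below the critical rate. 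Taking $t=\log(1/(4\varepsilon))$ yields the claimed bound with the loss $a$ absorbed.

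\emph{Main obstacle.} The delicate point is correctly identifying the SSW exponent. Their result concerns the conformal radius of the loop surrounding $0$, and one must check that its tail rate is exactly $2-\dcle$, that is, that the Laplace transform $\E[e^{\lambda T_1}]$ blows up precisely at $\lambda = 2-\dcle$. Everything else (Koebe, conformal invariance, Chernoff) is routine.
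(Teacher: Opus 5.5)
Your argument is correct and is essentially the paper's. Both reduce the event to the conformal radius of the level-$\ell$ loop around $z$ being $O(r/\dist(z,\partial D))$, and both use that the ratios of conformal radii across levels are i.i.d.\ with SSW exponential tail rate $2-\dcle$. The paper aggregates the $\ell$ levels by a union bound over $O(\log(\dist(z,\partial D)/r)^\ell)$ dyadic choices of the per-level ratios, absorbing the logarithm into $a$; your Chernoff bound via the Laplace transform does the same in one step with no logarithmic loss.

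One small correction: a level-$j$ gasket meeting $B(0,\varepsilon)$ need not lie in $\ol{U_{j-1}}$, since it may sit inside a different level-$(j-1)$ loop that passes near $0$. Nevertheless every gasket of level $\le \ell$ is disjoint from the open set $U_\ell$, which is all you need to conclude $R_\ell \le 4\varepsilon$.
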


\begin{proof}
 The statement for the outermost gasket follows immediately from \cite[Theorem~2]{ssw2009radii}. For general $\ell$, we can consider dyadic radii $r_1,\ldots,r_\ell$ with $r_1 \ldots r_\ell = r$ and successively apply the estimate for the probability that the $i$th level loop of $\Gamma$ around $z$ has conformal radius in $[r_i,2r_i]$ for each $i=1,\ldots,\ell$. The final estimate follows by a union bound over at most $\log(\dist(z,\partial D)/r)^\ell$ choices of $r_1,\ldots,r_\ell$.
\end{proof}

We now generalize the Lemmas~\ref{lem:gasket_measure_first_moment} and~\ref{lem:ssw_gasket_exponent} to the case of a multichordal \clek{} (see Section~\ref{subsubsec:multichordal_cle}). Suppose in the following that $(D;\ul{x};\alpha) \in \ildomain{2N}$, let $(\ul{\eta},\Gamma)$ be sampled from the law~$\mcclelaw{D;\ul{x};\alpha}$, i.e.\ the multichordal \clek{} in $(D;\ul{x})$ conditionally on the interior link pattern $\alpha$. Let $\Upsilon_{\le\ell}$ be the set of points that are connected to $\partial D$ by a path crossing at most $\ell$ loops of $\Gamma$, in other words the union of the gaskets of levels at most $\ell$ for each of the \clek{} in the complementary connected components of $\ul{\eta}$.

\begin{lemma}
\label{lem:gasket_measure_first_moment_mcle}
For each $\ell \in \N$, $N \in \N$, and $\dsep > 0$ there exists a constant $c > 0$ so that the following is true.  Suppose that $(\ul{\eta},\Gamma)$ is sampled from~$\mcclelaw{D;\ul{x};\alpha}$ where $(D;\ul{x};\alpha) \in \ildomain{2N}$. Let $z \in D$ and $r > 0$ so that $B(z,2r) \subseteq D$, and suppose that the harmonic measure in $D$ seen from $z$ of each of the boundary arcs between the marked points is at least $\dsep$.  Then
\[
 \mccleexp{D;\ul{x};\alpha}[ \meas{B(z,r)}{\Upsilon_{\le\ell}} ] \leq c r^2 (\dist(z,\partial D))^{\dcle-2}.
\]
\end{lemma}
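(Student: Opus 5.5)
By conformal invariance of the multichordal law and conformal covariance of the measure, we may reduce to the unit disk, exactly as in the proof of Lemma~\ref{lem:gasket_measure_first_moment}. Let $\varphi\colon\D\to D$ be the conformal map with $\varphi(0)=z$ and $\varphi'(0)>0$. Then $\varphi^{-1}(\ul{\eta},\Gamma)$ has law $\mcclelaw{\D;\varphi^{-1}(\ul{x});\alpha}$. The harmonic measure assumption says that the marked points $\varphi^{-1}(\ul{x})$ cut $\partial\D$ into arcs of length at least $2\pi\dsep$. Hence distinct marked points are at Euclidean distance bounded below by a constant depending only on $\dsep$. The measure on each gasket in a complementary component of $\ul{\eta}$ transforms with $|\varphi'|^{\dcle}$. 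This follows from the conformal covariance in \cite[Theorem~1.2]{ms2022clemeasure}, applied to the \clek{} in that component. Using \eqref{eqn:varphi_deriv_bound} and $B(z,c_0r)\subseteq\varphi(B(0,s))$ with $s=r/\dist(z,\partial D)$, it suffices to prove
\[ \mccleexp{\D;\ul{y};\alpha}[\meas{B(0,s)}{\Upsilon_{\le\ell}}] \le c s^2 \]
uniformly over configurations $\ul{y}$ whose marked points are $\dsep'$-separated and all $s\le 1/2$. The case of general $r<\dist(z,\partial D)/2$ is recovered from $B(z,r)$ by covering it with boundedly many balls $B(z',c_0r)$, each satisfying the same hypotheses up to changing constants.

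To prove this bound in $\D$, we compare with the ordinary \clek{}. The point is that \eqref{eqn:measure_b_0_s_bound} says $\E[\meas{\cdot}{\wt\Upsilon_{\le\ell}}]$ has a bounded density near $0$. We want the same for the multichordal law, uniformly in $\ul{y}$. The plan is to split
\[ \meas{B(0,s)}{\Upsilon_{\le\ell}}=\sum_{w}\meas{B(w,s')}{\Upsilon_{\le\ell}} \]
over a grid of small balls, and to control each summand in expectation by a Radon--Nikodym type argument. Lemma~\ref{le:mcle_abs_cont} gives mutual absolute continuity uniformly over $\dsep$-separated configurations on $K=\ol{B(0,1/2)}$. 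However, it is qualitative: small probabilities map to small probabilities, which is not a bounded density. To turn it into an expectation bound, we use the resampling construction from its proof. With conditional probability at least $p_0>0$, uniformly over separated configurations by Proposition~\ref{pr:link_probability}, continuity, and compactness, we can resample $\ul{\eta}$ in $\D\setminus K$. The resampled chords avoid $K$, and the gasket in $K$ is preserved up to components that become enclosed by loops, which only decreases the measure. After this, the configuration in $K$ is that of a nested \clek{} in a random domain $D'$ with $K\subseteq D'\subseteq\D$. Levels may also shift, but this only requires replacing $\ell$ by $\ell+1$, since the gaskets in $K$ are still among the gaskets of levels at most $\ell+1$. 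The measure is locally determined, because it is a measurable function of the loops near $K$, so we get
\[ p_0\,\mccleexp{\D;\ul{y};\alpha}[\meas{B(0,s)}{\Upsilon_{\le\ell}}]\le \sup_{K\subseteq D'\subseteq\D}\E_{D'}[\meas{B(0,s)}{\Upsilon_{\le\ell+1}}]. \]
By Lemma~\ref{lem:gasket_measure_first_moment}, the right-hand side is at most $cs^2$, since $\dist(0,\partial D')\ge 1/2$ and $\dcle-2<0$.

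The main obstacle is the lower bound $p_0$ on the resampling probability. We need it uniform both over the separated marked configurations and over the conditioned chords $\ul{\eta}$, which may come arbitrarily close to $K$ or wind around. Our approach is to first resample only in an annulus $A=B(0,3/4)\setminus K$, using a two-scale version of \cite[Section~5.2]{amy-cle-resampling}. On the event, which has probability bounded below, that the chords cross $A$ in a separated way, the resampling has probability uniformly bounded below. Off this event we would argue by the separation machinery, namely Lemma~\ref{le:separation_event} applied across scales between $s$ and $1$. Summing the geometric tail over the scales where separation first occurs then gives the uniform constant $c$.
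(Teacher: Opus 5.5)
Your reduction to $\D$ and the final appeal to Lemma~\ref{lem:gasket_measure_first_moment} are fine. The central inequality
\[ p_0\,\mccleexp{\D;\ul{y};\alpha}[\meas{B(0,s)}{\Upsilon_{\le\ell}}]\le \sup_{K\subseteq D'\subseteq\D}\E_{D'}[\meas{B(0,s)}{\Upsilon_{\le\ell+1}}] \]
is not justified, and this is a genuine gap. It needs the conditional probability given $(\ul{\eta},\Gamma)$ of a good resampling in $\D\setminus K$ to be at least $p_0$ almost surely. The proof of Lemma~\ref{le:mcle_abs_cont} only shows that this probability is positive. Proposition~\ref{pr:link_probability}, continuity and compactness cannot make it uniform, because they control only the marked boundary points. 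This conditional probability instead depends on the realized configuration near $K$, which ranges over a non-compact family: chords may cross $B(0,3/4)\setminus K$ arbitrarily often, or strands may be pinched arbitrarily close to each other or to $K$. On such configurations the probability of reconnecting all strands in $K$ outside $K$ while preserving the gasket degenerates. Since the measure could a priori concentrate on exactly these configurations, a lower bound on the probability of a good configuration does not bound the expectation.

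Your fallback does not close the gap. Lemma~\ref{le:separation_event} only concerns separation of the marked points where the outside exploration first reaches a smaller ball. It gives no control on the chords inside that ball, so the same obstruction recurs at every scale. Moreover, if the resampling succeeds only at scale $2^{-j}$, Lemma~\ref{lem:gasket_measure_first_moment} gives only $O(s^2 2^{j(2-\dcle)})$. To sum over $j$ you would have to decouple the failure events at coarser scales from the measure, e.g.\ via H\"older with higher moments. In the paper those moments (Proposition~\ref{prop:cle_measure_moments}) are deduced from this very lemma, so the argument would be circular.

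The paper avoids conditioning on the interior by going in the opposite direction, in two steps.
\begin{enumerate}[(1)]
\item Take the exterior pattern $\beta_0=\{\{1,2\},\ldots,\{2N-1,2N\}\}$. Proposition~\ref{pr:link_probability} gives $\mcclelaw{\D;\ul{x};\beta_0}[\alpha(\ul{\eta})=\alpha]\ge c(\dsep)>0$. Hence, by the conditioning property of the multichordal law, $\mccleexp{\D;\ul{x};\alpha}[X]\le c(\dsep)^{-1}\mccleexp{\D;\ul{x};\beta_0}[X]$ for nonnegative $X$.
\item By \cite[Proposition~1.9, Lemma~3.10, Proposition~3.15]{amy-cle-resampling} there is a Markovian exploration of an ordinary \clek{} $\wt{\Gamma}$ in $\D$, stopped before any strand hits $B(0,1/2)$. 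With probability at least $p_0(\dsep)$, uniformly over $\dsep$-separated $\ul{x}$, its unexplored marked domain is conformally equivalent to $(\D;\ul{x};\beta_0)$. This is a statement about boundary data only.
\end{enumerate}
Conditionally on the exploration, the remainder is the corresponding multichordal \clek{}. Restricting the \clek{} expectation to this event therefore gives
\[ \mccleexp{\D;\ul{x};\beta_0}[\meas{B(0,r)}{\Upsilon_{\le\ell}}]\le p_0^{-1}\E_{\D}[\meas{B(0,2r)}{\wt{\Upsilon}_{\le\ell+1}}]\le c\,p_0^{-1}r^2 . \]
Embedding the multichordal law inside a \clek{} with probability bounded below yields the expectation bound directly. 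Converting a multichordal sample into a \clek{} sample by resampling instead requires a uniform conditional probability, which is not available.
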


\begin{proof}
As in the proof of Lemma~\ref{lem:gasket_measure_first_moment}, we can assume $D = \D$, $z = 0$ due to the conformal covariance of the gasket measure.

Consider the multichordal \clek{} law~$\mcclelaw{\D;\ul{x};\beta_0}$ where $\beta_0$ is the \emph{exterior} link pattern given by $\beta_0 = \{\{1,2\},\{3,4\},\ldots\{2N-1,2N\}\}$. By Proposition~\ref{pr:link_probability}, the probability under~$\mcclelaw{\D;\ul{x};\beta_0}$ that $\ul{\eta}$ assumes any given interior link pattern $\alpha$ is bounded below by a positive constant depending only on $\dsep$. Therefore it suffices to prove the result for~$\mcclelaw{\D;\ul{x};\beta_0}$.

By \cite[Proposition~1.9]{amy-cle-resampling} we can obtain a multichordal \clek{} law from the remainder of exploring parts of the boundary intersecting loops of a \clek{}. Concretely, suppose that $\wt{\Gamma}$ is a \clek{} in $\D$. For each $\dsep > 0$, by \cite[Proposition~1.9, Lemma~3.10, and Proposition~3.15]{amy-cle-resampling} there is a constant $p_0 > 0$ such that the following is true. For each $(\D;\ul{x}) \in \markeddomain{2N}$ where $\min_{i \neq j} \abs{x_i-x_j} \ge \dsep$ there is a Markovian exploration of $\wt{\Gamma}$ that stops before any of the strands hit $B(0,1/2)$ and such that the probability that the resulting marked domain is conformally equivalent to $(\D;\ul{x};\beta_0)$ is at least $p_0$. Combining this with Lemma~\ref{lem:gasket_measure_first_moment} shows that
\[ \mccleexp{\D;\ul{x};\beta_0}[ \meas{B(z,r)}{\Upsilon_{\le\ell}} ] \le p_0^{-1}\E_{\D}[ \meas{B(z,2r)}{\wt{\Upsilon}_{\le\ell+1}} ] \le cp_0^{-1}r^2 . \]
\end{proof}

\begin{lemma}
\label{lem:ssw_gasket_exponent_mcle}
For each $\ell \in \N$, $N \in \N$, $\dsep > 0$, and $a>0$ there exists a constant $c > 0$ so that the following is true. Suppose that $(\ul{\eta},\Gamma)$ is sampled from~$\mcclelaw{D;\ul{x};\alpha}$ where $(D;\ul{x};\alpha) \in \ildomain{2N}$. Let $z \in D$ and $r > 0$ so that $B(z,2r) \subseteq D$, and suppose that the harmonic measure in $D$ seen from $z$ of each of the boundary arcs between the marked points is at least $\dsep$.  Then
\[ \mcclelaw{D;\ul{x};\alpha}[ \Upsilon_{\le\ell} \cap B(z,r) \neq \emptyset ] \le c\left(\frac{r}{\dist(z,\partial D)}\right)^{2-\dcle-a} . \]
\end{lemma}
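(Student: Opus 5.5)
The plan is to copy the proof of Lemma~\ref{lem:gasket_measure_first_moment_mcle}, replacing the first-moment bound of Lemma~\ref{lem:gasket_measure_first_moment} by the one-point estimate of Lemma~\ref{lem:ssw_gasket_exponent}. The event $\{\Upsilon_{\le\ell} \cap B(z,r) \neq \emptyset\}$ is determined by the configuration, and it transforms naturally under conformal maps. So we first reduce to $D = \D$ and $z = 0$. Let $\varphi\colon \D \to D$ be the conformal map with $\varphi(0) = z$, and let $s = r/\dist(z,\partial D)$. By the Koebe distortion estimate~\eqref{eqn:varphi_deriv_bound}, we have $\varphi^{-1}(B(z,r)) \subseteq B(0,Cs)$ for a universal constant $C$ as long as $s$ is small. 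When $s$ is bounded below there is nothing to prove, since we can enlarge $c$. The harmonic-measure separation hypothesis is conformally invariant. It implies that the marked points $\varphi^{-1}(\ul{x})$ on $\partial\D$ are pairwise at distance at least some $\dsep' > 0$ depending only on $\dsep$ and $N$. Moreover, the law $\mcclelaw{D;\ul{x};\alpha}$ is the pushforward of $\mcclelaw{\D;\varphi^{-1}(\ul{x});\alpha}$, and $\Upsilon_{\le\ell}$ is mapped to the corresponding set. It therefore suffices to show that $\mcclelaw{\D;\ul{x};\alpha}[\Upsilon_{\le\ell} \cap B(0,s) \neq \emptyset] \le c s^{2-\dcle-a}$, uniformly over marked points with pairwise separation at least $\dsep'$.

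Next, we pass from the interior link pattern $\alpha$ to the exterior pattern $\beta_0 = \{\{1,2\},\ldots,\{2N-1,2N\}\}$. Proposition~\ref{pr:link_probability} gives $\mcclelaw{\D;\ul{x};\beta_0}[\alpha(\ul{\eta}) = \alpha] \ge p_1 > 0$. The bound $p_1$ is uniform over separated configurations because the probability depends continuously on $\ul{x}$ and the set of $\dsep'$-separated configurations is compact up to rotation. Conditioning on the link pattern therefore costs at most a factor $p_1^{-1}$:
\[ \mcclelaw{\D;\ul{x};\alpha}[\,\cdot\,] \le p_1^{-1}\,\mcclelaw{\D;\ul{x};\beta_0}[\,\cdot\,] . \]

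Then we realize $\mcclelaw{\D;\ul{x};\beta_0}$ inside an ordinary \clek{} $\wt{\Gamma}$ in $\D$. As in the proof of Lemma~\ref{lem:gasket_measure_first_moment_mcle}, we use \cite[Proposition~1.9, Lemma~3.10, Proposition~3.15]{amy-cle-resampling}. These give a Markovian exploration of $\wt{\Gamma}$ that stops before reaching $B(0,1/2)$, such that with probability at least $p_0$ the unexplored domain $\D'$ together with its marked points is conformally equivalent to $(\D;\ul{x};\beta_0)$. On this event, let $\psi\colon \D \to \D'$ be the conformal map with $\psi(0) = 0$. Since $B(0,1/2) \subseteq \D'$, distortion gives $|\psi'| \asymp 1$ near $0$, so $\psi(B(0,s)) \subseteq B(0,C's)$. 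By the domain Markov property of the exploration, the remaining configuration is $\psi$ applied to a sample of $\mcclelaw{\D;\ul{x};\beta_0}$.

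The main point to check is that the multichordal gasket $\Upsilon_{\le\ell}$ lies inside $\wt{\Upsilon}_{\le\ell+1}$. A point that is connected to $\partial\D'$ by a path crossing at most $\ell$ loops in the multichordal picture can be joined to $\partial\D$ by crossing at most one additional explored loop. This is exactly the same level shift as in Lemma~\ref{lem:gasket_measure_first_moment_mcle}. Combining the three steps with Lemma~\ref{lem:ssw_gasket_exponent} applied with $\ell+1$, we get
\[ \mcclelaw{\D;\ul{x};\alpha}[\Upsilon_{\le\ell} \cap B(0,s) \neq \emptyset] \le p_1^{-1}p_0^{-1}\,\p[\wt{\Upsilon}_{\le\ell+1} \cap B(0,C's) \neq \emptyset] \le c\, s^{2-\dcle-a} , \]
which is the claim. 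The only delicate part is this containment of gaskets, together with the uniformity of $p_0$ and $p_1$ in $\dsep$.
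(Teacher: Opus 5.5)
Your proposal is correct and is essentially the paper's own argument. The paper proves this lemma by repeating the proof of Lemma~\ref{lem:gasket_measure_first_moment_mcle}: reduce to $(\D,0)$, pass from $\alpha$ to $\beta_0$ via Proposition~\ref{pr:link_probability}, then embed into an ordinary \clek{} via the exploration of \cite{amy-cle-resampling} with the level shift $\ell \to \ell+1$. The only change is that Lemma~\ref{lem:ssw_gasket_exponent} replaces Lemma~\ref{lem:gasket_measure_first_moment}, which is exactly what you do.
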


\begin{proof}
This follows from Lemma~\ref{lem:ssw_gasket_exponent} by the exact same argument used to deduce Lemma~\ref{lem:gasket_measure_first_moment_mcle} from Lemma~\ref{lem:gasket_measure_first_moment}.
\end{proof}

\begin{lemma}\label{le:mu_no_atoms}
 Suppose that $\Gamma$ is a $\CLE_{\kappa}$ on a simply connected domain $D \subseteq \C$, let $\Upsilon$ be its outermost gasket, and let $\meas{\cdot}{\Upsilon}$ be the volume measure on $\Upsilon$. Then almost surely, for each $x \in \Upsilon$ we have $\meas{\{x\}}{\Upsilon} = 0$.
\end{lemma}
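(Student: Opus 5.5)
We will show that almost surely $\sup_{x \in \Upsilon \cap K} \meas{\BE(x,r)}{\Upsilon} \to 0$ as $r \to 0$ for each compact $K \subseteq D$. This immediately rules out atoms. The tools are the first moment bound of Lemma~\ref{lem:gasket_measure_first_moment}, the gasket-hitting exponent of Lemma~\ref{lem:ssw_gasket_exponent}, and a second-moment-type control obtained via the multichordal setup.

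Fix $K$ and cover it by the dyadic squares $Q \in \CS_j$. Every $x \in \Upsilon \cap K$ lies in some $Q$ with $Q \cap \Upsilon \neq \varnothing$, and then $\BE(x,2^{-j}) \subseteq 3Q$. So it suffices to show that
\[
\max_{Q \in \CS_j,\, Q \cap K \neq \varnothing} \meas{3Q}{\Upsilon} \to 0
\]
almost surely. A direct first moment union bound fails: it gives only $\sum_Q \E[\meas{3Q}{\Upsilon}] \asymp 1$, which is not summable. We therefore use the conditional structure instead. Given $Q \cap \Upsilon \neq \varnothing$, the measure $\meas{3Q}{\Upsilon}$ should be of order $2^{-j\dcle}$, with good tails. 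To make this precise, we condition on the partial exploration $\CF_{z,j'}$ at a slightly coarser scale $j' = j - m$ (with $z$ the center of $Q$). On the separation event $\Esep_{z,j'}$, Theorem~\ref{thm:cle_partially_explored} together with Lemma~\ref{lem:gasket_measure_first_moment_mcle} gives
\[
\E[\meas{3Q}{\Upsilon} \mid \CF_{z,j'}] \lesssim 2^{-2j} 2^{-j'(\dcle-2)} = 2^{-j\dcle} 2^{m(2-\dcle)}.
\]
By Lemma~\ref{le:separation_event}, separation occurs at some scale among $j-m,\ldots,j-1$ except on an event of probability $e^{-bm}$. Combining this with Lemma~\ref{lem:ssw_gasket_exponent_mcle} and choosing $m = \epsilon j$, we obtain
\[
\p\bigl[\meas{3Q}{\Upsilon} > 2^{-j(\dcle-a)}\bigr] \lesssim 2^{-2j+\epsilon' j}
\]
for each $Q$, with $\epsilon'$ small. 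A union bound over the $\asymp 2^{2j}$ squares is then just barely insufficient.

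To gain the missing factor, we use a higher moment. The cleanest route is to prove the upper bound $\E[\meas{3Q}{\Upsilon}^2 \mathbf 1_{G}] \lesssim 2^{-j(2+\dcle) + aj}$ on a good event $G$. This follows by writing $\mu^2$ as a double integral and splitting according to the scale $2^{-i}$ of $|u-v|$. For each pair at distance about $2^{-i}$, we condition on the exploration around $u$ at scale $i$ and apply Lemma~\ref{lem:gasket_measure_first_moment_mcle} twice. Each scale contributes about $2^{-2j}2^{-2i}2^{-i(\dcle-2)}\cdot 2^{-(j-i)\dcle}\cdots$; the bookkeeping is what must be checked. Markov's inequality with the second moment then gives
\[
\p\bigl[\meas{3Q}{\Upsilon} > 2^{-j(\dcle-a)}\bigr] \le 2^{-j(2+\dcle)+aj} \, 2^{2j(\dcle-a)} = 2^{-2j + j(\dcle - a)},
\]
which is still not summable over $2^{2j}$ squares. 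This shows that fixed moments alone do not suffice, and that one needs all moments $n$, exactly in the form of Proposition~\ref{prop:cle_measure_moments}. Since that proposition is stated earlier in the paper, we simply invoke it. It gives $\E[\meas{B(z,r)}{\Upsilon}^n \one_{E_M}] \le M r^{2+(n-1)\dcle - a}$. Markov's inequality at level $r^{\dcle - a'}$ yields probability at most $M r^{2 + (n-1)\dcle - a - n(\dcle - a')} = M r^{2 - \dcle - a + n a'}$. Taking $n$ large makes this $o(r^{3})$, so a union bound over the $O(r^{-2})$ squares at scale $r = 2^{-j}$, followed by Borel--Cantelli over $j$ (and over $M$, using $\p[E_M^c] \le cM^{-b}$), shows that almost surely, for all large $j$, every ball $\BE(x,2^{-j})$ with $x \in \Upsilon \cap K$ has measure at most $2^{-j(\dcle - a')}$.

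Letting $j \to \infty$ gives $\meas{\{x\}}{\Upsilon} = 0$ for all $x \in \Upsilon \cap K$ simultaneously. Exhausting $D$ by countably many compact sets $K$ finishes the argument. The main obstacle is the simultaneous control over uncountably many $x$. This is handled by the dyadic discretization and by the fact that all moments have polynomial bounds in Proposition~\ref{prop:cle_measure_moments}, rather than by any single moment.
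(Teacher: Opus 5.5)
\textbf{There is a genuine gap: your final step is circular.} Proposition~\ref{prop:cle_measure_moments} is \emph{stated} before this lemma, but it is \emph{proved} after it, and its proof depends on it. That proof integrates the density $g$ of Lemma~\ref{lem:measure_has_density}. The proof of Lemma~\ref{lem:measure_has_density} only bounds $\E[\prod_i \meas{B(z_i,r)}{\Upsilon_{\le\ell}}\one_{E_M}]$ for distinct points $z_i$. It then invokes precisely the absence of atoms to conclude that $\E[(\mu\otimes\cdots\otimes\mu)\one_{E_M}]$ puts no mass on the diagonals.

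This cannot be patched around. The mass that $\mu\otimes\mu$ assigns to the diagonal is $\sum_x \meas{\{x\}}{\Upsilon}^2$. So an off-diagonal density bound alone cannot control $\E[\meas{B(z,r)}{\Upsilon}^n]$: the diagonal contribution is exactly what you are trying to show vanishes. The uniform bound $\sup_x \meas{\BE(x,r)}{\Upsilon} \le r^{\dcle-a'}$ that you aim for is the upper half of Proposition~\ref{pr:measure_ub}, which depends on this lemma for the same reason. At this point in the paper only first-moment information (Lemmas~\ref{lem:gasket_measure_first_moment}--\ref{lem:ssw_gasket_exponent_mcle}) is available, so the argument must be built from that.

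\textbf{The missing idea is that a first-moment argument does work once you truncate.} Your remark that ``a direct first moment union bound fails'' is the thing to revisit; you do not need a summable tail bound.

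If some $x \in \Upsilon\cap K$ has $\meas{\{x\}}{\Upsilon} \ge \varepsilon$, then $X_{\varepsilon,\delta} = \sum_{z \in \delta\Z^2\cap K} \meas{B(z,\delta)}{\Upsilon}\one_{\meas{B(z,\delta)}{\Upsilon}\ge\varepsilon} \ge \varepsilon$. So it suffices to show $\E[\meas{B(z,\delta)}{\Upsilon}\one_{\meas{B(z,\delta)}{\Upsilon}\ge\varepsilon}] = o(\delta^2)$ uniformly in $z \in K$.

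To do this, explore the loops targeting $z$ until hitting $\partial B(z,2\delta)$. The mass is zero unless the exploration has not yet surrounded $z$. That event has the same probability as $\Upsilon\cap B(z,2\delta)\neq\varnothing$, which is $\lesssim \delta^{2-\dcle}$ by Lemma~\ref{lem:ssw_gasket_exponent}. Conditionally on the exploration, the remainder is a monochordal $\CLE_\kappa$. By conformal covariance the mass is then $\delta^{\dcle} Y$, where $Y$ is the mass of a fixed-size region under a law comparable to a fixed one.

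The truncated conditional expectation is therefore $\lesssim \delta^{\dcle}\E[Y\one_{Y \ge c\varepsilon\delta^{-\dcle}}] = \delta^{\dcle}\cdot o(1)$. This uses only the finite first moment of $Y$ (Lemma~\ref{lem:gasket_measure_first_moment_mcle}). Hence each summand is $o(\delta^2)$, so $\E[X_{\varepsilon,\delta}] = o(1)$. It follows that $\p[\text{atom of mass} \ge \varepsilon] \le \varepsilon^{-1}\E[X_{\varepsilon,\delta}] \to 0$.

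The $o(1)$ coming from uniform integrability is exactly the factor missing from your untruncated bound $\sum_Q \E[\meas{3Q}{\Upsilon}] \asymp 1$.
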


\begin{proof}
 Fix $\varepsilon > 0$, and consider the event $G^\bad_\epsilon$ that $\meas{\{x\}}{\Upsilon} \ge \varepsilon$ for some $x$. We aim to show that $\p[G^\bad_\epsilon] = 0$. For each $\delta > 0$, consider
 \[ X_{\varepsilon,\delta} = \sum_{z \in \delta\Z^2 \cap K} \meas{B(z,\delta)}{\Upsilon} \one_{\meas{B(z,\delta)}{\Upsilon} \ge \varepsilon} . \]
 Note that $X_{\varepsilon,\delta} \ge \varepsilon$ on $G^\bad_\epsilon$, hence $\E[X_{\varepsilon,\delta}] \ge \varepsilon\p[G^\bad_\epsilon]$. We will show that $\E[X_{\varepsilon,\delta}] \to 0$ as $\delta \searrow 0$, and for this we will show that
 \[
  \E[ \meas{B(z,\delta)}{\Upsilon} \one_{\meas{B(z,\delta)}{\Upsilon} \ge \varepsilon} ] = o(\delta^2)
  \quad\text{as } \delta \searrow 0
 \]
 with a rate that is uniform in $z \in K$.

For each $z \in K$ let $\eta'_z$ be the exploration path that traces the loops of $\Gamma$ clockwise targeting $z$, and let $\tau_{z,\delta}$ be the first time when it hits $\partial B(z,2\delta)$. We have
\begin{align}
 \p{\left[ \eta'|_{[0,\tau_{z,\delta}]} \text{ has not surrounded $z$ clockwise} \right]} = \p[ \Upsilon \cap B(z,2\delta) \neq \emptyset ] \lesssim \delta^{2-\dcle} \label{eqn:cle_not_surrounded}
\end{align}
 by Lemma~\ref{lem:ssw_gasket_exponent}. The conditional law of $\Gamma$ given $\eta'|_{[0,\tau_{z,\delta}]}$ is that of a monochordal \clek{}. If the two marked points are sufficiently separated, we get by conformal covariance and Koebe's distortion theorem that
\begin{align}
  & \E{\left[ \meas{B(z,\delta)}{\Upsilon} \one_{\meas{B(z,\delta)}{\Upsilon} \ge \varepsilon} \mmiddle| \eta'|_{[0,\tau_{z,\delta}]} \right]} \nonumber\\
  &\le \delta^{\dcle}\mccleexp{\D;-i,i}{\left[ \meas{B(0,1-c_0)}{\Upsilon} \one_{\meas{B(0,1-c_0)}{\Upsilon} \ge c_0\varepsilon\delta^{-\dcle}} \right]} \label{eqn:expected_mass_hit_far}
\end{align}
 where $c_0>0$ is a suitable constant. If the two marked points are very close, the same argument used in the proof of Lemma~\ref{lem:gasket_measure_first_moment_mcle} shows that
\begin{align}
  &\E{\left[ \meas{B(z,\delta)}{\Upsilon} \one_{\meas{B(z,\delta)}{\Upsilon} \ge \varepsilon} \mmiddle| \eta'|_{[0,\tau_{z,\delta}]} \right]} \nonumber\\
  &\lesssim \delta^{\dcle}\E_{\D}{\left[ \meas{B(0,1-c_0)}{\Upsilon} \one_{\meas{B(0,1-c_0)}{\Upsilon} \ge c_0\varepsilon\delta^{-\dcle}} \right]} . \label{eqn:expected_mass_hit_close}
\end{align}
 In any case, since the measure has finite first moment by Lemma~\ref{lem:gasket_measure_first_moment_mcle}, the expectation on the right-hand side is $o(1)$ as $\delta \searrow 0$. Combining~\eqref{eqn:cle_not_surrounded}, \eqref{eqn:expected_mass_hit_far}, and~\eqref{eqn:expected_mass_hit_close} completes the proof.
\end{proof}

Next, we recall a general lemma that provides an efficient way to cover a collection of points by annuli which is used to bound the probability that a random fractal gets close to a given collection of points \cite[Lemma~A.2]{kms2023nonsimpleremovability}.

\begin{lemma}[{\cite[Lemma~A.2]{kms2023nonsimpleremovability}}]
\label{lem:annulus_lemma}
Fix $n \in \N$, $r_0 > 0$, and $K \subseteq \h$ compact.  There exists a constant $C = C(n,K,r_0)$ so that the following is true.  Suppose that $z_1,\ldots,z_n \in K$ are distinct.  For each $1 \leq i \leq n$ there exists $n_i \in \N$ and $s_{i,k},r_{i,k} \in [0,r_0)$ such that $0 < s_{i,k} < r_{i,k}$ for $1 \leq k \leq n_i-1$ and $0 = s_{i,n_i} < r_{i,n_i}$ so that the annuli $A(z_i, s_{i,k}, r_{i,k})$ are pairwise disjoint and
\begin{equation}\label{eq:annulus_cover_bound}
 \prod_{i=1}^n \frac{\prod_{k=1}^{n_i-1} s_{i,k}}{\prod_{k=1}^{n_i} r_{i,k}} \leq C \prod_{i=2}^n \frac{1}{\min_{i' < i} |z_i - z_{i'}|}
\end{equation}
with the convention $\prod_{k=1}^{n_i-1} s_{i,k} = 1$ if $n_i = 1$.
\end{lemma}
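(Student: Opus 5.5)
The plan is to prove the lemma by induction on $n$, inserting the points one at a time in the given order $z_1,\ldots,z_n$. Write $d_i = \min_{i'<i}|z_i-z_{i'}|$. At each step we modify the current family of annuli so that three things hold: the family stays pairwise disjoint, all radii stay in $[0,r_0)$, and the left-hand side of~\eqref{eq:annulus_cover_bound} grows by at most a factor $C_n/d_i$, where $C_n$ depends only on $n$.

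\emph{Base case $n=1$.} Take $n_1=1$ and the single disk $A(z_1,0,r_0/2)$. Its contribution is $2/r_0$, which is a constant depending only on $r_0$.

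\emph{Inductive step.} Suppose a valid family has been constructed for $z_1,\ldots,z_{i-1}$ and write $d=d_i$. For the new point $z_i$ we add the disk $A(z_i,0,d/4)$, so $n_i=1$ for now. This contributes the factor $4/d$, which is exactly the factor $1/d_i$ on the right-hand side up to a constant.

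To restore disjointness, look at each existing set around some earlier point $z_{i'}$, and put $\rho=|z_{i'}-z_i|\ge d$. Only sets around $z_{i'}$ that meet $B(z_i,d/4)$ need changing, and these lie at scales comparable to $\rho$, namely in $[\rho-d/4,\rho+d/4]\subseteq[3\rho/4,5\rho/4]$.
\begin{enumerate}[(a)]
 \item An annulus $A(z_{i'},s,r)$ meeting $B(z_i,d/4)$ is replaced by the pieces $A(z_{i'},s,\rho/2)$ and $A(z_{i'},2\rho,r)$, dropping any piece that is empty. The factor $s/r$ becomes at most $(s\cdot 2\rho)/((\rho/2)\, r)=4s/r$. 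If one piece is dropped, the factor only improves, because it is then bounded by the ratio of the surviving piece.
 \item An innermost disk $A(z_{i'},0,r)$ meeting $B(z_i,d/4)$ is replaced by the disk $A(z_{i'},0,\rho/4)$ together with the annulus $A(z_{i'},2\rho,r)$ when $r>2\rho$, or by the disk alone otherwise. In the first case the factor $1/r$ becomes $2\rho/((\rho/4)r)=8/r$. In the second case $1/r$ becomes $4/\rho\le 8/r$.
\end{enumerate}
In both cases the new pieces avoid $B(z_i,d/4)$, since $\rho/2<3\rho/4$ and $2\rho>5\rho/4$. The pieces are subsets of the old sets, so they remain disjoint from each other and from the sets around other points. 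Each step only shrinks sets, so all radii stay in $[0,r_0)$. Each earlier point contributes at most one modified set at the relevant scale, since its sets are disjoint and nested around it; this is the bookkeeping to double-check, since a couple of adjacent sets could meet $B(z_i,d/4)$, but a bounded number of splits of each kind still only costs a constant. So the $i$th insertion multiplies the product by at most $8^{O(i)}\cdot 4/d_i$.

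Iterating gives~\eqref{eq:annulus_cover_bound} with $C=C(n,r_0)$; the compact set $K$ only enters through the requirement that all radii are at most $r_0$. The structural conditions of the lemma also hold: each point keeps exactly one innermost disk with $s_{i,n_i}=0$, and its other sets are genuine annuli with $0<s<r$.

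The main obstacle is the case analysis in step (b). Naively shrinking an innermost disk of radius $r\gg\rho$ down to radius about $\rho$ would cost the unbounded factor $r/\rho$. The fix is to keep an outer annulus $A(z_{i'},2\rho,r)$ alongside the smaller disk, so the loss is only a constant. Apart from this, the argument only needs the ordering convention $\min_{i'<i}$, which matches the insertion order.
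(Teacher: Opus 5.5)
The paper does not prove this lemma; it quotes it from \cite{kms2023nonsimpleremovability}, so there is no in-paper argument to compare against. On its own terms, your insertion scheme works. The point you single out is exactly what keeps the cost per step bounded: keep the outer annulus $A(z_{i'},2\rho,r)$ when a large innermost disk has to shrink. Two steps, however, are wrong or unjustified as written.

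First, the new disk $A(z_i,0,d_i/4)$ is not obtained by shrinking an old set. So your claim that all radii stay in $[0,r_0)$ fails whenever $d_i\ge 4r_0$, which can happen if $\operatorname{diam}(K)>4r_0$. Give the new disk radius $\min\{d_i/4,r_0/2\}$ instead. Since $d_i\le\operatorname{diam}(K)$, its factor is $\max\{4/d_i,2/r_0\}\le \max\{4,2\operatorname{diam}(K)/r_0\}/d_i$. The splitting step is unaffected, because the disk only got smaller. This is exactly where $C$ picks up its dependence on $K$.

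Second, it is not true that each earlier point has at most one set meeting $B(z_i,d_i/4)$. Several concentric annuli around $z_{i'}$ can have radial ranges meeting $(3\rho/4,5\rho/4)$. Your per-set cost of $8$ then only helps if you bound the number of sets per point, which you assert but do not prove.

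The cleanest repair is to excise, for every $i'<i$, the whole radial range $I=[\rho/2,2\rho]$ from all sets around $z_{i'}$ at once. The disk $A(z_{i'},0,r)$ becomes $A(z_{i'},0,\min\{r,\rho/2\})$, together with $A(z_{i'},2\rho,r)$ if $r>2\rho$. Each annulus keeps only its parts at radii below $\rho/2$ and above $2\rho$.

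Write the contribution of $z_{i'}$ to the left side of \eqref{eq:annulus_cover_bound} as $\exp\bigl(-\log r_{\mathrm{disk}}-\sum_k\log(r_k/s_k)\bigr)$. Under the excision, the exponent increases by exactly the $dt/t$-mass of $I$ intersected with the union of the radial ranges of these sets, hence by at most $\log 4$. So every earlier point costs a factor at most $4$, however many of its sets are involved. The $i$th insertion therefore multiplies the product by at most $4^{i-1}\max\{4,2\operatorname{diam}(K)/r_0\}/d_i$.

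If you prefer to keep your per-set bookkeeping, note instead the following. All sets meeting the ball other than the innermost and outermost one lie in $[3\rho/4,5\rho/4]$, so they are simply deleted, at a telescoping cost of at most $5/3$. Hence each point gains at most one set per insertion, which gives the bound you need.

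With these two repairs your argument is a complete proof.
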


We will now use Lemmas~\ref{lem:gasket_measure_first_moment}--\ref{lem:annulus_lemma} to prove the main estimate for the proof of Proposition~\ref{prop:cle_measure_moments}.

\begin{lemma}
\label{lem:measure_has_density}
Suppose that $\Gamma$ is a $\CLE_{\kappa}$ on a simply connected domain $D \subseteq \C$, and let $\meas{\cdot}{\Upsilon_{\le\ell}}$ be the volume measure on the union of its gaskets of levels at most $\ell$. For every $\ell \in \N$, $n \in \N$, $a,b>0$, and $K \subseteq D$ compact, there is a constant $c>0$ and events $E_M$ with $\p[E_M] \ge 1-cM^{-b}$ for each $M > 1$ such that the measure $\E[ (\meas{\cdot}{\Upsilon_{\le\ell}} \otimes \cdots \otimes \meas{\cdot}{\Upsilon_{\le\ell}}) \one_{E_M}]$ has a density $g$ with respect to Lebesgue measure that satisfies the bound
\begin{equation}
\label{eqn:cle_measure_density_ubd}
g(z_1,\ldots,z_n) \leq M \bigl( \min_{i \neq i'} |z_i - z_{i'}|\bigr)^{-a} \prod_{i=2}^n \bigl( \min_{i' < i} |z_i - z_{i'}| \bigr)^{\dcle - 2} ,
\quad z_1,\ldots,z_n \in K .
\end{equation}
\end{lemma}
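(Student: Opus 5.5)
We bound the $n$-point intensity by a multi-scale decomposition around the points $z_1,\ldots,z_n$. We work with small balls: it suffices to show that for distinct $z_1,\ldots,z_n\in K$ and $\delta>0$ much smaller than $\min_{i\neq i'}|z_i-z_{i'}|$,
\[ \E\Bigl[\prod_{i=1}^n \meas{B(z_i,\delta)}{\Upsilon_{\le\ell}}\,\one_{E_M}\Bigr] \le M\delta^{2n}\bigl(\min_{i\neq i'}|z_i-z_{i'}|\bigr)^{-a}\prod_{i=2}^n\bigl(\min_{i'<i}|z_i-z_{i'}|\bigr)^{\dcle-2}. \]
The density bound~\eqref{eqn:cle_measure_density_ubd} then follows by Lebesgue differentiation. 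For fixed $z_i$, apply Lemma~\ref{lem:annulus_lemma} with $r_0$ small relative to $\dist(K,\partial D)$, and shrink the innermost radii to $r_{i,n_i}=\delta$ by adding the annuli $A(z_i,\delta,\cdot)$ to the innermost scale. This gives pairwise disjoint annuli $A(z_i,s_{i,k},r_{i,k})$ whose product of modulus ratios is controlled by~\eqref{eq:annulus_cover_bound}.

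For each annulus $A(z_i,s_{i,k},r_{i,k})$ with $k<n_i$, $\Upsilon_{\le\ell}$ must cross it whenever $B(z_i,\delta)$ carries mass. By Lemma~\ref{lem:ssw_gasket_exponent} and its multichordal analogue Lemma~\ref{lem:ssw_gasket_exponent_mcle}, this crossing has conditional probability at most $c(s_{i,k}/r_{i,k})^{2-\dcle-a'}$ given the configuration outside $B(z_i,r_{i,k})$. For the innermost ball, Lemma~\ref{lem:gasket_measure_first_moment_mcle} gives a conditional mass expectation of order $\delta^2 r_{i,n_i}^{\dcle-2}$. We would multiply these conditional bounds by a Markovian exploration: explore $\Gamma$ from outside toward the annuli, and at each annulus use Theorem~\ref{thm:cle_partially_explored} to express the remaining configuration as a multichordal \clek{}. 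The product over all annuli gives
\[ \delta^{2n}\prod_i \frac{\prod_k s_{i,k}^{2-\dcle-a'}}{\prod_k r_{i,k}^{2-\dcle-a'}}\cdot r_{i,n_i}^{\ldots}, \]
and~\eqref{eq:annulus_cover_bound}, raised to the power $2-\dcle$, yields $\prod_{i\ge2}\min_{i'<i}|z_i-z_{i'}|^{\dcle-2}$. The $a'$-losses, summed over at most $O(\log)$ many factors, are absorbed into $(\min|z_i-z_{i'}|)^{-a}$.

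\textbf{Main obstacle.} Lemmas~\ref{lem:gasket_measure_first_moment_mcle} and~\ref{lem:ssw_gasket_exponent_mcle} require the marked points to be $\dsep$-separated in harmonic measure and the number of strands $N$ to be bounded, and neither holds automatically. This is where the events $E_M$ come in. We would let $E_M$ be the event that for every dyadic square $Q$ meeting $K$ at every scale, the number of strands crossing $A_Q$ is at most $C\log M$-ish, and non-separation occurs only at a small fraction of scales. Lemma~\ref{le:separation_event}, applied along each point's dyadic scales with a union bound over squares, gives $\p[E_M^c]\le cM^{-b}$. On $E_M$, within each annulus of ratio $s/r$ we would select only the good scales, where separation holds and Lemma~\ref{lem:ssw_gasket_exponent_mcle} applies with fixed constants. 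Good scales are a $(1-q)$ fraction of all scales, so the exponent $2-\dcle$ degrades to $(1-q)(2-\dcle)$, and choosing $q$ small in terms of $a$ absorbs the loss.

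The delicate point is that $E_M$ is not adapted to the exploration. I would handle this by replacing $\one_{E_M}$ with the product of the indicators of separation at the chosen good scales, which are measurable at the appropriate exploration times. I would also union-bound over the at most $2^{O(\log)}$ possible choices of good-scale sets, with the polynomial factor absorbed into $M$ and the $-a$ exponent.
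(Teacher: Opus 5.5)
Your overall architecture matches the paper's: reduce to a bound on $\E[\prod_i\meas{B(z_i,\delta)}{\Upsilon_{\le\ell}}\one_{E_M}]$, cover by the annuli of Lemma~\ref{lem:annulus_lemma}, explore from the outside in, and use the multichordal bounds of Lemmas~\ref{lem:gasket_measure_first_moment_mcle} and~\ref{lem:ssw_gasket_exponent_mcle}. However, the way you resolve what you call the main obstacle does not work.

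Lemma~\ref{lem:ssw_gasket_exponent_mcle} comes with a multiplicative constant $c$, and the exponent $2-\dcle$ only appears when the lemma is applied once across many scales. Chaining it over the good scales of an annulus costs a factor $c$ per application, i.e.\ $(r/s)^{(1-q)\log_2 c}$. Unless $c$ is very close to $1$, this overwhelms the decay rather than degrading the exponent to $(1-q)(2-\dcle)$.

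You only need one separated scale per annulus, close to its outer radius. The paper takes $E_M=\bigcap_{j\ge j_0}\wt E_j$ with $j_0\asymp\log M$, where $\wt E_j$ asks that for every $z\in K$ some $j'\in[(1-a)j,j)$ has $\Esep_{z,j'}$. Lemma~\ref{le:separation_event} and a union bound give $\p[E_M^c]\lesssim M^{-b}$. The paper then applies Lemma~\ref{lem:annulus_lemma} with $r_0=2^{-j_0}$; changing $r_0$ costs $(r_0/\wt r_0)^n$, which is absorbed into $M$. On $E_M$, the first separated scale $2^{-j'}\le r_{i,k}$ is a stopping time of the exploration with $2^{-j'}\ge r_{i,k}^{1+O(a)}$. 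A single application of Lemma~\ref{lem:ssw_gasket_exponent_mcle} then gives $c(s_{i,k}/r_{i,k})^{2-\dcle-a}r_{i,k}^{-O(a)}$. Since there are at most $2n$ annuli, only $c^{2n}$ is lost, and no union bound over sets of good scales is needed.

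Your strand-count condition is also unnecessary, since separation on $\partial A_Q^\inside$ already bounds the number of marked points by $O(1/\dsep)$. Imposed on all dyadic squares at all scales, it would not even be a high-probability event.

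Second, shrinking the innermost radius to $\delta$ breaks the differentiation step. Any $a'$- or $q$-loss on the annulus $A(z_i,\delta,r_{i,n_i})$ is a factor $\delta^{-\epsilon}$. You would then get $\delta^{2n-n\epsilon}$ rather than $\delta^{2n}$, and dividing by the volume of the product of balls yields no finite density. Instead, apply Lemma~\ref{lem:gasket_measure_first_moment_mcle} directly to $B(z_i,\delta)$ from the separated scale near the original $r_{i,n_i}$. This gives $c\delta^2 2^{-j'(\dcle-2)}\le c\delta^2 r_{i,n_i}^{\dcle-2-O(a)}$, with the exact factor $\delta^2$ and all losses in the $\delta$-independent radii $r_{i,k}$.

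Finally, estimates at distinct points only give absolute continuity away from the diagonals $\{z_i=z_{i'}\}$. To conclude that the expected product measure has a density, you must also show it does not charge the diagonals. This requires that $\meas{\cdot}{\Upsilon_{\le\ell}}$ has no atoms, which is Lemma~\ref{le:mu_no_atoms}.
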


\begin{proof}
By the conformal covariance of the gasket measure, it suffices to assume $D = \h$. Our aim is to define events $E_M$ with $\p[E_M] \geq 1-O(M^{-b})$ so that the following is true. Suppose that $z_1,\ldots,z_n \in K$ are distinct and that $r > 0$ is sufficiently small. Then
\begin{equation}
\label{eqn:ball_mass_ubd}
\E{\left[ \prod_{i=1}^n \meas{B(z_i,r)}{\Upsilon_{\le\ell}} \, \one_{E_M} \right]} \leq M r^{2n} \bigl( \min_{i \neq i'} |z_i - z_{i'}|\bigr)^{-a} \prod_{i=2}^n \bigl( \min_{i' < i} |z_i - z_{i'}| \bigr)^{\dcle - 2}.
\end{equation}
From this we conclude that the measure $\E[ (\meas{\cdot}{\Upsilon_{\le\ell}} \otimes \cdots \otimes \meas{\cdot}{\Upsilon_{\le\ell}}) \one_{E_M}]$, at least away from the diagonals, is absolutely continuous with respect to the $n$-dimensional Lebesgue measure, and Lebesgue's differentiation theorem yields the upper bound~\eqref{eqn:cle_measure_density_ubd} for the density. Finally, since by Lemma~\ref{le:mu_no_atoms}, $\meas{\cdot}{\Upsilon_{\le\ell}}$ a.s.\ does not have atoms, we obtain that $\E[ (\meas{\cdot}{\Upsilon_{\le\ell}} \otimes \cdots \otimes \meas{\cdot}{\Upsilon_{\le\ell}}) \one_{E_M}]$ is $0$ on the diagonals. This will conclude the proof of the lemma.

Let $\Esep_{z,j} = \Esep_{z,j}(\dsep)$ be the events defined in Section~\ref{subsec:independence_across_scales}, and let $\CF_{z,j,\alpha}$ be as defined in Section~\ref{subsec:independence_across_scales}. Fix $a>0$ small. For $j \in \N$, $2^{-j} < \dist(K,\partial D)$, let $\wt{E}_j$ be the event that for each $z \in K$ there is $(1-a)j \le j' < j$ such that $\Esep_{z,j'}$ occurs. Let $E_{j_0} = \cap_{j \ge j_0} \wt{E}_j$. Given any $b>0$, by Lemma~\ref{le:separation_event} there is $\dsep > 0$ and $c>0$ such that $\p[(E_{j_0})^c] \le ce^{-bj_0}$.

Suppose that we are on the event $E_{j_0}$. Let $r_0 = 2^{-j_0}$. Given $z_1,\ldots,z_n$, let $r_{i,k}$ (resp.\ $s_{i,k}$) be the outer (resp.\ inner) radii for the annuli centered at the points $z_1,\ldots,z_n$ from Lemma~\ref{lem:annulus_lemma}. We explore the \clek{} from the outside to the inside. For each of the annuli $A(z_i,s_{i,k},r_{i,k})$, we find the first scale $j'$ within $B(z_i,r_{i,k})$ so that $\Esep_{z_i,j'}$ occurs. For each of the annuli $A(z_i,s_{i,k},r_{i,k})$ with $k < n_i$ we get from Lemma~\ref{lem:ssw_gasket_exponent_mcle} that
\[
 \p[ \Upsilon_{\le\ell} \cap B(z_i,s_{i,k}) \neq \emptyset \mid \CF_{z_i,j',\alpha} ] \one_{\Esep_{z',j'}} \le c\left(\frac{s_{i,k}}{2^{-j'}}\right)^{2-\dcle-a} \le c\left(\frac{s_{i,k}}{r_{i,k}}\right)^{2-\dcle-a}r_{i,k}^{-a} ,
\]
and for each $B(z_i,r_{i,k})$ we get from Lemma~\ref{lem:gasket_measure_first_moment_mcle} that
\[
 \E[ \meas{B(z_i,r)}{\Upsilon_{\le\ell}} \mid \CF_{z_i,j',\alpha} ] \one_{\Esep_{z',j'}} \le c r^2 2^{-j'(\dcle-2)} \le c\frac{r^2}{r_{i,k}^{2-\dcle+a}}
\]
Noting that we can choose the annuli so that the total number of annuli is at most $\sum_i n_i \le 2n$, we conclude by~\eqref{eq:annulus_cover_bound} that for some constants $c_1,c_2 > 0$ that
\begin{equation*}
\begin{split}
\E{\left[ \prod_{i=1}^n \meas{B(z_i,r)}{\Upsilon_{\le\ell}} \, \one_{E_{j_0}} \right]}
&\leq c_1 r^{2n} (\min_i r_{i,n_i})^{-2na} \left(\prod_{i=1}^n \frac{\prod_{k=1}^{n_i-1} s_{i,k}}{\prod_{k=1}^{n_i} r_{i,k}} \right)^{2-\dcle-a} \\
&\le c_2 r^{2n} (\min_{i \neq i'} |z_i - z_{i'}|)^{-2na} \left(\prod_{i=2}^n \frac{1}{\min_{i' < i} |z_i - z_{i'}|}\right)^{2-\dcle-a} .
\end{split}
\end{equation*}
This proves~\eqref{eqn:ball_mass_ubd} for a fixed $M$. Finally, note that changing $r_0$ to $\wt{r}_0$ in Lemma~\ref{lem:annulus_lemma} corresponds to an extra factor of $(r_0/\wt{r}_0)^n$ in~\eqref{eq:annulus_cover_bound}. This implies that we can take $E_M = E_{j_0}$ with $j_0 \asymp \log M$ as $M \to \infty$ in~\eqref{eqn:ball_mass_ubd}, concluding that $\p[(E_M)^c] = O(M^{-b})$.
\end{proof}

\begin{proof}[Proof of Proposition~\ref{prop:cle_measure_moments}]
Let $E_M$, $M>1$, be the events from Lemma~\ref{lem:measure_has_density}.  Then Lemma~\ref{lem:measure_has_density} implies that $\E[ (\meas{\cdot}{\Upsilon_{\le\ell}} \otimes \cdots \otimes \meas{\cdot}{\Upsilon_{\le\ell}}) \one_{E_M}]$ has a density $g$ with respect to Lebesgue measure satisfying the bound~\eqref{eqn:cle_measure_density_ubd}.  Suppose that we have $z \in D$ and $r > 0$ so that $B(z,r) \subseteq K$.  Then
\[ \E[ \meas{B(z,r)}{\Upsilon_{\le\ell}}^n \, \one_{E_M}] = \int_{B(z,r)^n} g(z_1,\ldots,z_n) \, dz_1 \cdots dz_n.\]
Inserting the bound~\eqref{eqn:cle_measure_density_ubd} into the above expression and then integrating implies the result.
\end{proof}

\subsection{Lower bound}

We conclude the section with proving the lower bound Proposition~\ref{pr:measure_lb}. We first prove that the measure $\meas{\cdot}{\Upsilon}$ is a.s.\ positive.

\begin{lemma}\label{le:meas_nonzero}
Suppose that $\Gamma$ is a $\CLE_{\kappa}$ on a simply connected domain $D \subseteq \C$, let $\Upsilon$ be its outermost gasket, and let $\meas{\cdot}{\Upsilon}$ be the volume measure on $\Upsilon$. Then $\meas{D}{\Upsilon} > 0$ a.s.
\end{lemma}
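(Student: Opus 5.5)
The plan is a zero-one argument combined with independence across scales. First, the measure is not identically zero: by \cite{ms2022clemeasure} the intensity $\E[\meas{\cdot}{\Upsilon}]$ has a smooth, nonvanishing density, so $p \eqdef \p[\meas{D}{\Upsilon} > 0] > 0$. By conformal covariance, $p$ does not depend on $D$. Moreover, by Lemma~\ref{le:mcle_abs_cont} (or by the argument in the proof of Lemma~\ref{lem:gasket_measure_first_moment_mcle}), there is $p_1>0$ depending only on $\dsep$ with the following property: for a multichordal \clek{} in a marked domain whose marked points are $\dsep$-separated, the probability that the gasket measure of the wired-connected part inside a fixed compact subdomain is positive is at least $p_1$. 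To make this quantitative I would use the locality of the measure stated in the introduction: the restriction of $\meas{\cdot}{\Gamma}$ to a component $V$ is determined by $\Gamma_V$ and is conformally covariant. Hence positivity of the measure in a region is an event in $\CE_K$, and Lemma~\ref{le:mcle_abs_cont} transfers the bound $p>0$ to a lower bound $p_1$.

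Next I will fix a point $z \in D$ and explore towards it. On the event that $z$ is not separated from $\partial D$ too early, the outermost gasket comes arbitrarily close to $z$ only with small probability, so I will not target a fixed point. Instead I will work with the scales $j = j_0+1,\ldots,j_0+k$ around some point of $\Upsilon$. Concretely, run the exploration path $\eta'$ of the outermost loops. On the event $\{\Upsilon \neq \varnothing\}$, which always holds since $\partial D \subseteq \Upsilon$, pick a point of $\Upsilon$ at positive distance from $\partial D$, for instance one lying on a loop that touches $\partial D$. Then consider the dyadic squares $Q_{x,j}$ around it. By Lemma~\ref{le:separation_event}, with probability at least $1-ce^{-bk}$ a positive fraction of the separation events $\Esep_{x,j}$ occur. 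On each such scale, conditionally on $\CF_{x,j,\alpha}$ and by Theorem~\ref{thm:cle_partially_explored}, the unexplored region is a separated multichordal \clek{}. Its part connected to the wired arcs belongs to $\Upsilon$, and with conditional probability at least $p_1$ it carries positive $\mu$-mass inside $B^\inside_{x,j}$. Since the good scales are nested and each success is conditionally independent of the coarser scales, the probability that all of them fail is at most $(1-p_1)^{qk}$. Letting $k\to\infty$ gives $\meas{D}{\Upsilon}>0$ almost surely.

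The main obstacle is choosing the point $x$ measurably without destroying the Markov structure, since a point chosen after seeing the gasket spoils the conditional laws. I would avoid this with a union bound over deterministic points. Take $z$ in a countable dense set and use the fact that $\{\Upsilon \cap B(z,2^{-j_0}) \neq \varnothing\}$ has positive probability for some $z$. It is enough to show $\p[\meas{B(z,2^{-j_0})}{\Upsilon} = 0,\ \Upsilon\cap B(z,2^{-j_0}/2)\neq\varnothing] = 0$, because almost surely some such $z$ works. On that event, every good scale $j$ has the gasket reaching $B^\inside_{z,j}$ through wired arcs. The exponentially small bound then goes through as above, using the iterated conditioning on $\CF_{z,j,\alpha}$ from the proof of Lemma~\ref{lem:measure_has_density}.
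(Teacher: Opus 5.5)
\textbf{There is a genuine gap: the reduction to deterministic centres.}

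On the event $\{\Upsilon\cap B(z,2^{-j_0}/2)\neq\varnothing\}$ nothing forces $\Upsilon$ to reach $B^\inside_{z,j}$ for $j\gg j_0$. These boxes shrink to the fixed point $z$, and $z\notin\Upsilon$ almost surely (by Lemma~\ref{lem:ssw_gasket_exponent}, since $\dcle<2$). So only finitely many of the scales $j_0+1,\ldots,j_0+k$ see the gasket at all. This is exactly the phenomenon you pointed out when you decided not to target a fixed point.

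At the remaining scales, the unexplored multichordal \clek{} in $(B^\outside_{z,j})^{*,B^\inside_{z,j}}$ typically has no boundary arc belonging to $\Upsilon$; it may even lie inside a fully explored loop. There is then no conditional factor $1-p_1$ to collect. Lemma~\ref{le:separation_event} alone does not give you $qk$ useful scales.

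Recovering the random centre $x\in\Upsilon$ by a union bound does not work either. There are roughly $2^{2k}$ squares of side $2^{-j_0-k}$, each hit by $\Upsilon$ with probability about $2^{-(2-\dcle)k}$. You would need $2^{\dcle k}(1-p_1)^{qk}\to 0$, i.e.\ a lower bound $p_1$ already close to $1$, which is essentially the statement you are trying to prove.

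There is also a secondary problem with the product bound. The events $\{\meas{B^\inside_{x,j}}{\Upsilon}=0\}$ increase in $j$, so failure at the coarsest good scale already implies failure at all finer ones. Moreover, these events are not $\CF_{x,j',\alpha}$-measurable for $j'>j$. So $(1-p_1)^{qk}$ does not follow as written. To multiply conditional probabilities you would have to localise the failure events in disjoint annuli.

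\textbf{The paper's route.} The paper avoids zooming in altogether and applies the Markov property of the measure to many disjoint regions at once. Since $p=\p[\meas{D}{\Upsilon}>0]$ does not depend on $D$, you can explore the loops meeting $D\setminus U$ for a suitable simply connected $U\subsetneq D$. Conditionally on these loops:
\begin{itemize}
\item the loops in the components $V$ of $U^*$ form independent \clek{} configurations whose exterior gaskets lie in $\Upsilon$;
\item by the locality and conformal covariance recalled in the introduction, $\meas{\cdot\cap V}{\Upsilon}$ is the gasket measure of that configuration;
\item hence it is positive with conditional probability exactly $p$.
\end{itemize}
This gives $1-p\le \E[(1-p)^N]$, where $N$ is the number of such components. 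For instance, take $D\setminus U$ to be a neighbourhood of a boundary arc; then $N$ should be a.s.\ infinite, because a loop crossing $\partial U$ cuts off infinitely many pockets on its outer side inside $U$. This forces $p=1$.

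This route needs neither separation events, multichordal laws, nor Lemma~\ref{le:mcle_abs_cont}. It also gives the exact probability $p$ per region rather than a lower bound $p_1$.
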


\begin{proof}
 We have that $\meas{D}{\Upsilon}$ is not identically zero, therefore $p = \p[\meas{D}{\Upsilon} > 0] > 0$, and by the conformal covariance this probability does not depend on $D$. By repeatedly applying the Markov property of the measure we conclude that $p=1$.
\end{proof}

\begin{proof}[Proof of Proposition~\ref{pr:measure_lb}]
We use the notation introduced in Section~\ref{subsec:independence_across_scales}. In particular, for each $z \in K$ and $j > -\log_2(\dist(K,\partial D))$, let $Q_{z,j} \in \CS_j$ be the unique square containing $z$. Let $A_{z,j} = A_{Q_{z,j}} = 2Q_{z,j} \setminus (3/2)Q_{z,j}$.

Suppose that $a,b>0$ and a compact set $K \subseteq D$ are fixed. Let $M>1$ and let $E_j$ be the event that the following holds. For each $z \in K$ and $j > -\log_2(\dist(K,\partial D))$, there exists $j < j' < (1+a)j$ such that the following holds.
\begin{itemize}
 \item There is a collection of at most $M$ points $\{u_i\}$ in $A_{z,j'}$ that separate $A_{z,j'}^\inside$ from $A_{z,j'}^\outside$ in each gasket $\Upsilon$.
 \item For each $u_i$, the two strands intersecting at $u_i$ intersect at a point $v_i$ near $u_i$ so that if $V_i$ is the region bounded between the two strands from $u_i$ to $v_i$, then $\meas{V_i}{\Upsilon} \ge M^{-1}2^{-j'\dcle}$ where $\Upsilon$ is the gasket containing the points $u_i,v_i$.
\end{itemize}
By Lemma~\ref{le:meas_nonzero}, the scaling covariance of the volume measure, and the independence across scales, we see that if $M$ is sufficiently large (depending on $a$), then $\p[(E_j)^c] \le O(e^{-bj})$.

Suppose that $2^{-j} < \dist(K,\partial D)$ and we are on the event $E_j$. Let $\Upsilon$ be a gasket of $\Gamma$, let $x \in \Upsilon \cap K$ and $w \in \partial\Bpath(x,2^{-j})$. Let $B_{x,w} \subseteq \ol{\BE}(x,2^{-j})$ be a simply connected set containing all simple admissible paths within $\ol{\BE}(x,2^{-j})$ from $x$ to $w$. Let $j < j' < (1+a)j$ be as in the definition of the event $E_j$. Since $w \in \partial\Bpath(x,2^{-j})$, there is an admissible path $\gamma$ from $x$ to $w$ crossing $A_{x,j'}$. Then $\gamma$ needs to cross one of the $V_i$, and we have $V_i \subseteq B_{x,w}$ (see Figure~\ref{fi:region_containing_bubble} for an illustration). Therefore
\[ \meas{B_{x,w}}{\Upsilon} \ge \meas{V_i}{\Upsilon} \ge M^{-1}2^{-j'\dcle} \ge M^{-1}2^{-j(1+a)\dcle} . \]

Taking a union bound over $j$ gives the result.
\end{proof}

\section{Minkowski content construction}
\label{sec:minkowski_content}

In this section, we prove Theorem~\ref{th:minkowski}. As in Section~\ref{subsec:main_results}, we let $\kappa \in (4,8)$ be fixed and let $\dcle$ be as in~\eqref{eqn:cle_dim}. For the simplicity of notation, we assume throughout this section that $\Upsilon$ is the outermost gasket of $\Gamma$. For the interior gaskets, the statement of Theorem~\ref{th:minkowski} follows by the local absolute continuity proved in \cite{amy-cle-resampling}, but we note that the quantitative statements proved in this section can also be shown for the interior gaskets.

\subsection{Setup for the proof}
\label{se:setup_pf}

Recall the notations introduced in Section~\ref{subsec:independence_across_scales}.

For each $j \in \N$, we will divide the collection $\CS_j$ into the four subcollections $\CS_{j,i}$, $i=1,2,3,4$, where the squares in each $\CS_{j,i}$ do not share a vertex. We let $\CF_{j,i}$ be the $\sigma$-algebra generated by the partial exploration of the strands of $\Gamma$ that intersect $\bigcap_{Q \in \CS_{j,i}} A_Q^\outside$, extended up until they intersect $A_Q^\inside$ for some $Q \in \CS_{j,i}$. We let $\CF_{j,i,\alpha}$ be the $\sigma$-algebra generated by $\CF_{j,i}$ and the interior link patterns in each of the $Q \in \CS_{j,i}$ (note that this also determines the gasket outside the annuli).

As explained in Section~\ref{subsec:independence_across_scales}, the conditional law of $\Gamma$ in $Q$ given $\CF_{j,i}$ is that of a multichordal \clek{} where the exterior link pattern may be random in case the strands cross some other $A_{Q'}$ with $Q' \in \CS_{j,i}$ (and therefore the conditional law may change when conditioning additionally on the link pattern in~$Q'$). But if we condition on $\CF_{j,i,\alpha}$, then the multichordal \clek{} in each $Q$ are exactly independent.

Further, let $\CS^\Upsilon_j \subseteq \CS_j$ be the squares $Q$ such that the gasket $\Upsilon$ intersects $A_Q^\inside$. For some $\dsep > 0$ whose value will be chosen later, let $\CS^\separated_j \subseteq \CS_j$ be the squares $Q$ such that the marked points on the $\partial A_Q^\inside$ have distance at least $\dsep 2^{-j}$ from each other. We let $\CS^{\Upsilon,\separated}_j = \CS^\Upsilon_j \cap \CS^\separated_j$ and $\CS^{\Upsilon,\separated}_{j,i} = \CS^{\Upsilon,\separated}_j \cap \CS_{j,i}$.

\begin{lemma}\label{le:good_superbox}
 For each $q \in (0,1)$, $b>0$, and compact set $K \subseteq D$ there exist $\dsep > 0$, $M > 1$, and $c>0$ such that the following holds. For each $k_0,k \in \N$ and $Q \in \CS_k$ with $Q \subseteq K$, let $G$ be the event that for at least $q$ fraction of $j = k_0,\ldots,k$, if we let $Q_{j} \in \CS_{j}$ be the square containing $Q$, then
 \[ Q_{j} \in \CS^{\separated}_{j} \quad\text{and}\quad \meas{Q_{j}}{\Upsilon} \le M2^{-j\dcle} . \]
 Then $\p[G^c] \le ce^{-b(k-k_0)}$.
\end{lemma}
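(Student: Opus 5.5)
We combine the separation estimate of Lemma~\ref{le:separation_event} with a conditional moment bound for the mass of $Q_j$, in the spirit of independence across scales. Write $G_j^1 = \{Q_j \in \CS^\separated_j\}$ and $G_j^2 = \{\meas{Q_j}{\Upsilon} \le M2^{-j\dcle}\}$. It suffices to show two things. First, with probability $1-O(e^{-b(k-k_0)})$, at most a $(1-q)/2$ fraction of $j \in \{k_0,\ldots,k\}$ have $(G_j^1)^c$. Second, with the same probability, at most a $(1-q)/2$ fraction have $G_j^1 \cap (G_j^2)^c$, or more conveniently $(G_j^2)^c$ at a scale $j$ preceded by a separated scale. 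The first claim is exactly Lemma~\ref{le:separation_event}, applied with $z$ a point of $Q$ and the fraction $(1-q)/2$, for $\dsep$ small depending on $q,b$. We need the uniformity in $z \in K$ and in $k_0$ with $2^{-k_0} \lesssim \dist(K,\partial D)$; the lemma provides this since its constant depends only on $q,b$.

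For the second claim we use the conditional structure. On $\Esep_{z,j-1}$, the conditional law given $\CF_{z,j-1,\alpha}$ of the configuration inside $B^\inside_{z,j-1}$ is a multichordal \clek{} with $\dsep$-separated marked points. The square $Q_j$ sits inside $B^\inside_{z,j-1}$ at distance of order $2^{-j}$ from its boundary, so harmonic measures of the boundary arcs seen from the center are bounded below. By Lemma~\ref{lem:gasket_measure_first_moment_mcle} and scaling, we get
\[ \E[\meas{Q_j}{\Upsilon} \mid \CF_{z,j-1,\alpha}]\one_{\Esep_{z,j-1}} \le c_1 2^{-j\dcle} . \]
Here $\Upsilon$ restricted to $Q_j$ is contained in the union of gaskets of the multichordal \clek{} of level $\le 1$, and for the outermost gasket level $0$ of the interior configuration suffices. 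Markov's inequality then gives conditional probability at most $c_1/M$ for $(G_j^2)^c$. We take $M$ large so that $c_1/M \le p$, with $p$ small.

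To turn this into an exponential bound, we use a standard supermartingale/Chernoff argument along the filtration generated by the $\CF_{z,j,\alpha}$, which increases as $j$ increases since the explorations are nested. The indicators $\one_{(G_j^2)^c}$ are not measurable with respect to $\CF_{z,j+1,\alpha}$. We handle this by splitting scales into residue classes modulo some fixed $m \ge 2$ and instead using events measurable at scale $j+m$. Alternatively, we note that $\meas{Q_j}{\Upsilon}$ is determined by $\Gamma$ inside $(3/2)Q_{j}$, which is explored by time $\CF_{z,j+2,\alpha}$ up to the inside of $B^\inside_{z,j+2}$. To avoid this subtlety, we bound $\meas{Q_j}{\Upsilon} \le \meas{Q_j \setminus B^\inside_{z,j+2}}{\Upsilon} + \meas{B^\inside_{z,j+2}}{\Upsilon}$, and it is cleaner to use residue classes mod $m$ large. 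Within each class, the product of conditional probabilities $\le p$ over separated scales gives, by the usual binomial counting, probability $\le \binom{n}{\epsilon n}p^{\epsilon n}(\text{+ separation failure}) \le e^{-bn}$ once $p$ is small enough depending on $b,q$. Here $n \asymp (k-k_0)/m$, and the separation failures are controlled by the first claim.

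The main obstacle is this measurability/filtration issue: $\meas{Q_j}{\Upsilon}$ depends on the whole configuration inside $Q_j$, including deeper scales, so the events at different $j$ are not adapted. Our first attempt is to replace $G_j^2$ by the event that $\E[\meas{Q_j}{\Upsilon} \mid \CF_{z,j+m,\alpha}] \le (M/2)2^{-j\dcle}$, which is adapted. We then separately control the remainder $\meas{Q_j}{\Upsilon} - \E[\cdot \mid \CF_{z,j+m,\alpha}]$. Because the gasket piece inside $B^\inside_{z,j+m}$ has expected mass $\lesssim 2^{-j\dcle}2^{-m(2-\dcle)}$ by Lemma~\ref{lem:gasket_measure_first_moment_mcle} and Lemma~\ref{lem:ssw_gasket_exponent_mcle}, its contribution is small with high conditional probability. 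A second Chernoff bound over scales absorbs it.
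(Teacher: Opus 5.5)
The separation part is fine. So is your adapted piece: the events $\{\E[\meas{Q_j}{\Upsilon}\mid\CF_{z,j+m,\alpha}] > (M/2)2^{-j\dcle}\}$ are measurable at scale $j+m$. By the tower property, Lemma~\ref{lem:gasket_measure_first_moment_mcle} and Markov, they have conditional probability $O(1/M)$ given an earlier separated scale, so a residue-class Chernoff bound handles them.

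\textbf{The gap is the remainder.} Write $X_j = \meas{Q_j}{\Upsilon} - \E[\meas{Q_j}{\Upsilon}\mid\CF_{z,j+m,\alpha}]$.
\begin{itemize}
\item $X_j$ is exactly as non-adapted as $\meas{Q_j}{\Upsilon}$. It is driven by the mass in $B^\inside_{z,j+m}$, which depends on all deeper scales.
\item The $X_j$ for different $j$ are nested, not approximately independent. So there is no filtration along which a ``second Chernoff bound'' applies, and repeating the conditional-expectation trick only produces another remainder.
\item Concretely, a single deep event $\meas{B(z,2^{-K})}{\Upsilon} \ge M2^{-j^*\dcle}$ with $K \gg j^*$ makes $X_j$ large for essentially every (separated) $j$ between $j^*$ and $K-m$. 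Failures therefore come in long runs.
\item Your per-scale smallness uses only first moments. A Markov bound on such an event decays only at a fixed exponential rate in $K-j^*$, roughly $2^{-2(K-j^*)}$. That cannot give $e^{-b(k-k_0)}$ for arbitrary $b$.
\end{itemize}

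\textbf{The missing idea is the paper's attribution argument.}
\begin{itemize}
\item Split $Q_{j_0}$ into shells $R^{j_0}_j \subseteq 2Q_j\setminus 2Q_{j+1}$, $j\ge j_0$. Each shell's mass is measurable with respect to the exploration at scale $j+1$, hence adapted.
\item Using the superpolynomial tails of Proposition~\ref{prop:cle_measure_moments}, not just first moments, one gets
\[
\p[\meas{R^{j_0}_j}{\Upsilon} \ge M2^{-j\dcle+a(j-j_0)}] = O(M^{-1}e^{-b(j-j_0)})
\]
for any $b$. The threshold grows with the depth $j-j_0$.
\item For $a<\dcle$ and $M$ large these thresholds sum to less than $M^2 2^{-j_0\dcle}$. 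So every failure $\meas{Q_{j_i}}{\Upsilon}\ge M^2 2^{-j_i\dcle}$ is witnessed by some shell $j_i'\ge j_i$ exceeding its threshold.
\item For a fixed set of failing scales $j_1<\cdots<j_n$, independence across scales and a union bound over the witnesses $(j_i')$ give probability $O(e^{-bn})$. A final union bound over the sets of failing scales finishes the proof.
\end{itemize}
The depth-dependent threshold, combined with arbitrarily high moments, is what makes one deep exceptional shell costly enough to pay for the whole run of failures it causes. Your remainder step has no substitute for this.
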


\begin{proof}
 This follows from the same argument as the proof of \cite[Proposition~3.1]{my2025geouniqueness} where here we use the superpolynomial tails of the measure from Proposition~\ref{prop:cle_measure_moments}. We give a brief sketch of the proof. The condition that most scales are in $\CS^{\separated}_j$ follows from Lemma~\ref{le:separation_event}.

 To achieve the second condition, note that it suffices to show for each given sequence $k_0 \le j_1 < \cdots < j_n \le k$ that the probability that each $j_i$ fails the condition is at most $O(e^{-bn})$. Since $b$ can be taken as large as we want, this implies the lemma statement via a union bound.

 For each $k_0 \le j_0 \le k$, we decompose $Q_{j_0}$ into $R^{j_0}_{j_0} = Q_{j_0} \cap A_{Q_{j_0+1}}^\outside$ and $R^{j_0}_j = (Q_{j_0} \setminus \bigcup_{j'<j} R^{j_0}_{j'}) \cap A_{Q_{j+1}}^\outside$ for each $j>j_0$. Fix a constant $0<a<\dcle$. By Proposition~\ref{prop:cle_measure_moments}, we have
 \begin{equation}\label{eq:measure_subsequent_annuli}
  \p{\left[ \meas{R^{j_0}_j}{\Upsilon} \ge M2^{-j\dcle+a(j-j_0)} \right]} = O(M^{-1}e^{-b(j-j_0)}) .
 \end{equation}
 Moreover, this event is measurable with respect to $\CF_{Q_{j+1}}$. If we have $\meas{Q_{j_i}}{\Upsilon} \ge M^2 2^{-j_i \dcle}$ for each $i$, then there is some $j'_i \ge j_i$ with $\meas{R^{j_i}_{j'_i}}{\Upsilon} \ge M2^{-j'_i\dcle+a(j'_i-j_i)}$. The probability of this event is bounded by~\eqref{eq:measure_subsequent_annuli}, the independence across scales, and a union bound over all choices for $(j'_i)$.
\end{proof}

In the remainder of the proof, we assume that a compact set $K \subseteq D$ is fixed, and we implicitly restrict to the squares of $\CS$ that are contained in $K$.

\subsection{Comparability of the measures}
\label{se:meas_comparability}

The first step in the proof of Theorem~\ref{th:minkowski} is the following lemma.

\begin{lemma}
\label{le:comparability_on_squares}
There exist deterministic constants $0 < c_1 \leq c_2 < \infty$ such that for each $B \in \CS$ we have almost surely
\[ c_1 \limsup_{k \to \infty} \measapprox{k}{B}{\Upsilon} \le \meas{B}{\Upsilon} \le c_2 \liminf_{k \to \infty} \measapprox{k}{B}{\Upsilon} . \]
\end{lemma}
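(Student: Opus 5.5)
We prove the two inequalities by the same mechanism, a law of large numbers across many conditionally independent squares at an intermediate scale, using the approximate additivity of the approximate measures, their finite moments, and the regularity of $\mu$ from Section~\ref{sec:moment_finiteness}. Fix $B \in \CS_l$ and, for $l<j<k$, decompose $B$ into the squares $Q \in \CS_j$. By approximate additivity (Lemma~\ref{le:approx_additivity}),
\[
\measapprox{k}{B}{\Upsilon} \approx \sum_{Q \in \CS_j,\, Q\subseteq B} \measapprox{k}{Q}{\Upsilon} ,
\]
up to a boundary error that is small once $k-j$ is large; we will control this error in $L^1$ using the moment bounds of Lemma~\ref{le:approxmeas_moments}. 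For $\mu$ the analogous identity is exact. The goal is to compare $\measapprox{k}{Q}{\Upsilon}$ and $\meas{Q}{\Upsilon}$ square by square, but only in aggregate over the squares in $\CS^{\Upsilon,\separated}_{j,i}$. On these squares we condition on $\CF_{j,i,\alpha}$, and the configurations inside the different $Q$ become independent multichordal \clek{} with separated marked points.

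The key single-square input is as follows. For a multichordal \clek{} in a marked domain with $\dsep$-separated marked points, normalized to unit scale via \eqref{eq:approxmeas_scaling} and conformal covariance of $\mu$, we need
\[
\E[\measapprox{k-j}{Q'}{\Upsilon}] \le C\, \E[\meas{Q'}{\Upsilon}]
\quad\text{and}\quad
\E[\meas{Q'}{\Upsilon}] \le C\, \E[\measapprox{k-j}{Q'}{\Upsilon}]
\]
uniformly in $k-j$. Here $Q'$ is the inner square and the expectations are conditional on $\CF_{j,i,\alpha}$. We would argue both bounds via Lemma~\ref{le:mcle_abs_cont}, comparing with a standard \clek{}. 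First, $\mu$ is nonzero with positive probability on $Q'$ whenever the gasket hits $A_{Q'}^\inside$, by Lemma~\ref{le:meas_nonzero} and Proposition~\ref{pr:measure_lb}. Second, $\measapprox{k-j}{}{}$ is bounded in $L^2$ and also bounded below on a positive-probability event where a macroscopic piece of gasket is present, since the box count of a set of dimension $\dcle$ is at least order $2^{(k-j)\dcle}\cdot 2^{-(k-j)\dcle}$. Together these give conditional expectations comparable, with constants depending on $\dsep$ only.

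Next comes concentration. Conditionally on $\CF_{j,i,\alpha}$, the summands over $Q \in \CS^{\Upsilon,\separated}_{j,i}$ are independent with uniformly bounded second moments (after scaling by $2^{-j\dcle}$, using Proposition~\ref{prop:cle_measure_moments} and Lemma~\ref{le:approxmeas_moments}). Chebyshev then gives that both sums equal their conditional means up to $o(N_j 2^{-j\dcle})$, where $N_j = \#\CS^{\Upsilon,\separated}_{j}$, with probability summable in $j$. The conditional means are mutually comparable by the previous step. Summing over $i=1,\dots,4$ yields $\measapprox{k}{B}{\Upsilon} \le C \meas{B}{\Upsilon} + \text{error}$ and conversely.

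It remains to control the error, namely the contribution of the squares that are not separated, together with the boundary terms. For this we will choose $j$ depending on $k$ (say $j=k/2$). Lemma~\ref{le:good_superbox} and Lemma~\ref{le:separation_event} show that nonseparated squares carry a small fraction of the mass for both measures, using the upper bound of Proposition~\ref{pr:measure_ub} to bound their $\mu$-mass. To pass from the sums over squares to the $\limsup$ and $\liminf$ in $k$, we apply Borel--Cantelli along $k$, using the summable probabilities from the concentration step.

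The hardest step will be showing that the unseparated squares and the lower bound on $\mu$ are negligible relative to $N_j 2^{-j\dcle}$. This requires $N_j 2^{-j\dcle}$ to be bounded below by a positive multiple of $\meas{B}{\Upsilon}$, which would be obtained from the $\mu$-concentration itself once $\meas{B}{\Upsilon}>0$.
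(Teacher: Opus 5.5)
Your overall architecture matches the paper's. Both arguments use separated squares at an intermediate scale, independence given $\CF_{j,i,\alpha}$, per-square upper and lower bounds, Lemma~\ref{le:good_superbox} for the remaining squares, and a lower bound on $N_j$. Replacing the paper's threshold counting (Lemma~\ref{le:meas_large_dev}) by Chebyshev around conditional means is acceptable for this lemma, since only summable failure probabilities are needed. However, the step handling non-separated squares fails as written. You fix a single scale $j=k/2$ and assert that Lemma~\ref{le:good_superbox} makes the non-separated squares carry a small fraction of the mass there. That lemma is a statement about each small square individually: for most \emph{scales} $j$, its ancestor $Q_j$ is good. Integrating it against the mass only shows that, \emph{averaged over} $j$ in a window, the separated squares carry most of the mass. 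At a prescribed scale the bad squares can carry a large fraction, and nothing in your argument controls a single scale with high probability.

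The missing idea is the paper's scale selection. On the event of Lemma~\ref{le:good_superbox} (with a $3/4$ fraction of scales in $[k/3,k/2]$), at least half of the $j$ satisfy $\meas{\CS^{\Upsilon,\separated}_j}{\Upsilon}\ge\frac12\meas{B}{\Upsilon}$. At least half (possibly different ones) satisfy the analogous bound for $\measapprox{k}{\cdot}{\Upsilon}$. Each of the two inequalities only needs its own good scale, so one proves the concentration for every $j$ in the window and takes a union bound over $j$.

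Second, the lower bound on $N_j$ cannot come ``from the $\mu$-concentration itself.'' Your Chebyshev bound has failure probability $\lesssim 1/N_j$, which is vacuous until $N_j$ is already known to be large. The paper obtains this independently. By Proposition~\ref{pr:measure_lb}, $\meas{B}{\Upsilon}\ge 2^{-4ak}$ off a superpolynomially small event when $\Upsilon$ meets $(1-2^{-ak})B$. The per-square upper tails of Lemma~\ref{le:meas_cond_ub} then force $N_j\ge 2^{j\dcle-5ak}$ whenever the separated squares carry half of $\meas{B}{\Upsilon}$.

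Two smaller points also need repair. Lemma~\ref{le:mcle_abs_cont} is qualitative, so it cannot yield the conditional moment bounds you invoke. Those come from the exploration comparison with probability $p_0$ used in Lemma~\ref{lem:gasket_measure_first_moment_mcle}. The per-square lower bound for the approximate measure must come from Lemma~\ref{le:approxmeas_moments} (first moment bounded below, second moment bounded above, then Paley--Zygmund), as in Lemma~\ref{le:meas_cond_lb}. A dimension heuristic does not give box counts of exact order $2^{(k-j)\dcle}$.
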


Note that since this is an almost sure statement, it does not depend on the choice of $B$ due to scaling and absolute continuity (see \cite[Lemma~7.2]{my2025geouniqueness}).

In the remainder, we write
\[
 \meas{\CS^\star_\star}{\Upsilon} = \sum_{Q \in \CS^\star_\star} \meas{Q}{\Upsilon}
\]
where $\star$ can stand for any of the symbols we introduced in Section~\ref{se:setup_pf}.

\begin{lemma}\label{le:approxmeas_moments}
 Let $B \in \CS_1$. For each $n \in \N$ there exists $c_n > 1$ such that
 \[
  c_n^{-1} \le \E[\measapprox{k}{B}{\Upsilon}^n] \le c_n
  \quad\text{for each } k > 0 .
 \]
\end{lemma}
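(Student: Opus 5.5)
We need uniform upper and lower moment bounds for $\measapprox{k}{B}{\Upsilon}$, with $B\in\CS_1$ fixed and $k>0$ arbitrary. The lower bound is the easy half. By Jensen it suffices to bound $\E[\measapprox{k}{B}{\Upsilon}]$ from below, or equivalently to show that $\p[\measapprox{k}{B}{\Upsilon} \ge c] \ge p$ uniformly in $k$.

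\textbf{Lower bound.} Work on the event, of positive probability, that $\Upsilon$ contains a macroscopic piece inside $B$.
\begin{itemize}
\item For (i)--(iii), compare with the set-covering quantity. Each square $Q\in\CS_k$ with $Q\subseteq B$ that meets $\Upsilon$ contributes $2^{-\dcle k}$. Sub-additivity of $\mu$ and Proposition~\ref{pr:measure_ub} give $\meas{Q}{\Upsilon}\le 2^{-k(\dcle-a)}$ with overwhelming probability. Hence $\measapprox{k}{B}{\Upsilon}\ge 2^{-ka}\meas{B'}{\Upsilon}$, where $B'$ is slightly smaller than $B$. Of course the loss $2^{-ka}$ is not acceptable for a uniform bound.
\item So I instead use a second-moment (Paley--Zygmund) argument on the count $N_k$ of boxes meeting $\Upsilon$. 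Lemma~\ref{lem:ssw_gasket_exponent}, together with its converse one-point lower bound from \cite{ssw2009radii}, gives $\E[N_k]\asymp 2^{k\dcle}$. The two-point estimate in the form of Lemma~\ref{lem:ssw_gasket_exponent_mcle}, applied via the annulus-covering argument as in Lemma~\ref{lem:measure_has_density}, gives $\E[N_k^2]\lesssim 2^{2k\dcle}$.
\item The same applies to the Minkowski content (iii), since it is comparable to the box count.
\item For (iv)--(v), I use Proposition~\ref{pr:measure_lb} together with the scale-invariant upper bound on metric ball diameters from \cite{my2025geouniqueness,my2025resuniqueness}. These show that a metric ball of radius $2^{-k\geoexp}$ is contained in a Euclidean ball of radius comparable to $2^{-k}$ at most scales. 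This bounds the covering number below by a constant times the box count.
\end{itemize}

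\textbf{Upper bound.} This is the main part. For (i)--(iii) I write $\measapprox{k}{B}{\Upsilon}\lesssim 2^{-\dcle k}\sum_{Q}\one_{3Q\cap\Upsilon\ne\varnothing}$, expand the $n$th power, and bound
\[
\p\bigl[\Upsilon\cap 3Q_i\ne\varnothing \text{ for } i=1,\dots,n\bigr].
\]
I do this exactly as in the proof of Lemma~\ref{lem:measure_has_density}, using Lemma~\ref{lem:annulus_lemma}, the separation events of Lemma~\ref{le:separation_event}, and the multichordal estimate Lemma~\ref{lem:ssw_gasket_exponent_mcle}. On $E_M$ this gives the bound
\[
M\prod_{i\ge 2}\Bigl(\max\bigl(2^{-k},\min_{i'<i}|z_i-z_{i'}|\bigr)\Bigr)^{2-\dcle-a}\cdot 2^{-k(2-\dcle)n}\cdot 2^{ka'}.
\]

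The small losses $2^{ka'}$ and the exponent $a$ are the main obstacle: they must be removed to get a $k$-uniform bound. I plan to handle this as follows.
\begin{itemize}
\item I avoid the $a$-losses by using the separation event at a constant fraction of scales. At separated scales, Lemma~\ref{lem:ssw_gasket_exponent_mcle} holds with exact exponent $2-\dcle$ up to a constant factor.
\item I sum over pairs of scales using the exponential tails from Lemma~\ref{le:separation_event} with $b$ large.
\item I handle the complement of $E_M$ by taking $n$th moments and a dyadic decomposition in $M$, using the trivial bound $\measapprox{k}{B}{\Upsilon}\le 2^{(2-\dcle)k}$, or better a tail bound.
\end{itemize}
Summing the resulting expression over the $z_i\in 2^{-k}\Z^2$ then gives $O(1)$, since $\dcle-2>-2$ makes the sums converge.

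For the metric ball counts (iv)--(v), I cover $B\cap\Upsilon$ greedily. By Proposition~\ref{pr:measure_lb}, each Euclidean box meeting $\Upsilon$ can be covered by a bounded-in-moments number of metric balls of radius $2^{-k\geoexp}$. This uses the tail bounds on the number of admissible crossings of an annulus from \cite{amy2025tightness}, with independence across scales ensuring these counts have all moments. The result is $\measapprox{k}{B}{\Upsilon}\lesssim 2^{-\dcle k}\sum_Q X_Q\one_{3Q\cap\Upsilon\ne\varnothing}$ with $X_Q$ having uniform moments, and Hölder's inequality then reduces this to the previous estimate.
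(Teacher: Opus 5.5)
For (i)--(iii) your route is the paper's. The first moment comes from the one-point estimate; you are right that this needs the exact exponent from \cite{ssw2009radii}, not the $a$-lossy Lemma~\ref{lem:ssw_gasket_exponent}. Jensen then gives the lower bound for every $n$, so the Paley--Zygmund step is unnecessary. The upper bound is the multi-point estimate from the proof of Lemma~\ref{lem:measure_has_density}, with $E_M^c$ absorbed via tails. (Minor slip: in your display the distance factors should be negative powers $(\cdots)^{\dcle-2+a}$, as in~\eqref{eq:annulus_cover_bound}.)

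The gap is in the upper bound for (iv)--(v). Dominating the ball count by $2^{-\dcle k}\sum_Q X_Q\one_{3Q\cap\Upsilon\neq\varnothing}$, with $X_Q$ having uniform moments, and then applying H\"older does not give a $k$-uniform bound. After expanding the $n$th power you need $\E[\prod_i X_{Q_i}\one_{3Q_i\cap\Upsilon\neq\varnothing}]\lesssim\p[3Q_i\cap\Upsilon\neq\varnothing\ \forall i]$, up to the same distance factors. H\"older only yields this hitting probability raised to a power $1/p<1$. That loses a factor polynomially large in $2^k$, at least $2^{k(2-\dcle)(1-1/p)}$. Bounding moments of $X_Q$ conditionally on $\{3Q\cap\Upsilon\neq\varnothing\}$ alone is not enough either, because the conditioning must be on all $n$ hitting events at once.

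The missing ingredient is a moment bound for the per-box ball count \emph{conditionally on the exploration outside the box at a separated scale}, i.e.\ given $\CF_{z_i,j',\alpha}$ on $\Esep_{z_i,j'}$. This bound is then used as the innermost factor of the exploration argument of Lemma~\ref{lem:measure_has_density}, in place of $\E[\meas{B(z_i,r)}{\Upsilon}\mid\CF_{z_i,j',\alpha}]$. The paper obtains it as follows. It takes finiteness of all moments of the ball count at one fixed scale $k_0$ from \cite[Section~5.4]{amy2025tightness}. It then transports this to scale $j+k_0$ by conditioning on the exploration until it hits $2Q$, comparing the remaining monochordal \clek{} with a \clek{} as in Lemma~\ref{lem:gasket_measure_first_moment_mcle}, and scaling. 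Proposition~\ref{pr:measure_lb} is not a substitute: used at scale $2^{-k}$, its $r^{\dcle+a}$ bound only gives $X_Q\le 2^{O(ak)}$.

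The lower bound for (iv)--(v) also fails as described; here $\alpha=\geoexp$ or $\resexp$. A pointwise tail bound on the Euclidean diameter of the metric ball around a fixed point says nothing about the balls of an optimal cover, whose centers depend on the configuration. The bound that is uniform over centers (as in the event $G^1_k$ in the proof of Lemma~\ref{le:approx_additivity}) only places a metric ball of radius $2^{-k\alpha}$ inside a Euclidean ball of radius $2^{-k(1-a)}$. Such a ball can meet $2^{\Theta(ak)}$ boxes of $\CS_k$ that hit $\Upsilon$. So you only get a covering number $\gtrsim 2^{-O(ak)}$ times the box count, and ``at most scales'' does not help since the scale is fixed.

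You need a scale-invariant per-box lower bound. The paper uses the packing number $\measapprox{\mathrm{p},\delta}{B}{\Upsilon}$, built from pairwise disjoint balls inside $B\cap\Upsilon$. It is at most the covering number, has positive mean at scale $k_0$ by \cite{amy2025tightness}, and is transported to all scales by the same conditioning-and-scaling argument; Jensen then handles $n>1$. Alternatively, you can count boxes $Q$ in which $\Upsilon$ meets the middle of $Q$ at metric distance at least $2\cdot 2^{-k\alpha}$ from $\Upsilon\setminus 2Q$. Any covering ball meeting such a middle part lies in $2Q$, so it serves $O(1)$ such boxes. By scaling and Lemma~\ref{le:mcle_abs_cont}, each box has this property with probability $\gtrsim 2^{-k(2-\dcle)}$.
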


\begin{proof}
 We begin by proving the statement for the variant~\eqref{it:box_count} which by the comparability implies the same for~\eqref{it:box_count}--\eqref{it:minkowski}.

 The statement for $n=1$ follows from Lemma~\ref{lem:ssw_gasket_exponent}. For $n>1$, by the proof of Lemma~\ref{lem:measure_has_density} there are events $E_M$ with $\p[E_M] \geq 1-O(M^{-b})$ so that for $Q_1,\ldots,Q_n \in \CS_k$ distinct we have
 \begin{multline*}
  \E{\left[ \prod_{i=1}^n \one_{\Upsilon \cap Q_i \neq \varnothing} \, \one_{E_M} \right]} \\
  \leq M 2^{-k(2-\dcle)n} \bigl( \min_{i \neq i'} |\cen(Q_i)-\cen(Q_{i'})|\bigr)^{-a} \prod_{i=2}^n \bigl( \min_{i' < i} |\cen(Q_i)-\cen(Q_{i'})| \bigr)^{\dcle - 2} .
 \end{multline*}
 where $\cen(Q)$ denotes the center of a square $Q$. This implies $\E[\measapprox{k}{B}{\Upsilon}^n \, \one_{E_M}] \lesssim M$.

 We now turn to the variants~\eqref{it:geo_count}--\eqref{it:res_count}. Let $\star = \mathrm{geo},\mathrm{res}$ and $\alpha = \geoexp$ resp.\ $\alpha = \resexp$. We define
 \[
  \measapprox{\mathrm{p},\delta}{B}{\Upsilon} = \delta^{\dcle} \max\left\{ n \in \N : \begin{array}{cc} \text{There exist $x_1,\ldots,x_n \in \Upsilon$ such that} \\ \text{$B_\star(x_i,\delta^\alpha) \subseteq B \cap \Upsilon$ and are pairwise disjoint}\end{array} \right\} .
 \]
 Then for each $j \in \N$ we have
 \[
  \measapprox{\mathrm{p},\delta}{\CS_j}{\Upsilon} \le \measapprox{\mathrm{p},\delta}{B}{\Upsilon} \le \measapprox{\delta}{B}{\Upsilon} \le \measapprox{\delta}{\CS_j}{\Upsilon} .
 \]
 It is shown in \cite[Section~5.4]{amy2025tightness} that for some $k_0 > 0$ we have $0 < \E[ \measapprox{\mathrm{p},k_0}{B}{\Upsilon}^n ] \le \E[ \measapprox{k_0}{B}{\Upsilon}^n ] < \infty$ for each $n \in \N$. For each $j \in \N$ we have
 \[\begin{split}
  \E[\measapprox{j+k_0}{B}{\Upsilon}^n]
  &= \p[\Upsilon \cap 2Q \neq \varnothing] \E[\measapprox{j+k_0}{B}{\Upsilon}^n \mid \Upsilon \cap 2Q \neq \varnothing] \\
  &\asymp \p[\Upsilon \cap 2Q \neq \varnothing] 2^{-j\dcle}\E[\measapprox{k_0}{B}{\Upsilon}^n] \\
  &\asymp 2^{-2j}\E[\measapprox{k_0}{B}{\Upsilon}^n]
 \end{split}\]
 where the last identity is Lemma~\ref{lem:ssw_gasket_exponent}, and the second identity follows because conditionally on the \clek{} exploration up until hitting $2Q$, the remainder is a monochordal \clek{} which can be compared to a \clek{} by the argument in Lemma~\ref{lem:gasket_measure_first_moment_mcle}. The same holds for $\measapprox{\mathrm{p},k}{B}{\Upsilon}$. This implies the lemma statement for $n=1$.

 For $n>1$, the proof of Lemma~\ref{lem:measure_has_density} shows that for $a>0$ small we have for $Q_1,\ldots,Q_n \in \CS_j$ with $\min_{i \neq i'} |\cen(Q_i)-\cen(Q_{i'})| \ge 2^{-(1-a)j}$ we have
 \begin{multline*}
  \E{\left[ \prod_{i=1}^n \measapprox{j+k_0}{Q_i}{\Upsilon} \, \one_{E_M} \right]} \\
  \leq M 2^{-2jn} \bigl( \min_{i \neq i'} |\cen(Q_i)-\cen(Q_{i'})|\bigr)^{-a} \prod_{i=2}^n \bigl( \min_{i' < i} |\cen(Q_i)-\cen(Q_{i'})| \bigr)^{\dcle - 2} .
 \end{multline*}
 This implies $\E[\measapprox{j+k_0}{B}{\Upsilon}^n \, \one_{E_M}] \lesssim M$.
\end{proof}

\begin{lemma}\label{le:approx_additivity}
 For each $a>0$, $b>0$ there exists $c>0$ such that the following holds for each $k \ge j$.
 \[
  \p{\left[ \measapprox{k}{\CS_j}{\Upsilon}-\measapprox{k}{B}{\Upsilon} > 2^{j-k(\dcle-1-a)} \right]} \le ce^{-bk} .
 \]
\end{lemma}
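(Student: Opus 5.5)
The plan is to compare $\measapprox{k}{\CS_j}{\Upsilon}$, meaning the sum of $\measapprox{k}{Q}{\Upsilon}$ over $Q \in \CS_j$ with $Q \subseteq B$, with $\measapprox{k}{B}{\Upsilon}$ separately for each scheme. In every case the discrepancy should come only from a thin neighbourhood of the grid lines $\bigcup_{Q} \partial Q$ inside $B$. I would then bound the contribution of that neighbourhood using a box count together with high moments.

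\textbf{Step 1 (the exactly additive cases).} For \eqref{it:box_count} and \eqref{it:gps_box_count}, every $Q' \in \CS_k$ with $Q' \subseteq B$ lies in exactly one $Q \in \CS_j$. Hence the difference is identically $0$ and there is nothing to prove.

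\textbf{Step 2 (reduction to a strip).} Let $S$ be the Euclidean $2^{-k(1-a/2)}$-neighbourhood of the grid lines inside $B$.

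For \eqref{it:minkowski}, the union of $\BE(x,2^{-k})$ over $x \in Q \cap \Upsilon$ overlaps with the corresponding set for another $Q$ only inside the $2^{-k}$-neighbourhood of $\partial Q$. So the difference is at most $2^{-k(\dcle-2)}$ times the Lebesgue measure of $\bigcup_{x \in \Upsilon \cap S} \BE(x,2^{-k})$. In turn this is at most a constant times $2^{-k\dcle}$ times the number of $Q' \in \CS_k$ meeting $S \cap \Upsilon$.

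For \eqref{it:geo_count} and \eqref{it:res_count}, fix a minimal cover of $B \cap \Upsilon$. I would use the H\"older-type continuity of the metric with respect to $\dpathY$ from \cite{amy2025tightness} (with a union bound, valid off an event of probability $o^\infty(2^{-k})$) to ensure that every ball of radius $2^{-k\alpha}$ has Euclidean diameter at most $2^{-k(1-a/2)}$. Then a ball meeting two different $Q$ lies inside $S$. Covering each $Q \cap \Upsilon$ by the balls of the minimal cover that avoid $S$, together with a cover of $Q \cap \Upsilon \cap S$, gives
\[ \measapprox{k}{\CS_j}{\Upsilon} \le \measapprox{k}{B}{\Upsilon} + \sum_{Q} 2^{-k\dcle} N(Q \cap \Upsilon \cap S), \]
where $N$ is the covering number. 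Each $Q' \in \CS_k$ can be covered by at most $2^{ak/2}$ metric balls of radius $2^{-k\alpha}$, off an event of probability $o^\infty(2^{-k})$. This uses the moment bounds of Lemma~\ref{le:approxmeas_moments} (scaled via~\eqref{eq:approxmeas_scaling}), Markov's inequality with a large moment, and a union bound over the $O(2^{2k})$ squares. So in all cases it suffices to bound $2^{-k\dcle + ak/2}$ times the number $X$ of squares $Q' \in \CS_k$ meeting $S \cap \Upsilon$.

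\textbf{Step 3 (counting boxes in the strip).} The strip $S$ contains $O(2^{j} 2^{k(1+a/2)})$ squares of $\CS_k$. By Lemma~\ref{lem:ssw_gasket_exponent}, each meets $\Upsilon$ with probability $O(2^{-k(2-\dcle-a/4)})$. Hence
\[ \E[X]\, 2^{-k\dcle} \lesssim 2^{j-k(1-a)} , \]
which is much smaller than the target $2^{j-k(\dcle-1-a)}$ because $\dcle - 1 < 1$.

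\textbf{Step 4 (the tail, the main obstacle).} The main obstacle is upgrading this first-moment bound to an $e^{-bk}$ tail. Since $e^{-bk} = 2^{-b'k}$ is only polynomial in $2^k$, I would use an $n$-th moment bound. The $n$-point estimate from the proof of Lemma~\ref{lem:measure_has_density} (as already used in Lemma~\ref{le:approxmeas_moments}) gives, on the events $E_M$, a bound on $\E[X^n]$. Near-diagonal tuples of squares cost at most $2^{O(ak)}$, so this bound is at most $2^{O(ak)}\,(\text{first-moment bound})^n$ up to the sparsity gain. Markov's inequality then gives probability $2^{-n\cdot\Omega(k)}$ for exceeding the threshold, which is at most $ce^{-bk}$ once $n$ is large. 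The complement of $E_M$ is handled by taking $M = 2^{\epsilon k}$, which is absorbed into the slack between $2^{j-k(1-a)}$ and $2^{j-k(\dcle-1-a)}$.
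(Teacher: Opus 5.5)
Your proof is correct, and Steps~1--2 are the paper's reduction. The box counts are exactly additive. For the Minkowski content, overlaps are confined to a thin neighbourhood of the grid lines. For the metric counts, you use the two regularity events: metric balls of radius $2^{-k\alpha}$ have Euclidean diameter at most $2^{-k(1-a)}$, and each $Q'\in\CS_k$ is covered by at most $2^{ak}$ metric balls; each event fails with probability $o^\infty(e^{-k})$.

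You diverge only in the final count, and there Step~4 addresses an obstacle that is not present. The threshold $2^{j-k(\dcle-1-a)}$ is exactly the \emph{trivial} bound obtained by counting \emph{all} squares of $\CS_k$ in the strip. The first sentence of your Step~3 already gives $X\lesssim 2^{j+k(1+a/2)}$ deterministically. Hence $2^{-k\dcle+ak/2}X\lesssim 2^{j-k(\dcle-1-a)}$ on the two good events; shrink the exponents slightly to absorb the constant, and enlarge $c$ to cover small $k$. For the Minkowski content the overlap area is deterministically $O(2^{j-k})$, which gives $O(2^{j-k(\dcle-1)})$. This is the paper's argument: it bounds $\#\CS^\Upsilon_j\le 2^{2j}$ and remarks that this is not optimal.

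Your moment route is nevertheless valid. In $\sum_{z\in S}\abs{z-z_0}^{\dcle-2}$ the far field dominates, so each extra point costs a factor comparable to $\E[X]$, and taking $M=2^{\epsilon k}$ handles $E_M^c$. What it buys is a stronger conclusion: the discrepancy is $O(2^{j-k(1-O(a))})$ off an event of probability $O(e^{-bk})$. It would also tolerate a weaker a~priori bound on the Euclidean size of metric balls. At the stated threshold, however, the Markov step is vacuous, since $X$ never exceeds the number of squares in $S$.

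There are three small precision points.
\begin{enumerate}
\item In Step~2, the balls to keep are those \emph{not contained in} $S$; by your convexity observation each of these meets only one $Q$. Balls merely disjoint from $S$ would not cover $Q\cap\Upsilon\setminus S$.
\item The diameter bound does not follow from continuity of the metric with respect to $\dpathY$. You need the reverse inequality: metric distance below $2^{-k\alpha}$ forces Euclidean distance at most $2^{-k(1-a)}$, with an essentially sharp exponent. The paper imports this from \cite{my2025geouniqueness,my2025resuniqueness}.
\item Lemma~\ref{le:approxmeas_moments} is stated for a fixed $B\in\CS_1$. Uniform moments for the covering numbers of every $Q'\in\CS_k$ therefore need the rescaled version, obtained by the same exploration-and-comparison argument used in its proof, or the citation the paper uses.
\end{enumerate}
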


\begin{proof}
 The measures~\eqref{it:box_count}--\eqref{it:gps_box_count} are exactly additive. The discrepancy in~\eqref{it:minkowski} is bounded by $2^{-k(\dcle-2)}$ times the area of the $2^{-k}$-neighborhood of $\bigcup_{Q \in \CS^\Upsilon_j} \partial Q$ which is at most $O(2^{-(k-j)(\dcle-1)}\measapprox{j}{B}{\Upsilon})$.

 We turn to~\eqref{it:geo_count}--\eqref{it:res_count}. Let $\star = \mathrm{geo},\mathrm{res}$ and $\alpha = \geoexp$ resp.\ $\alpha = \resexp$. Let $a>0$ be small, and let $G^1_k$ be the event that $\abs{x-y} \le 2^{-k(1-a)}$ for each $x,y \in \Upsilon \cap B$ with $\met{x}{y}{\Gamma} < 2^{-k\alpha}$. Let $G^2_k$ be the event that each $Q \in \CS_k$ can be covered by at most $2^{ak}$ many $B_\star$-balls of radius $2^{-k\alpha}$. It is shown in \cite{my2025geouniqueness,my2025resuniqueness} that $\p[(G^1_k)^c] = o^\infty(e^{-k})$ and $\p[(G^2_k)^c] = o^\infty(e^{-k})$.

 Suppose that we are on the event $G^1_k \cap G^2_k$. In $\measapprox{k}{\CS_j}{\Upsilon}$ we are only double counting the balls that intersect $\partial Q$ for some $Q \in \CS^\Upsilon_j$. The number of squares in the Euclidean $2^{-k(1-a)}$-neighborhoods of all $\partial Q$ is at most $2^{-j+k(1+a)}\#\CS^\Upsilon_j$, and therefore they are covered by at most $2^{-j+k(1+2a)}\#\CS^\Upsilon_j$ many $B_\star$-balls of radius $2^{-k\alpha}$.
 To conclude, we simply bound the number $\#\CS^\Upsilon_j$ by $2^{2j}$. (This is not optimal, but suffices for our purpose.)
\end{proof}

In the following, we let $N_j$ be the number of squares in $\CS^{\Upsilon,\separated}_j$ and for $i=1,2,3,4$ let $N_{j,i}$ be the number of squares in $\CS^{\Upsilon,\separated}_{j,i}$.

\begin{lemma}\label{le:meas_large_dev}
 For each $\dsep > 0$ and $b>0$ there exist $M>1$ and $c>0$ such that the following hold for each~$k \ge j$.
 \[
  \p{\left[ M^{-1}2^{-j\dcle}N_{j,i} \le \meas{\CS^{\Upsilon,\separated}_{j,i}}{\Upsilon} \le M2^{-j\dcle}N_{j,i} \mid \CF_{j,i,\alpha} \right]} \ge 1-cN_{j,i}^{-b}
 \]
 and
 \[
  \p{\left[ M^{-1}2^{-j\dcle}N_{j,i} \le \measapprox{k}{\CS^{\Upsilon,\separated}_{j,i}}{\Upsilon} \le M2^{-j\dcle}N_{j,i} \mid \CF_{j,i,\alpha} \right]} \ge 1-cN_{j,i}^{-b} .
 \]
\end{lemma}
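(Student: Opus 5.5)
Conditionally on $\CF_{j,i,\alpha}$, the configurations inside the squares $Q \in \CS_{j,i}$ are independent multichordal \clek{}s, as explained in Section~\ref{se:setup_pf}. Moreover, the set $\CS^{\Upsilon,\separated}_{j,i}$, and hence $N_{j,i}$, is $\CF_{j,i,\alpha}$-measurable. So $\meas{\CS^{\Upsilon,\separated}_{j,i}}{\Upsilon}$ is a sum of $N_{j,i}$ conditionally independent nonnegative random variables $X_Q = \meas{Q}{\Upsilon}$, and similarly for $\measapprox{k}{\cdot}{\Upsilon}$. I would prove both inequalities by a standard concentration argument for such sums. The needed inputs are uniform upper moment bounds for $2^{j\dcle}X_Q$ and a uniform lower bound on $\p[2^{j\dcle}X_Q \ge c_0 \mid \CF_{j,i,\alpha}]$, both valid for separated squares.

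\textbf{Upper bound.} By scaling, $2^{j\dcle}X_Q$ is the measure of a unit-size square under a multichordal \clek{} in a rescaled domain whose marked points are $\dsep$-separated. I would bound its $n$-th conditional moment by a constant $C_n(\dsep)$ depending only on $n$ and $\dsep$. For the gasket measure, the plan is to transfer Proposition~\ref{prop:cle_measure_moments} from ordinary \clek{} via the Markovian exploration comparison used in Lemma~\ref{lem:gasket_measure_first_moment_mcle}: with probability at least $p_0(\dsep)$ one obtains the given marked domain, and the exploration stays outside the region where $Q$ lies, so moments are inflated by at most $p_0^{-1}$. One subtlety is that Proposition~\ref{prop:cle_measure_moments} only controls moments on $E_M$. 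Since $\p[E_M^c]$ decays polynomially with arbitrary power, one can instead control tails $\p[2^{j\dcle}X_Q > t]=O(t^{-b'})$ for any $b'$, which suffices. For $\measapprox{k}{\cdot}{\Upsilon}$ I would use Lemma~\ref{le:approxmeas_moments} combined with the same comparison.

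With uniform tails $O(t^{-b'})$ in hand, I would center the variables and apply Markov's inequality to the $p$-th moment of the centered sum. Rosenthal's inequality gives $\E|\sum (Y_Q-\E Y_Q)|^p \lesssim N^{p/2}$, so $\p[\sum Y_Q > 2C_1 N] \lesssim N^{-p/2}$. Taking $p=2b$ and $M \ge 2C_1$ gives the upper bound with error $cN_{j,i}^{-b}$.

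\textbf{Lower bound.} I need a uniform $p_1>0$ and $c_0>0$ such that $\p[2^{j\dcle}X_Q \ge c_0 \mid \CF_{j,i,\alpha}] \ge p_1$ for every $Q \in \CS^{\Upsilon,\separated}_{j,i}$. For $\mu$, this follows from Lemma~\ref{le:meas_nonzero} together with Lemma~\ref{le:mcle_abs_cont}. Here $Q \in \CS^\Upsilon_j$ ensures that the relevant gasket (the even arcs connected to $\Upsilon$) is present. The positive-probability event ``$\meas{Q'}{\Upsilon} \ge c_0$'' for a fixed subsquare $Q' \subseteq A_Q^\inside$ of ordinary \clek{} then transfers with uniform probability, using local compactness in $\dsep$. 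For the approximations I would do the same using $\measapprox{k}{}{}$'s lower moment bound in Lemma~\ref{le:approxmeas_moments} combined with Paley--Zygmund, which again gives a positive probability uniformly in $k$. Then $\#\{Q: 2^{j\dcle}X_Q\ge c_0\}$ stochastically dominates a $\mathrm{Bin}(N_{j,i},p_1)$ variable, and Chernoff gives $\p[\sum X_Q < c_0 p_1 N 2^{-j\dcle}/2] \le e^{-cN}$, which is much better than needed.

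\textbf{Main obstacle.} The delicate step is the uniformity of the transfer from ordinary \clek{} to the conditional multichordal law, both for tails (the upper bound) and for positivity (the lower bound). In particular one must handle that $\meas{Q}{\Upsilon}$ involves the gasket near $\partial Q$, whereas the multichordal law lives only in $(3/2)Q$. I would resolve this by noting that $Q \subseteq A_Q^\inside$ at positive distance, so Lemma~\ref{le:mcle_abs_cont} applies with a fixed compact set (after rescaling) and the separation constant $\dsep$.
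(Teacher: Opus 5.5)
Your proposal is correct and takes essentially the paper's route. The paper deduces this lemma from two per-square statements combined with conditional independence given $\CF_{j,i,\alpha}$: a uniform lower bound $\p[\meas{Q}{\Upsilon}\ge M^{-1}2^{-j\dcle}\mid\CF_{j,i,\alpha}]\ge p$ (Lemma~\ref{le:meas_cond_lb}, via positivity of the measure and uniform control of the multichordal law under $\dsep$-separation), and a uniform superpolynomial upper tail (Lemma~\ref{le:meas_cond_ub}, via Proposition~\ref{prop:cle_measure_moments} and the exploration comparison of Lemma~\ref{lem:gasket_measure_first_moment_mcle}). Your Rosenthal and Chernoff steps spell out the concentration argument that the paper leaves implicit.
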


Lemma~\ref{le:meas_large_dev} follows from the following two lemmas which give respectively the lower and upper bound.

\begin{lemma}\label{le:meas_cond_lb}
 For each $\dsep > 0$ there exist $M>1$ and $p>0$ such that the following hold for each~$Q \in \CS^{\Upsilon,\separated}_{j,i}$ and~$k \ge j$.
 \[
  \p{\left[ \meas{Q}{\Upsilon} \ge M^{-1}2^{-j\dcle} \mid \CF_{j,i,\alpha} \right]} \ge p
 \]
 and
 \[
  \p{\left[ \measapprox{k}{Q}{\Upsilon} \ge M^{-1}2^{-j\dcle} \mid \CF_{j,i,\alpha} \right]} \ge p .
 \]
\end{lemma}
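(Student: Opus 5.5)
The plan is to reduce to a uniform positivity statement for a single multichordal \clek{} in a fixed reference configuration, and then to use compactness in the space of separated marked domains. Fix $Q \in \CS^{\Upsilon,\separated}_{j,i}$. Given $\CF_{j,i,\alpha}$, the configuration inside $A_Q^\inside$ is a multichordal \clek{} in the unexplored domain $V_Q = (2Q)^{*,(3/2)Q}$. Its marked points are at mutual distance at least $\dsep 2^{-j}$, and it carries a given interior link pattern $\alpha$. Moreover, these configurations for different $Q \in \CS_{j,i}$ are conditionally independent. By the scaling~\eqref{eq:approxmeas_scaling}, the scaling of $\mu$, and translation invariance, we may rescale by $2^j$. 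We must then show the following: there exist $M,p$, depending only on $\dsep$, such that for every marked domain $(V;\ul{x};\alpha)$ arising this way (with $V \supseteq (3/2)Q_0$ for the unit square $Q_0$, separated marked points, and the gasket of the outer component reaching into $(3/2)Q_0$), the multichordal \clek{} satisfies $\mu(Q_0;\Upsilon_\CI) \ge M^{-1}$ with probability at least $p$. The same must hold for $\measapprox{k-j}{Q_0}{\Upsilon_\CI}$, uniformly in $k-j \ge 0$. Here $\Upsilon_\CI$ is the portion of $\Upsilon$ inside, i.e.\ the gaskets of the components of $V\setminus\ul{\eta}$ attached to the wired arcs connected to $\Upsilon$ outside. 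The number of marked points is bounded in terms of $\dsep$.

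For $\mu$, I would argue as follows. Lemma~\ref{le:meas_nonzero}, together with scaling, gives that for an ordinary \clek{} in a domain containing $Q_0$ the outer gasket has positive mass in a fixed smaller square $Q_0' \subseteq Q_0$. Hence there are $M_0,p_0$ with $\p[\mu(Q_0';\Upsilon) \ge M_0^{-1}] \ge p_0$. The event $\{\mu(Q_0';\cdot) \ge M^{-1}\}$ is measurable with respect to $(\Gamma|_{K},\Upsilon\cap K)$ for a compact $K \supseteq \ol{Q_0'}$ inside $(3/2)Q_0$. This uses locality of the measure, namely the second bullet property of \cite{ms2022clemeasure}. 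Lemma~\ref{le:mcle_abs_cont}, applied to the complementary event, then transfers a lower bound $p_0' > 0$ to the multichordal law. This works uniformly over marked domains with separation $\dsep$, provided the domains range over a Carath\'eodory-compact family. The conditioning on $\alpha$ is covered by the final sentence of Lemma~\ref{le:mcle_abs_cont}.

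For $\measapprox{k}{}{}$, I would use the same argument with the event $\{\measapprox{k-j}{Q_0'}{\Upsilon}\ge M^{-1}\}$ for the \clek{} in the reference domain. The lower bound on the probability must now be uniform in $k-j$. I would obtain this from Lemma~\ref{le:approxmeas_moments} by Paley--Zygmund: since $\E[X]\ge c_1^{-1}$ and $\E[X^2]\le c_2$, we get $\p[X \ge (2c_1)^{-1}] \ge (4c_1^2c_2)^{-1}$. Locality of $\measapprox{}{}{}$ holds for the Euclidean schemes, and for the metric schemes it follows from the locality of the internal metrics. This makes the event measurable with respect to the data near $Q_0'$.

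The main obstacle is uniformity in the absolute continuity transfer. Lemma~\ref{le:mcle_abs_cont} only states that small probability maps to small probability. We need the converse direction: probability at least $p_0$ under $\p_D$ should imply probability at least $p$ under the multichordal law. Applied to complements, the lemma gives exactly this, but only if $\p_D$ of the complement is below $\delta(\varepsilon)$, which is not the case here. I would fix this by boosting first. Taking many disjoint subsquares of $Q_0'$ and using independence across scales (Lemma~\ref{le:separation_event} together with the Markov property), the probability that all of them fail becomes smaller than $\delta$. The lemma then yields probability at least $1-\varepsilon$ in the multichordal setting. A second subtlety is that the domain $V$ need not be uniformly bounded away from $(3/2)Q_0$ in Carath\'eodory sense, only containing it. I would handle this by noting that $\Gamma$ restricted to $K$ depends only on the law inside, and that Lemma~\ref{le:mcle_abs_cont} is stated for fixed $D$ with separated marked points. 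So I would conformally map $V$ to a fixed domain and use Koebe distortion on $K$ to control the distortion of $Q_0'$ and of the $\dcle$-scaling factor.
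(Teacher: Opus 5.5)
Your route (prove a lower bound for an ordinary \clek{} and transfer it to the conditional multichordal law with Lemma~\ref{le:mcle_abs_cont}, using Paley--Zygmund with Lemma~\ref{le:approxmeas_moments} for uniformity in $k$) is viable. However, the ``main obstacle'' you identify does not exist, and the boosting you propose to get around it would fail.

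The clause ``and vice versa'' in Lemma~\ref{le:mcle_abs_cont} says the following: for every $\varepsilon>0$ there is $\delta>0$ such that $\mcclelaw{D;\ul{x};\alpha}[E(\Upsilon_{\CI})]<\delta$ forces $\p_D[E(\Upsilon)]<\varepsilon$, uniformly over $E\in\CE_K$ and $\dsep$-separated marked points. Apply it to your event $F=\{\meas{Q_0'}{\cdot}\ge M_0^{-1}\}$ itself, not to its complement, and take $\varepsilon=p_0$. Since $\p_D[F(\Upsilon)]\ge p_0$, you get $\mcclelaw{D;\ul{x};\alpha}[F(\Upsilon_{\CI})]\ge\delta(p_0)$, which is exactly the bound you need. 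This is also how the paper passes from the definition of $\CJ$ to~\eqref{eq:ub_improve}.

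The boosting, by contrast, is false. With probability bounded below, independently of how finely you subdivide, an outermost loop surrounds $\ol{Q_0'}$. Then $\Upsilon\cap Q_0'=\varnothing$ and all subsquares fail at once. The same holds for $\Upsilon_{\CI}$, so a multichordal success probability $1-\varepsilon$ for small $\varepsilon$ is simply not true. Independence across scales concerns nested annuli around one point and cannot undo a disconnection of the outer gasket.

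The same phenomenon affects your first step. Lemma~\ref{le:meas_nonzero} gives $\meas{D}{\Upsilon}>0$, not positive mass in the fixed square $Q_0'$. What you actually need is $\p_D[\meas{Q_0'}{\Upsilon}>0]>0$. This follows, for example, because the density of $\E[\meas{\cdot}{\Upsilon}]$ (see the proof of Lemma~\ref{lem:gasket_measure_first_moment}) is strictly positive by conformal covariance together with Lemma~\ref{le:meas_nonzero}.

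With that repair, your argument genuinely differs from the paper's. The paper stays in the multichordal setting throughout. It uses a uniform lower bound, taken from the resampling paper, on the probability that the gasket attached to a given boundary arc reaches $Q$. It then uses a.s.\ positivity of the mass on that event, and makes $M,p$ uniform via continuity of the law in the marked domain (Carath\'eodory topology) plus compactness. The approximate measures are dismissed as ``similar''.

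Your version hides that continuity and compactness inside Lemma~\ref{le:mcle_abs_cont}, whose proof rests on the same continuity result, and it only needs facts about the unconditioned \clek{}. Your Paley--Zygmund step is a clean way to obtain the uniformity in $k$ that the paper leaves implicit. The price is that you must check the events lie in $\CE_K$ after normalizing $V$ to a fixed reference domain, with the separation hypothesis surviving the conformal map. For the metric ball counts this is only approximately true, since balls of the global metric are not functions of $\Gamma|_K$. You would need to intersect with an event on which the relevant balls stay inside $K$.
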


\begin{proof}
By \cite[Lemma~3.10 and Theorem~1.7]{amy-cle-resampling}, there is some $p>0$ (depending only on $\dsep$) such that for each marked point configuration that is $\dsep$-separated, each interior link pattern, and each choice of boundary interval, the probability that the gasket connected to that boundary interval intersects $Q$ is at least $p$. The measure $\meas{Q}{\Upsilon}$ is almost surely positive on this event by Lemma~\ref{le:meas_nonzero}, and by \cite[Proposition~6.3]{amy-cle-resampling} its law depends continuously on the marked domain. This together with the scale covariance gives the desired statement for $\meas{Q}{\Upsilon}$. The statement for $\measapprox{k}{Q}{\Upsilon}$ follows similarly from Lemma~\ref{le:approxmeas_moments}.
\end{proof}

\begin{lemma}\label{le:meas_cond_ub}
 For each $\dsep > 0$ and $b>0$ there exists $c>0$ such that the following hold for each $Q \in \CS^{\Upsilon,\separated}_{j,i}$ and $k \ge j$.
 \[
  \p[ \meas{Q}{\Upsilon} \ge M2^{-j\dcle} \mid \CF_{j,i,\alpha} ] \le cM^{-b}
 \]
 and
 \[
  \p[ \measapprox{k}{Q}{\Upsilon} \ge M2^{-j\dcle} \mid \CF_{j,i,\alpha} ] \le cM^{-b} .
 \]
\end{lemma}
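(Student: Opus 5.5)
Since the conditional law given $\CF_{j,i,\alpha}$ of the configuration inside $A_Q^\inside$ is a multichordal \clek{} $\mcclelaw{D';\ul{x};\alpha}$ in a marked domain whose marked points are $\dsep 2^{-j}$-separated, the plan is to reduce both estimates to unconditional moment bounds via conformal/scale covariance and the comparison arguments already used for Lemma~\ref{lem:gasket_measure_first_moment_mcle}. Then we apply Markov's inequality with a high moment. First rescale by $2^j$. By~\eqref{eq:approxmeas_scaling} and the exact scaling $\meas{2^{-j}\cdot}{2^{-j}\Upsilon}=2^{-j\dcle}\meas{\cdot}{\Upsilon}$, it suffices to prove that for a unit square $Q$ and a $\dsep$-separated marked domain $(D';\ul{x};\alpha)$ (with $D' \supseteq (3/2)Q$ and $\partial D' \subseteq \ol{A_Q}$), the $n$-th moments of $\meas{Q}{\Upsilon}$ and $\measapprox{k}{Q}{\Upsilon}$ under $\mcclelaw{D';\ul{x};\alpha}$ are bounded by a constant $c_n$. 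The constant must depend only on $n$ and $\dsep$, and must be uniform in $k\ge 0$ and in the marked domain. Here $\Upsilon$ is the union of the gaskets in the complementary components of $\ul{\eta}$ that are connected to the wired arcs in question. Markov's inequality with $n=b$ then gives the claimed bound $cM^{-b}$.

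To bound the moments, I would reuse the transfer argument of Lemma~\ref{lem:gasket_measure_first_moment_mcle}. By Proposition~\ref{pr:link_probability}, the probability of the interior link pattern $\alpha$ under $\mcclelaw{D';\ul{x};\beta_0}$ is bounded below uniformly over $\dsep$-separated configurations. Continuity and compactness of the Carath\'eodory topology help here, and the number of marked points can be taken bounded by working on the separation event. So it suffices to bound moments under $\mcclelaw{D';\ul{x};\beta_0}$. By \cite[Proposition~1.9, Lemma~3.10, Proposition~3.15]{amy-cle-resampling}, this law arises with probability at least $p_0(\dsep)>0$ as the remainder of a Markovian exploration of an ordinary \clek{} $\wt\Gamma$ in a domain comparable to $D'$. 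That exploration stops before reaching $Q$, and under it $\Upsilon\cap Q$ is contained in $\wt\Upsilon_{\le 2}\cap 2Q$. Hence the $n$-th moments are bounded by $p_0^{-1}$ times the corresponding moments for the nested \clek{}.

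For $\mu$, Proposition~\ref{prop:cle_measure_moments} only provides moments restricted to $E_M$. I would bypass this by noting that $\E[\meas{2Q}{\wt\Upsilon_{\le2}}^n\one_{E_{M}}]\le M$ with $\p[E_M^c]\le cM^{-b'}$. Instead of full moments, I would then prove directly the tail bound $\p[\meas{Q}{\Upsilon}\ge M]\le \p[E_{M^{1/2}}^c]+M^{-n/2}$ with $n$ large; this is all we need. For $\measapprox{k}{\cdot}{\cdot}$ the same structure holds via Lemma~\ref{le:approxmeas_moments}, using the $\one_{E_M}$ version from its proof. For the intrinsic variants~\eqref{it:geo_count}--\eqref{it:res_count}, which are functionals of $\Gamma|_{2Q}$ by locality, the same transfer applies.

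The main obstacle is the transfer step. The exploration coupling produces the multichordal law only on an event of probability $p_0$, and the gasket $\Upsilon_\CI$ must be dominated by a gasket of the original \clek{} on $2Q$ even though the domain boundary lies close to $Q$. Lemma~\ref{lem:gasket_measure_first_moment} requires $B(z,2r)\subseteq D$. To handle this, I would first enlarge: rescale so that $Q$ sits well inside the explored domain, using that the $\dsep$-separation guarantees the exploration can be stopped before hitting $(3/2)Q$. Where domains differ, I would use Lemma~\ref{le:mcle_abs_cont} to transfer the small-probability events $E_M^c$.
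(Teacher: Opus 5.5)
Your proposal is correct and is essentially the paper's own argument. The paper likewise transfers the $\one_{E_M}$-restricted moment bounds of Proposition~\ref{prop:cle_measure_moments} (for $\meas{Q}{\Upsilon}$), and those from the proof of Lemma~\ref{le:approxmeas_moments} (for $\measapprox{k}{Q}{\Upsilon}$), to the separated multichordal conditional law, using the link-pattern and exploration comparison from the proof of Lemma~\ref{lem:gasket_measure_first_moment_mcle}, and then applies Markov's inequality.

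One caution: absorb any remaining mismatch of domains through the $p_0^{-1}$ comparison together with conformal covariance and Koebe distortion, as in Lemma~\ref{lem:gasket_measure_first_moment}. Do not use Lemma~\ref{le:mcle_abs_cont} for this: it is purely qualitative and would not preserve the polynomial rate $cM^{-b}$ when transferring $E_M^c$.
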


\begin{proof}
The upper tail for $\meas{Q}{\Upsilon}$ follows from Proposition~\ref{prop:cle_measure_moments} by the same argument as in the proof of Lemma~\ref{lem:gasket_measure_first_moment_mcle}. The upper tail for $\measapprox{k}{Q}{\Upsilon}$ follows similarly from Lemma~\ref{le:approxmeas_moments}.
\end{proof}

\begin{lemma}\label{le:comparability_tails}
 There exists $a>0$ such that the following is true. Let $B \in \CS$. For each $b>0$ there exists $M>1$ such that
 \[
  \p{\left[ \frac{\measapprox{k}{B}{\Upsilon}}{\meas{B}{\Upsilon}} \notin [M^{-1},M] ,\, \Upsilon \cap (1-2^{-ak})B \neq \varnothing \right]} = O(e^{-bk}) .
 \]
\end{lemma}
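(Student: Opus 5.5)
We decompose $B$ at an intermediate scale and apply Lemma~\ref{le:meas_large_dev} to both measures simultaneously. Fix $B \in \CS_l$ and let $k$ be large. Choose an intermediate scale $j = \lceil (1-a')k \rceil$ with $a' > 0$ small, to be fixed later relative to $a$. Both $\meas{B}{\Upsilon}$ and $\measapprox{k}{B}{\Upsilon}$ will be compared with $2^{-j\dcle} N_j$, where $N_j$ counts only the subsquares of $B$ in $\CS^{\Upsilon,\separated}_j$.

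\textbf{Step 1: the separated squares carry almost all the mass.} Split the squares of $\CS_j$ inside $B$ into $\CS^{\Upsilon,\separated}_j$ and the rest. For the rest, there are two cases.
\begin{enumerate}[(a)]
 \item Squares $Q$ with $\Upsilon \cap A_Q^\inside = \varnothing$ contribute nothing to $\meas{\cdot}{\Upsilon}$. Up to boundary effects they also contribute nothing to the approximations, which are supported near $\Upsilon$; for the ball counts on $G^1_k$ this follows as in Lemma~\ref{le:approx_additivity}.
 \item For squares in $\CS^\Upsilon_j \setminus \CS^\separated_j$, we use Lemma~\ref{le:good_superbox}. A nonseparated square at scale $j$ has, with overwhelming probability, a separated ancestor at a scale $j'' \in [(1-a')j, j]$.
\end{enumerate}
Combined with the moment bounds (Proposition~\ref{prop:cle_measure_moments}, Lemma~\ref{le:approxmeas_moments}) and a union bound over the $O(2^{2j})$ squares, this gives that the total $\mu$-mass and $\mu_k$-mass of non-separated squares is at most a $2^{-\epsilon j}$ fraction of $2^{-j\dcle}N_j$, outside an event of probability $O(e^{-bk})$. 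The approximate additivity from Lemma~\ref{le:approx_additivity} handles the discrepancy between $\measapprox{k}{B}{\Upsilon}$ and $\sum_Q \measapprox{k}{Q}{\Upsilon}$. That error is $2^{j-k(\dcle-1-a)}$, which is negligible against $2^{-j\dcle}$ once $a'$ is small.

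\textbf{Step 2: concentration on the separated squares.} For each $i=1,\dots,4$, apply Lemma~\ref{le:meas_large_dev} conditionally on $\CF_{j,i,\alpha}$. On an event of conditional probability at least $1-cN_{j,i}^{-b}$, both $\meas{\CS^{\Upsilon,\separated}_{j,i}}{\Upsilon}$ and $\measapprox{k}{\CS^{\Upsilon,\separated}_{j,i}}{\Upsilon}$ lie in $[M^{-1},M]\cdot 2^{-j\dcle}N_{j,i}$. Summing over $i$, the indices with small $N_{j,i}$ are dominated by the others. Hence both measures of $B$ lie within a factor $M^2$ of $2^{-j\dcle}N_j$, and the ratio lies in $[M^{-4},M^4]$. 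For this bound to be $O(e^{-bk})$, we need $N_{j}$ to be exponentially large in $k$.

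\textbf{Step 3 (main obstacle): $N_j$ is exponentially large.} Here the hypothesis $\Upsilon \cap (1-2^{-ak})B \neq \varnothing$ enters. Take $x \in \Upsilon$ at distance at least $2^{-ak}$ from $\partial B$. The gasket is connected, and its connected piece through $x$ must exit the ball of radius $2^{-ak}$ around $x$; the one exception is when $\Upsilon$ is locally surrounded, but since $\Upsilon$ is the outermost gasket, its component containing $x$ connects to $\partial D$. So this piece crosses the annulus from scale $2^{-ak}$ to scale $2^{-j}$.

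By Lemma~\ref{le:separation_event} applied along scales, combined with a crossing argument (an arm from $x$ across $\sim (1-a'-a)k$ dyadic scales), I expect to show the following. Outside probability $O(e^{-bk})$, at least $2^{\epsilon k}$ squares of $\CS_j$ along this arm lie in $\CS^{\Upsilon,\separated}_j$. This count alone is a connectivity lower bound; one could even use the diameter of the crossing, of order $2^{j-ak}$ squares, as a lower bound on the number of squares met. To make this uniform over $x$, take a union bound over $x$ in a $2^{-j}$-grid and use Lemma~\ref{le:good_superbox} in each of the $O(2^{2j})$ squares visited to guarantee separation at a positive fraction of them.

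With $N_j \ge 2^{\epsilon k}$, the failure probability $cN_{j,i}^{-b}$ becomes $2^{-\epsilon b k}$. Since $b$ is arbitrary, this gives the claim. The delicate point is choosing $a$, $a'$, $\epsilon$ consistently so that all the error terms are dominated.
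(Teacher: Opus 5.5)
\textbf{There is a genuine gap.} It sits in Step 1, and Step 3 inherits it. At a single fixed scale $j$ you cannot make the non-separated squares negligible.

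For fixed $\dsep$, a square hit by $\Upsilon$ is non-separated with a probability that does not decay in $j$: by scale invariance it is bounded below by a positive constant. Typically, then, a positive fraction of $\meas{B}{\Upsilon}$ sits in $\CS^\Upsilon_j \setminus \CS^\separated_j$, and your claimed $2^{-\epsilon j}$ bound is false.

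Even the weaker statement you would actually need is not available from the tools. That statement is: the non-separated mass is at most a constant times $2^{-j\dcle}N_j$, off an event of probability $O(e^{-bk})$. The separation events of all $Q \in \CS_{j,i}$ are functions of the same exploration $\CF_{j,i}$. Lemmas~\ref{le:separation_event} and~\ref{le:good_superbox} only give decorrelation \emph{across scales} at a fixed location, not a spatial large-deviation bound at one scale. Knowing that a non-separated square has a separated ancestor says nothing about its own mass.

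The same problem breaks Step 3. Applying Lemma~\ref{le:good_superbox} to the squares visited by your arm gives separated \emph{ancestors at varying scales}. It does not give separation of a positive fraction of the visited squares at the fixed scale $j$.

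\textbf{What the paper does instead.} The missing idea is to average over a window of scales rather than fix one.
\begin{enumerate}[(1)]
\item The paper applies Lemma~\ref{le:good_superbox} to every $Q \in \CS_{k/2}$ with $Q\subseteq B$, over $j \in [k/3,k/2]$. Summing masses gives $\sum_j \meas{\CS^{\Upsilon,\separated}_j}{\Upsilon} \ge \frac34\cdot\frac{k}{6}\meas{B}{\Upsilon}$, and likewise for $\measapprox{k}{\cdot}{\Upsilon}$.
\item By pigeonhole, for a positive fraction of $j$ the separated squares carry at least half of both measures of $B$. So the non-separated part never needs to be controlled at all.
\item The lower bound on $N_j$ then needs no connectivity or arm argument. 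By Proposition~\ref{pr:measure_lb}, on $\{\Upsilon\cap(1-2^{-ak})B\neq\varnothing\}$ one has $\meas{B}{\Upsilon}\ge 2^{-4ak}$ off an $o^\infty(e^{-k})$ event.
\item Each separated square has $\mu$-mass at most $M2^{-j\dcle}$, by Lemma~\ref{le:meas_cond_ub} and a union bound. Since separated squares carry at least half of $\meas{B}{\Upsilon}$, this forces $N_j \ge 2^{j\dcle-5ak}\ge 2^{k(\dcle/3-5a)}$. This is where the restriction $a<1/15$ comes from.
\item Lemma~\ref{le:meas_large_dev} is then applied at such a scale $j$.
\end{enumerate}

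\textbf{A secondary error: your choice of $j$.} Taking $j=\lceil(1-a')k\rceil$ with $a'$ small is incompatible with Lemma~\ref{le:approx_additivity} for the ball-count schemes.
\begin{itemize}
\item The additivity error $2^{j-k(\dcle-1-a)}$ must be compared with the total mass of $B$, which is only known to be at least $2^{-4ak}$. Comparing it with $2^{-j\dcle}$, as you do, is the wrong benchmark.
\item Since $\dcle<2$, this error is exponentially \emph{large} when $j$ is close to $k$.
\item It is small compared with $2^{-4ak}$ only if $j<(\dcle-1-5a)k$. So ``once $a'$ is small'' is backwards, and this is why the paper works with $j\le k/2$.
\end{itemize}
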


\begin{proof}
Let $G_k$ be the event that for each $Q \in \CS_{k/2}$ with $Q \subseteq B$, for at least $3/4$ fraction of $j=k/3,\ldots,k/2$ we have $Q_j \in \CS^{\separated}_j$ where $Q_j$ is the square containing $Q$. Then (for a suitable choice of $\dsep$) we have $\p[(G_k)^c] = O(e^{-bk})$ by Lemma~\ref{le:good_superbox}. Suppose that we are on the event $G_k$. Then
\[
 \sum_{j=k/3,\ldots,k/2} \meas{\CS^{\Upsilon,\separated}_{j}}{\Upsilon} = \sum_{j=k/3,\ldots,k/2} \sum_{Q \in \CS^\Upsilon_{k/2}} \meas{Q}{\Upsilon} \one_{Q_{j} \in \CS^{\Upsilon,\separated}_{j}} \ge \frac{3}{4}\frac{k}{6} \meas{B}{\Upsilon} .
\]
The same is true for $\measapprox{k}{\cdot}{\Upsilon}$. In particular, for at least $1/2$ fraction of $k/3 \le j \le k/2$ we have $\meas{\CS^{\Upsilon,\separated}_{j}}{\Upsilon} \ge \frac{1}{2} \meas{B}{\Upsilon}$ and similarly $\measapprox{k}{\CS^{\Upsilon,\separated}_{j}}{\Upsilon} \ge \frac{1}{2} \measapprox{k}{B}{\Upsilon}$.

By Lemma~\ref{le:meas_large_dev} we have for large enough $M$ that
\[
 \p{\left[ \frac{\measapprox{k}{\CS^{\Upsilon,\separated}_j}{\Upsilon}}{\meas{\CS^{\Upsilon,\separated}_j}{\Upsilon}} \notin [M^{-1},M] \right]} = O(\E[N_j^{-b}]) .
\]
To lower bound $N_j$, note that
\[ \p[ \meas{B}{\Upsilon} \le 2^{-4ak} ,\, \Upsilon \cap (1-2^{-ak})B \neq \varnothing ] = o^\infty(e^{-k}) \]
by Proposition~\ref{pr:measure_lb}. Moreover, by Lemma~\ref{le:meas_cond_ub} we have
\[
 \p{\left[ N_j < 2^{j\dcle-5ak} ,\, \meas{\CS^{\Upsilon,\separated}_{j}}{\Upsilon} \ge \frac{1}{2} \meas{B}{\Upsilon} \ge 2^{-4ak} \right]} = o^\infty(e^{-k}) .
\]
Supposing that $a < 1/15$ (say), we conclude that
\[
 \p{\left[ \frac{\measapprox{k}{\CS^{\Upsilon,\separated}_j}{\Upsilon}}{\meas{\CS^{\Upsilon,\separated}_j}{\Upsilon}} \notin [M^{-1},M] ,\, \meas{\CS^{\Upsilon,\separated}_{j}}{\Upsilon} \ge \frac{1}{2} \meas{B}{\Upsilon} \right]} = O(e^{-bk})
\]
for each $j=k/3,\ldots,k/2$. Incorporating Lemma~\ref{le:approx_additivity} and taking a union bound over $j$ gives the result.
\end{proof}

\begin{proof}[Proof of Lemma~\ref{le:comparability_on_squares}]
 As discussed above, it suffices to prove this for a specific choice of $B \in \CS$. This follows immediately from Lemma~\ref{le:comparability_tails}.
\end{proof}

\subsection{Identification of the limit}
\label{se:meas_limit}

Next, we are going to show that the statement of Theorem~\ref{th:minkowski} holds with some additional constants $(c_k)$ and for the convergence in probability instead of almost surely.

\begin{lemma}\label{le:in_prob}
There exists a deterministic sequence $(c_k)$ with $0 < \liminf_{k\to\infty} c_k \le \limsup_{k\to\infty} c_k < \infty$ such that for each $B \in \CS$ we have $c_k\measapprox{k}{B}{\Upsilon} \to \meas{B}{\Upsilon}$ in probability.
\end{lemma}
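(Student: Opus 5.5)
The constants $c_k$ will be defined through expectations, and the ratio will be forced to be asymptotically deterministic by the near-independence of many small squares. Fix a reference square, say $B_0 \in \CS_1$, and set
\[
 c_k = \frac{\E[\meas{B_0}{\Upsilon}]}{\E[\measapprox{k}{B_0}{\Upsilon}]} .
\]
Here one should work with a normalized setup so that the expectations are conformally comparable, for instance conditioning on a separated multichordal configuration. By Lemma~\ref{le:approxmeas_moments} and Lemma~\ref{lem:gasket_measure_first_moment}, $c_k$ is bounded above and below, which gives $0<\liminf c_k\le\limsup c_k<\infty$.

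To show $c_k\measapprox{k}{B}{\Upsilon}-\meas{B}{\Upsilon}\to 0$ in probability, choose an intermediate scale $j$ with $1\ll j\ll k$ and decompose $B$ into squares of $\CS_j$. By Lemma~\ref{le:approx_additivity}, $\measapprox{k}{B}{\Upsilon}$ equals $\sum_{Q\in\CS_j}\measapprox{k}{Q}{\Upsilon}$ up to a negligible error, and $\mu$ is exactly additive. Split the sum according to the four classes $\CS_{j,i}$ and into separated versus non-separated squares. The non-separated squares, together with those where $\Upsilon$ meets $Q$ but not $A_Q^\inside$, will be shown to carry a small fraction of both masses. For this I would use Lemma~\ref{le:good_superbox}, argued as in the proof of Lemma~\ref{le:comparability_tails}: averaging over a range of scales $j$ shows that for most $j$ the separated squares carry almost all of the mass. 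One can also pass to a further nested scale, so that the non-separated part at scale $j$ is again decomposed at a finer scale where most subsquares are separated.

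For the separated squares in a fixed class $\CS_{j,i}$, condition on $\CF_{j,i,\alpha}$. The configurations inside distinct squares are then conditionally independent multichordal \clek{}, each with $\dsep$-separated marked points. For each such $Q$, scaling by $2^{j}$ gives a configuration at unit scale, and
\[
 D_Q = c_k\measapprox{k}{Q}{\Upsilon}-\meas{Q}{\Upsilon}
\]
has conditional mean $2^{-j\dcle}e_{k-j}(\omega_Q)$. Here $e_m(\omega)$ is the difference between the normalized expected approximate measure and the expected true measure in the marked domain $\omega$. Conditional second moments are $O(2^{-2j\dcle})$ by Lemma~\ref{le:meas_cond_ub}. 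A conditional variance bound then gives
\[
 \sum_Q \bigl(D_Q-\E[D_Q\mid\CF_{j,i,\alpha}]\bigr)=O\bigl(N_{j,i}^{1/2}2^{-j\dcle}\bigr) ,
\]
which is $o(\meas{B}{\Upsilon})$ because $N_{j,i}2^{-j\dcle}\asymp\meas{}{}$ by Lemma~\ref{le:meas_large_dev}, and $N_{j,i}\to\infty$ on $\{\meas{B}{\Upsilon}>0\}$.

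The main obstacle is the conditional means. I need $e_m(\omega)\to 0$ as $m\to\infty$, uniformly over separated marked domains $\omega$, and not only for the reference configuration that defines $c_k$. The plan is to show that the ratio $\E_\omega[\measapprox{m}{\cdot}{\Upsilon}]/\E_\omega[\meas{\cdot}{\Upsilon}]$ becomes asymptotically independent of $\omega$. First, apply the same decomposition inside $\omega$ at a scale $1\ll j'\ll m$. Both expectations are then sums over subsquares, and the interior subsquares are, after exploring to a separated scale, multichordal configurations whose law depends on $\omega$ only through a mixture. Second, use Lemma~\ref{le:mcle_abs_cont} together with the Carath\'eodory continuity from \cite[Proposition~6.3]{amy-cle-resampling} to show that the ratio for a small subsquare is an average of the ratios over separated configurations. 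This yields a contraction: the spread $\sup_\omega r_m(\omega)-\inf_\omega r_m(\omega)$ at scale $m$ is bounded by a factor $(1-p)$, with $p$ from Lemma~\ref{le:meas_cond_lb}, times the spread at scale $m-j'$, plus error terms from boundary effects and non-separated scales. Iterating this bound shows that the spread tends to $0$. If the uniform contraction turns out to be awkward, a fallback is to work along subsequences, using tightness and the compactness of $[c_1,c_2]$ from Lemma~\ref{le:comparability_on_squares}. One would show that any subsequential limit of the ratio is a.s.\ constant by a 0--1 argument via the independence across scales, and then identify $c_k$ along the full sequence.
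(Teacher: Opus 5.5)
Your law-of-large-numbers step is sound: conditional independence given $\CF_{j,i,\alpha}$, second moments from Lemma~\ref{le:meas_cond_ub}, and non-separated squares handled by taking $q$ close to $1$ in Lemma~\ref{le:good_superbox}. But it only reduces the lemma to the claim that $c_k\,r_{k-j}(\omega)\to1$ uniformly over separated marked domains $\omega$, where $r_m(\omega)=\E_\omega[\measapprox{m}{\cdot}{\Upsilon}]/\E_\omega[\meas{\cdot}{\Upsilon}]$. That claim is essentially the whole lemma, and your contraction does not establish it.

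\emph{The contraction factor is unjustified.} Decomposing at scale $j'$ writes $r_m(\omega)$ as an average of $r_{m-j'}(\omega')$ against the mass-weighted law $\bar\nu_\omega$ of the subconfigurations. By itself this only gives $\inf r_{m-j'}\le r_m(\omega)\le\sup r_{m-j'}$. A factor $1-p$ needs a Doeblin-type minorization: $\bar\nu_{\omega_1}$ and $\bar\nu_{\omega_2}$ must have total-variation overlap at least $p$, uniformly in $\omega_1,\omega_2$. The $p$ of Lemma~\ref{le:meas_cond_lb} is a lower bound on the conditional probability that $\meas{Q}{\Upsilon}$ is non-degenerate, and says nothing about such an overlap. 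Such an overlap might be extracted from Lemma~\ref{le:mcle_abs_cont}, but you do not do this.

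\emph{The error terms do not vanish.} The error from non-separated subsquares does not decay in $m$: it is a fixed fraction depending on $\dsep$. So iteration gives at best $\limsup_m\mathrm{spread}\lesssim\mathrm{err}(\dsep)/p(\dsep)$, and nothing in your argument makes this small, since the overlap is only uniform for fixed $\dsep$. If instead you explore non-separated subsquares further until separation, they land at scales $m-j''$ with random $j''$. The recursion then compares ratios at \emph{different} scales, which a same-scale spread does not control. The same scale issue sits in your reduction itself: you need $r_{k-j}(\omega_Q)\approx r_k(\omega_0)$, not just uniformity in $\omega$. In the paper, a statement like $c_{3k}=c_{3k-j}+o(1)$ only emerges afterwards, in the proof of Lemma~\ref{le:nonconstant_as}.

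The paper avoids expectations and uniformity in $\omega$ altogether. Along a subsequence, $(\Gamma,(\meas{B}{\Upsilon},\measapprox{k}{B}{\Upsilon})_{B\in\CS})$ converges in law to $(\Gamma,(\meas{B}{\Upsilon},\wt\mu(B))_{B\in\CS})$. One takes the extremal constants $c_*\le c^*$ with $c_*\meas{B}{\Upsilon}\le\wt\mu(B)\le c^*\meas{B}{\Upsilon}$ a.s.\ (positive and finite by Lemma~\ref{le:comparability_on_squares}) and supposes $c_*<c^*$. Let $\CJ$ be the set of scales at which the rescaled ratio is at most $(c_*+c^*)/2$ with probability at least $1/2$. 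Lemma~\ref{le:mcle_abs_cont} transfers this \emph{probability} bound to a uniform conditional lower bound given $\CF_{j,i,\alpha}$ for every separated square. Conditional independence and Lemma~\ref{le:meas_cond_lb} then give a positive mass fraction of squares with improved ratio. Hence $\wt\mu\le(c^*-\delta)\mu$ if $\CJ$ has density at least $1/2$ in infinitely many windows $[j_0,2j_0]$, and $\wt\mu\ge(c_*+\delta)\mu$ otherwise, contradicting extremality. The sequence $c_k$ then comes from the subsequence-of-subsequence principle.

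This dichotomy on $\CJ$ is exactly what absorbs the scale dependence your contraction cannot handle. Your fallback's 0--1 argument is no substitute: $\wt\mu$ is not known to be a function of $\Gamma$, and $\wt\mu(B)/\meas{B}{\Upsilon}$ is not measurable with respect to any evidently trivial $\sigma$-algebra. The extremal-improvement argument is the missing idea.
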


\begin{proof}
We show that each subsequence $(k_\ell)$ contains a further subsequence $(k_{\ell_m})$ for which there is a constant $c>0$ such that $\measapprox{k_{\ell_m}}{B}{\Upsilon} \to c\meas{B}{\Upsilon}$ in probability for each $B \in \CS$.

By Lemma~\ref{le:comparability_on_squares}, there is a subsequence $(k_{\ell_m})$ along which the law of $(\Gamma, (\meas{B}{\Upsilon}, \measapprox{k_{\ell_m}}{B}{\Upsilon})_{B \in \CS})$ converges weakly. Let $(\Gamma, (\meas{B}{\Upsilon}, \wt{\mu}(B))_{B \in \CS})$ denote the weak limit. Note that $\wt{\mu}$ is exactly additive by Lemma~\ref{le:approx_additivity}. We aim to show that there is a constant $c>0$ such that $\wt{\mu}(B) = c\meas{B}{\Upsilon}$ for each $B \in \CS$.

Let
\[
 c_* = \sup \left\{ c>0 : \p[ \wt{\mu}(B) < c\meas{B}{\Upsilon} ] = 0 \text{ for each $B \in \CS$} \right\} ,
\]
and
\[
 c^* = \inf \left\{ c>0 : \p[ \wt{\mu}(B) > c\meas{B}{\Upsilon} ] = 0 \text{ for each $B \in \CS$} \right\} .
\]
By Lemma~\ref{le:comparability_on_squares} we have $0 < c_* \le c^* < \infty$. Suppose that $c_* < c^*$. We will derive a contradiction by showing that for some constant $\delta>0$ we have either
\[
 \wt{\mu}(B) \ge (c_*+\delta)\meas{B}{\Upsilon} \text{ for each } B \in \CS
 \quad\text{or}\quad
 \wt{\mu}(B) \le (c^*-\delta)\meas{B}{\Upsilon} \text{ for each } B \in \CS .
\]
Fix some $B_0 \in \CS$. We can assume that for each $j \in \N$ also the law of $\frac{\measapprox{k_\ell}{2^{-j}B_0}{2^{-j}\Upsilon}}{\meas{2^{-j}B_0}{2^{-j}\Upsilon}} = \frac{\measapprox{k_\ell-j}{B_0}{\Upsilon}}{\meas{B_0}{\Upsilon}}$ converges weakly (switching to a subsequence if necessary). Let $\CJ \subseteq \N$ be the set of scales $j$ such that
\[
 \limsup_{\ell\to\infty} \p{\left[ \frac{\measapprox{k_\ell}{2^{-j}B_0}{2^{-j}\Upsilon}}{\meas{2^{-j}B_0}{2^{-j}\Upsilon}} \le \frac{c_*+c^*}{2} \mmiddle| \Upsilon \cap B_0 \neq \varnothing \right]} \ge \frac{1}{2} .
\]
By Lemma~\ref{le:mcle_abs_cont}, for each $\dsep > 0$ there is $p > 0$ such that for each $j \in \CJ$ we have
\begin{equation}\label{eq:ub_improve}
 \liminf_{\ell\to\infty} \min_{Q \in \CS^{\Upsilon,\separated}_{j,i}} \p{\left[ \frac{\measapprox{k_\ell}{Q}{\Upsilon}}{\meas{Q}{\Upsilon}} \le \frac{c_*+c^*}{2} \mmiddle| \CF_{j,i,\alpha} \right]} \ge p ,
\end{equation}
and for each $j \in \N \setminus \CJ$ we have
\begin{equation}\label{eq:lb_improve}
 \liminf_{\ell\to\infty} \min_{Q \in \CS^{\Upsilon,\separated}_{j,i}} \p{\left[ \frac{\measapprox{k_\ell}{Q}{\Upsilon}}{\meas{Q}{\Upsilon}} \ge \frac{c_*+c^*}{2} \mmiddle| \CF_{j,i,\alpha} \right]} \ge p .
\end{equation}

Let $j_0 \in \N$ and let $G^1_{j_0}$ be the event that for each $Q \in \CS_{2j_0}$, for at least $7/8$ fraction of $j=j_0,\ldots,2j_0$, if we let $Q_j \in \CS_j$ be the square containing $Q$, then
\[ Q_{j} \in \CS^{\separated}_{j} \quad\text{and}\quad \meas{Q_{j}}{\Upsilon} \le M2^{-j\dcle} . \]
By Lemma~\ref{le:good_superbox}, we can choose $\dsep > 0$, $M>1$ so that $\p[(G^1_{j_0})^c] = O(e^{-bj_0})$.

Let $B \in \CS$ and let $\CS^g_j \subseteq \CS^{\Upsilon,\separated}_j$ be the subset of squares $Q \subseteq B$ such that $\meas{Q}{\Upsilon} \le M2^{-j\dcle}$. On the event $G^1_{j_0}$ we have
\[
 \sum_{j=j_0,\ldots,2j_0} \meas{\CS^g_j}{\Upsilon} \ge \frac{7}{8}j_0 \meas{B}{\Upsilon} .
\]
In particular, for at least $3/4$ fraction of $j_0 \le j \le 2j_0$ we have $\meas{\CS^g_{j}}{\Upsilon} \ge \frac{1}{2} \meas{B}{\Upsilon}$.

Suppose now that $\#(\CJ \cap [j_0,2j_0]) \ge j_0/2$. Then on $G^1_{j_0}$ at least $1/4$ fraction of $j_0 \le j \le 2j_0$ is in~$\CJ$ and satisfies $\meas{\CS^g_{j}}{\Upsilon} \ge \frac{1}{2} \meas{B}{\Upsilon}$. For $j \in \CJ$, let $E_j$ denote the event that $\meas{\CS^g_{j}}{\Upsilon} \ge \frac{1}{2} \meas{B}{\Upsilon}$.

Fix some $0 < a < \dcle$ and note that $\p[ \meas{B}{\Upsilon} \le 2^{-aj} ,\, \Upsilon \cap \lambda B \neq \varnothing ] = o^\infty(e^{-j})$ for each $\lambda \in (0,1)$ by Proposition~\ref{pr:measure_lb}. On the event $\meas{\CS^g_{j}}{\Upsilon} \ge \frac{1}{2} \meas{B}{\Upsilon} \ge 2^{-aj}$ we necessarily have $N_j \ge M^{-1}2^{j(\dcle-a)}$ (where $N_j$ is defined above Lemma~\ref{le:meas_large_dev}).

By~\eqref{eq:ub_improve} and Lemma~\ref{le:meas_cond_lb}, there is $q_2>0$ and $M>1$ (depending only on $\dsep$) such that the following hold. Let $G^2_{j,i}$ be the event that for at least $q_2$ fraction of $Q \in \CS^{\Upsilon,\separated}_{j,i}$ we have
\begin{equation}\label{eq:improved_squares}
 \frac{\measapprox{k_\ell}{Q}{\Upsilon}}{\meas{Q}{\Upsilon}} \le \frac{c_*+c^*}{2}
 \quad\text{and}\quad
 \meas{Q}{\Upsilon} \ge M^{-1}2^{-j\dcle} .
\end{equation}
Then $\p[ (G^2_{j,i})^c \mid \CF_{j,i,\alpha} ] = o^\infty(N_{j,i}^{-1})$ for sufficiently large $\ell$. We conclude that, if we let $G^2_j$ be the event that~\eqref{eq:improved_squares} is satisfied for at least $q_2$ fraction of $Q \in \CS^{\Upsilon,\separated}_{j}$, then
\[
 \p[ E_j \setminus G^2_j ] = o^\infty(e^{-j})
 \quad\text{for sufficiently large $\ell$.}
\]
Let $\CS^2_j \subseteq \CS^{\Upsilon,\separated}_{j}$ be the squares where~\eqref{eq:improved_squares} hold. On $E_j \cap G^2_j$ we have that
\[
 \meas{\CS^2_j}{\Upsilon} \ge q_2 N_j M^{-1}2^{-j\dcle} \ge q_2 M^{-2}\meas{\CS^g_j}{\Upsilon} \ge \frac{q_2}{2} M^{-2} \meas{B}{\Upsilon} .
\]
Finally, by the definition of $c^*$ we have for each $\epsilon > 0$ and each $Q \in \CS_j$ that
\[
 \lim_{\ell\to\infty} \p{\left[ \frac{\measapprox{k_\ell}{Q}{\Upsilon}}{\meas{Q}{\Upsilon}} \ge c^*+\epsilon \right]} = 0 .
\]
Therefore we have
\[\begin{split}
 \measapprox{k_\ell}{B}{\Upsilon}
 &= \measapprox{k_\ell}{\CS^2_j}{\Upsilon} + \measapprox{k_\ell}{\CS_j \setminus \CS^2_j}{\Upsilon} \\
 &\le \frac{c_*+c^*}{2} \meas{\CS^2_j}{\Upsilon} + (c^*+\epsilon) \meas{\CS_j \setminus \CS^2_j}{\Upsilon} \\
 &\le (c^*+\epsilon) \meas{B}{\Upsilon} - \frac{c^*-c_*}{2}\frac{q_2}{2} M^{-2} \meas{B}{\Upsilon} \\
 &\eqdef (c^*-\delta) \meas{B}{\Upsilon}
\end{split}\]
on an event with probability tending to $1$. This holds for each $B \in \CS$, hence we have shown that $\wt{\mu}(B) \le (c^*-\delta)\meas{B}{\Upsilon}$ in case the set of $j_0$ with $\#(\CJ \cap [j_0,2j_0]) \ge j_0/2$ is infinite.

In case the set of $j_0$ with $\#(\CJ \cap [j_0,2j_0]) \ge j_0/2$ is finite, a symmetric argument using~\eqref{eq:lb_improve} shows that $\wt{\mu}(B) \ge (c_*+\delta)\meas{B}{\Upsilon}$ for each $B \in \CS$. This concludes the proof.
\end{proof}

\subsection{Improving the convergence}
\label{se:convergence_improve}

Next, we improve Lemma~\ref{le:in_prob} to almost sure convergence.

\begin{lemma}\label{le:nonconstant_as}
 There exists a deterministic sequence $(c_k)$ with $0 < \liminf_{k\to\infty} c_k \le \limsup_{k\to\infty} c_k < \infty$ such that for each $B \in \CS$ and $n \in \N$ we have $c_k\measapprox{k}{B}{\Upsilon} \to \meas{B}{\Upsilon}$ almost surely along $k \in 2^{-n}\N$. Moreover, for each $\epsilon > 0$ we have
 \[
  \p{\left[ \abs{c_k\measapprox{k}{B}{\Upsilon}-\meas{B}{\Upsilon}} \ge \epsilon \right]} = o^\infty(e^{-k}) .
 \]
\end{lemma}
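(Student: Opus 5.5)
The plan is to upgrade the convergence in probability of Lemma~\ref{le:in_prob} to a quantitative concentration bound by splitting $B$ into many small squares and using the approximate independence across squares. A Borel--Cantelli argument along the countable set $k \in 2^{-n}\N$ then gives almost sure convergence. Take the sequence $(c_k)$ from Lemma~\ref{le:in_prob}, fix $B \in \CS$ and $\epsilon>0$, and pick an intermediate scale $j = \lfloor k/2 \rfloor$ (any $j$ with $j \to \infty$ and $k-j\to\infty$ linearly would do). By Lemma~\ref{le:approx_additivity}, up to an error of probability $o^\infty(e^{-k})$ we can write $\measapprox{k}{B}{\Upsilon} \approx \sum_{i=1}^4 \sum_{Q \in \CS_{j,i}} \measapprox{k}{Q}{\Upsilon}$. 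The measure $\meas{\cdot}{\Upsilon}$ is exactly additive. It therefore suffices to control, for each $i$, the sum $S_{j,i} = \sum_{Q \in \CS_{j,i}} X_Q$, where $X_Q = c_k\measapprox{k}{Q}{\Upsilon}-\meas{Q}{\Upsilon}$.

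First I will dispose of the non-separated squares and the squares carrying atypically large mass. Lemma~\ref{le:good_superbox} together with the argument in the proof of Lemma~\ref{le:comparability_tails} shows that, off an event of probability $O(e^{-bk})$, the squares $Q \in \CS_j^\Upsilon \setminus \CS_j^{\Upsilon,\separated}$ carry a small fraction of the total mass for both measures. Here I use a range of scales $j \in [k/3,k/2]$ and choose a good $j$ as in that proof. Via Lemma~\ref{le:comparability_tails}, the same holds for $c_k\measapprox{k}{\cdot}{\Upsilon}$, so this contribution is at most $\epsilon/4$. For the separated squares, condition on $\CF_{j,i,\alpha}$. The configurations in the different $Q\in\CS^{\Upsilon,\separated}_{j,i}$ are then conditionally independent multichordal \clek{}. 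After rescaling by $2^{j}$, each $X_Q$ equals $2^{-j\dcle}$ times the corresponding discrepancy at scale $k-j$ in a separated marked domain. By Lemmas~\ref{le:meas_cond_ub} and~\ref{le:approxmeas_moments}, the $X_Q 2^{j\dcle}$ have conditional moments of all orders bounded uniformly. A conditional Chebyshev/moment bound of high order $p$ on the centered sum then gives a deviation of size at most $N_{j,i}^{1/2+\delta}2^{-j\dcle}$, with conditional probability failing at most $O(N_{j,i}^{-b})$.

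Next I must show that the conditional means $\E[X_Q \mid \CF_{j,i,\alpha}]$ are small uniformly, i.e.\ $o(2^{-j\dcle})$ as $k-j\to\infty$. This is the main obstacle, because Lemma~\ref{le:in_prob} only gives convergence in probability for the unconditioned \clek{}, whereas we need it under every separated multichordal law. I would transfer it using Lemma~\ref{le:mcle_abs_cont}: events of small probability under $\p_D$ have uniformly small probability under the multichordal laws with $\dsep$-separated marked points. Convergence in probability of $c_{k-j}\measapprox{k-j}{\cdot}{\Upsilon}$ to $\meas{\cdot}{\Upsilon}$ on a fixed compact $K$ therefore holds uniformly over separated marked domains. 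Combined with the uniform higher moments (uniform integrability), this gives the vanishing of the conditional means. One subtlety: the rescaled quantity carries $c_k$ rather than $c_{k-j}$. I would handle this by first proving that $c_k/c_{k-j}\to 1$, or more simply by redefining $c_k$ through the scaling relation~\eqref{eq:approxmeas_scaling}, so that the constants match across scales. This may require choosing $c_k$ as a normalizing ratio of expectations. A further issue is that the gasket in $Q$ is the union of the gaskets attached to the appropriate boundary arcs; this is exactly the setting of Lemma~\ref{le:mcle_abs_cont} with a collection $\CI$ of wired arcs.

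Finally, I combine the pieces. By Proposition~\ref{pr:measure_lb}, $\meas{B}{\Upsilon}\ge 2^{-ak}$ off a superpolynomially small event (on $\Upsilon\cap(1-2^{-ak})B\neq\varnothing$; the boundary layer is handled by Proposition~\ref{pr:measure_ub}). Hence $N_{j,i}\gtrsim 2^{j(\dcle-a)}$ whenever these squares matter, so $O(N_{j,i}^{-b}) = o^\infty(e^{-k})$, and the fluctuation $N_{j,i}^{1/2+\delta}2^{-j\dcle}$ is negligible against $\epsilon$. Summing over $i=1,\dots,4$ gives $\p[\abs{c_k\measapprox{k}{B}{\Upsilon}-\meas{B}{\Upsilon}}\ge\epsilon]=o^\infty(e^{-k})$. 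Since along $k\in 2^{-n}\N$ these probabilities are summable, Borel--Cantelli gives almost sure convergence for each $B\in\CS$ and each $n$, and a countable union over $B$, $n$ and rational $\epsilon$ completes the proof.
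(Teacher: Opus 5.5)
Your route is genuinely different from the paper's, and it is workable.

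\textbf{How the paper argues.} The paper never looks at conditional means. It transfers Lemma~\ref{le:in_prob} (rescaled, with constant $c_{3k-j}$) to each separated square via Lemma~\ref{le:mcle_abs_cont}, purely as a statement about probabilities: each ratio $c_{3k-j}\measapprox{3k}{Q}{\Upsilon}/\meas{Q}{\Upsilon}$ lies in $[1-\epsilon,1+\epsilon]$ with conditional probability close to one. Conditional independence then makes all but an $\epsilon$-fraction of separated squares good. The exceptional squares are controlled by three ingredients: the uniform ratio bound $M_2$ from Lemma~\ref{le:comparability_tails} (event $G^2_j$), a boundary-layer estimate ($G^3_j$), and the restriction to $\CS^g_j$ together with $G^5_j$. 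Since squares in $\CS^g_j$ have mass at most $M2^{-j\dcle}$ and a positive fraction have mass at least $M^{-1}2^{-j\dcle}$, an $\epsilon$-fraction of squares carries only an $O(\epsilon M^3)$-fraction of the mass.

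\textbf{How your route differs.} You run a conditional law of large numbers in additive form. The fluctuation is handled by a high-moment bound on the centered sum. The conditional means converge uniformly by Lemma~\ref{le:mcle_abs_cont} plus uniform integrability, which you get from the conditional tails in Lemma~\ref{le:meas_cond_ub}. This absorbs heavy and exceptional squares automatically and dispenses with $G^2_j$, $G^3_j$, $G^5_j$. The price is that you need uniform integrability under all separated multichordal laws, and you must sum the mean errors. Those errors total $o(1)\,N_{j,i}2^{-j\dcle}$, where the $o(1)$ has no rate, so the trivial bound $N_{j,i}\le 2^{2j}$ is useless. You need $N_{j,i}2^{-j\dcle}\lesssim\meas{B}{\Upsilon}$ from Lemma~\ref{le:meas_large_dev}, which you do not mention. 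Note also that these errors, like your non-separated contribution, are relative to $\meas{B}{\Upsilon}$ rather than an absolute $\epsilon/4$. This is exactly as in the paper's multiplicative bounds.

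\textbf{The constant matching.} The one step you leave open is matching $c_k$ with $c_{k-j}$, and only your first suggestion works. Redefining $c_k$ as a ratio of expectations does not make the constants consistent across scales. Showing $c_k/c_{k-j}\to 1$ for such constants is essentially Lemma~\ref{le:constant_as}, whose proof uses the present lemma, so this would be circular.

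The paper gets the matching as a byproduct, and you can do the same:
\begin{enumerate}[(i)]
\item Run your argument with $X_Q=c_{k-j}\measapprox{k}{Q}{\Upsilon}-\meas{Q}{\Upsilon}$, since that is what the rescaled Lemma~\ref{le:in_prob} controls.
\item This gives $c_{k-j}\measapprox{k}{B}{\Upsilon}\approx\meas{B}{\Upsilon}$ with overwhelming probability on the event that $j$ is a good scale.
\item Compare with Lemma~\ref{le:in_prob} at scale $k$. If $\abs{c_{k-j}-c_k}$ stayed bounded away from zero, the event that $j$ is good and $\meas{B}{\Upsilon}$ is not small would have vanishing probability. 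Hence $c_{k-j}=c_k+o(1)$ for all but a small fraction of $j$ in the range.
\end{enumerate}
Because your good scale is chosen at random from $[k/3,k/2]$, you need this \emph{most $j$} version. On the good event, some good $j$ then also satisfies $c_{k-j}\approx c_k$. This is the content of the paper's parenthetical remark that $c_{3k}=c_{3k-j}+o(1)$ for at least $3/4$ of $k\le j\le 2k$.
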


\begin{proof}
Let $B \in \CS$. Throughout the proof $b$ denotes a large constant whose value may change from line to line.

Let $q \in (0,1)$. By Lemma~\ref{le:good_superbox}, we can choose $\dsep > 0$, $M>1$ so that the following hold. Let $G^1_k$ be the event that for each $Q \in \CS_{2k}$ with $Q \subseteq B$, for at least $q$ fraction of $j = k,\ldots,2k$, if we let $Q_{j} \in \CS_{j}$ be the square containing $Q$, then
\[ Q_{j} \in \CS^{\separated}_{j} \quad\text{and}\quad \meas{Q_{j}}{\Upsilon} \le M2^{-j\dcle} . \]
Then $\p[(G^1_k)^c] = O(e^{-bk})$.

Let $\CS^g_j \subseteq \CS^{\Upsilon,\separated}_j$ be the subset of squares $Q \subseteq B$ such that $\meas{Q}{\Upsilon} \le M2^{-j\dcle}$. On the event~$G^1_k$ we have
\[
 \sum_{j=k,\ldots,2k} \meas{\CS^g_j}{\Upsilon} \ge qk\meas{B}{\Upsilon} .
\]
In particular, for at least $7/8$ fraction of $k \le j \le 2k$ we have $\meas{\CS^g_{j}}{\Upsilon} \ge q_1\meas{B}{\Upsilon}$ where $1-q_1 = 8(1-q)$. Similarly, for at least $7/8$ fraction of $k \le j \le 2k$ we have $\measapprox{3k}{\CS^g_{j}}{\Upsilon} \ge q_1\measapprox{3k}{B}{\Upsilon}$.

For each $j=k,\ldots,2k$ we consider the following events. Let $a>0$ be a small constant and let $G^2_j$ be the event that $M_2^{-1} \le \frac{\measapprox{3k}{Q}{\Upsilon}}{\meas{Q}{\Upsilon}} \le M_2$ for each $Q \in \CS_j$ with $\Upsilon \cap (1-2^{-aj})Q \neq \varnothing$. By Lemma~\ref{le:comparability_tails}, we can fix $M_2>1$ (depending only on $b$) such that $\p[(G^2_j)^c] = O(e^{-bk})$.

On the other hand, if we let $G^3_j$ be the event that $\measapprox{3k}{Q \setminus (1-2^{-aj})Q}{\Upsilon} \le 2^{-j(1+a/2)\dcle}$ for each $Q \in \CS_j$, then $\p[(G^3_j)^c] = o^\infty(e^{-j})$ by Lemma~\ref{le:approxmeas_moments}.

Next, by Lemma~\ref{le:in_prob} we have
\[
 \frac{c_{3k-j}\measapprox{3k}{2^{-j}B}{2^{-j}\Upsilon}}{\meas{2^{-j}B}{2^{-j}\Upsilon}} = \frac{c_{3k-j}\measapprox{3k-j}{B}{\Upsilon}}{\meas{B}{\Upsilon}} \to 1 \quad\text{in probability.}
\]
In particular, for each $\epsilon > 0$,
\begin{equation}\label{eq:in_prob_scaled}
 \max_{j=k,\ldots,2k} \p{\left[ \frac{c_{3k-j}\measapprox{3k}{2^{-j}B}{2^{-j}\Upsilon}}{\meas{2^{-j}B}{2^{-j}\Upsilon}} \notin [1-\epsilon,1+\epsilon] \right]} \to 0 .
\end{equation}
Let
\[
 G^4_j = \left\{ 1-\epsilon \le \frac{c_{3k-j}\measapprox{3k}{Q}{\Upsilon}}{\meas{Q}{\Upsilon}} \le 1+\epsilon
 \quad\text{for at least $1-\epsilon$ fraction of $Q \in \CS^{\Upsilon,\separated}_{j}$} \right\} .
\]
By~\eqref{eq:in_prob_scaled} and Lemma~\ref{le:mcle_abs_cont}, for any choice of $\epsilon > 0$ we have for $k$ sufficiently large $\p[(G^4_j)^c] = O(e^{-bk})$ (at least when restricted to the event that $N_j \ge 2^{aj}$ for some $a>0$ as in the proofs of Lemma~\ref{le:comparability_tails} and~\ref{le:in_prob}).

Finally, let $G^5_j$ be the event that at least $1/2$ fraction of $Q \in \CS^{\Upsilon,\separated}_{j}$ is in $\CS^g_j$, and at least $M^{-1}$ fraction of $Q \in \CS^{\Upsilon,\separated}_{j}$ is in $\CS^g_j$ and satisfies $\meas{Q}{\Upsilon} \ge M^{-1}2^{-j\dcle}$. Then $\p[(G^5_j)^c] = O(e^{-bk})$ by Lemma~\ref{le:meas_cond_lb}.

Now suppose that we are on the event $G^2_j \cap G^3_j \cap G^4_j \cap G^5_j$. Let $\CS^4_j \subseteq \CS^g_j$ be the squares occurring in the event $G^4_j$. In particular, at most $2\epsilon$ fraction of the squares in $\CS^g_j$ is not in $\CS^4_j$, and $\meas{\CS^g_j \setminus \CS^4_j}{\Upsilon} \le 2\epsilon M^3 \meas{\CS^g_j}{\Upsilon}$. Moreover, each $\CS^g_j \setminus \CS^4_j$ satisfies either $\measapprox{3k}{Q}{\Upsilon} \le M_2 \meas{Q}{\Upsilon}$ (if $\Upsilon \cap (1-2^{-aj})Q \neq \varnothing$) or $\measapprox{3k}{Q}{\Upsilon} \le 2^{-j(1+a/2)\dcle}$ (if $\Upsilon \cap (1-2^{-aj})Q = \varnothing$). Therefore
\[\begin{split}
 \measapprox{3k}{\CS^g_j}{\Upsilon}
 &= \measapprox{3k}{\CS^4_j}{\Upsilon} + \measapprox{3k}{\CS^g_j \setminus \CS^4_j}{\Upsilon} \\
 &\le c_{3k-j}^{-1}(1+\epsilon)\meas{\CS^4_j}{\Upsilon} + M_2 \meas{\CS^g_j \setminus \CS^4_j}{\Upsilon} +O(2^{-ja/2})\meas{\CS^g_j}{\Upsilon} \\
 &\le (c_{3k-j}^{-1}+\epsilon M^4)\meas{\CS^g_j}{\Upsilon} .
\end{split}\]
Hence, on the event that $\measapprox{3k}{\CS^g_{j}}{\Upsilon} \ge q_1\measapprox{3k}{B}{\Upsilon}$ we have
\[
 \measapprox{3k}{B}{\Upsilon} \le (c_{3k-j}^{-1}+\epsilon M^4)q_1^{-1}\meas{B}{\Upsilon} .
\]
Similarly, on the event that $\meas{\CS^g_{j}}{\Upsilon} \ge q_1\meas{B}{\Upsilon}$ we have
\[
 \measapprox{3k}{B}{\Upsilon} \ge (c_{3k-j}^{-1}-2\epsilon M^3)q_1 \meas{B}{\Upsilon}
\]
where here we also used Lemma~\ref{le:approx_additivity}. Since we can take $q_1$ close to $1$ and then take $\epsilon$ as small as we want, we conclude. (This shows also that $c_{3k} = c_{3k-j}+o(1)$ for at least $3/4$ fraction of $k \le j \le 2k$.)
\end{proof}

To complete the proof of Theorem~\ref{th:minkowski}, we need to argue that the convergence in Lemma~\ref{le:nonconstant_as} also holds when $c_k$ is taken to be a constant.

\begin{lemma}\label{le:constant_as}
 The sequence $(c_k)$ in Lemma~\ref{le:nonconstant_as} is convergent.
\end{lemma}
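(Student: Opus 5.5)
The plan is to exploit the exact scaling \eqref{eq:approxmeas_scaling} together with the almost sure convergence from Lemma~\ref{le:nonconstant_as} at two different scales. Fix $B \in \CS$ and $j \in \N$, and consider the rescaled configuration $2^{-j}\Gamma$ in $2^{-j}D$. The scaling \eqref{eq:approxmeas_scaling} gives $\measapprox{k+j}{2^{-j}B}{2^{-j}\Upsilon} = 2^{-j\dcle}\measapprox{k}{B}{\Upsilon}$. By the scaling covariance of the gasket measure (conformal covariance with $\varphi(z) = 2^{-j}z$), we also have $\meas{2^{-j}B}{2^{-j}\Upsilon} = 2^{-j\dcle}\meas{B}{\Upsilon}$. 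Lemma~\ref{le:nonconstant_as} applies to the \clek{} in the domain $2^{-j}D$ as well, since the sequence $(c_k)$ there is deterministic and, by translation/scale invariance and absolute continuity, does not depend on the domain. This gives
\[ c_{k+j}\measapprox{k+j}{2^{-j}B}{2^{-j}\Upsilon} \to \meas{2^{-j}B}{2^{-j}\Upsilon}. \]
Dividing by $2^{-j\dcle}$ and comparing with $c_k\measapprox{k}{B}{\Upsilon} \to \meas{B}{\Upsilon}$, I expect to obtain $c_{k+j}/c_k \to 1$ as $k \to \infty$, for every fixed $j$ (and for every fixed real shift $t \in 2^{-n}\N$ in the continuous variants). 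This uses that $\meas{B}{\Upsilon} > 0$ with positive probability, which holds by Lemma~\ref{le:meas_nonzero}.

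The issue is to make this comparison legitimate, because $(c_k)$ was produced with the domain $D$ fixed. I would redo the argument with the domain sizes matched. It is enough to note that for a fixed $B$ and compact $K$, the laws of $\Gamma$ near $B$ in $D$ and in $2^{-j}D$ (after translating $B$ into $2^{-j}D$, or comparing $\Gamma$ in $D$ with a \clek{} in $2^jD \supseteq D$) are mutually absolutely continuous locally by \cite[Lemma~7.2]{my2025geouniqueness}. Almost sure limits transfer under absolute continuity, so the same deterministic $c_k$ works in every domain.

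Having $c_{k+t}/c_k \to 1$ for each fixed shift only shows slow variation, not convergence, and I expect this to be the main obstacle. To get convergence I would use the quantitative tail bound in Lemma~\ref{le:nonconstant_as}, namely $\p[\abs{c_k\measapprox{k}{B}{\Upsilon}-\meas{B}{\Upsilon}} \ge \epsilon] = o^\infty(e^{-k})$, uniformly over shifts $j \le k$. Apply it simultaneously at scales $k$ and $2k$ to the square $2^{-k}B$ in the rescaled picture. Scaling then identifies $c_{2k}\measapprox{2k}{2^{-k}B}{2^{-k}\Upsilon} \cdot 2^{k\dcle}$ with $c_{2k}\measapprox{k}{B}{\Upsilon}$, so on an event of probability bounded below (uniformly in $k$) on which $\meas{B}{\Upsilon} \ge \delta_0$, we get $\abs{c_{2k}-c_k} \lesssim \epsilon$ for all large $k$ once the error terms are summable. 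The cleaner route is to use the remark at the end of the proof of Lemma~\ref{le:nonconstant_as}, that $c_{3k} = c_{3k-j}+o(1)$ for most $j \in [k,2k]$, with an explicit rate. If the $o(1)$ can be taken summable along the chain $k, 3k/2, (3/2)^2k,\dots$, these increments telescope to a Cauchy sequence. Interpolation between nearby indices then follows from slow variation together with the uniform bounds $0<\liminf c_k\le\limsup c_k<\infty$.

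If a rate is not available, I would instead argue by contradiction. Suppose two subsequences satisfy $c_{k_m}\to\underline c$ and $c_{k'_m}\to\overline c$ with $\underline c<\overline c$. Both $c_{k_m}\measapprox{k_m}{B}{\Upsilon}$ and $c_{k'_m}\measapprox{k'_m}{B}{\Upsilon}$ converge almost surely to $\meas{B}{\Upsilon}$. Slow variation lets me choose $k'_m$ as a fixed shift $k_m+j_m$ of $k_m$ along a single fine lattice $2^{-n}\N$. Lemma~\ref{le:nonconstant_as} asserts almost sure convergence along the whole lattice $2^{-n}\N$ with the same sequence $c$. The plan is to show that this forces $\measapprox{k}{B}{\Upsilon}$ to oscillate by the factor $\overline c/\underline c$ along the lattice, and to contradict that using the Minkowski-type monotonicity of $\measapprox{k}{B}{\Upsilon}$ in $k$ up to constant factors. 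That contradiction is the step I am least sure of, and it is where I would spend the effort.
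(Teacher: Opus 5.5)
Your first step is sound: exact scaling together with local absolute continuity across domains gives $c_{k+j}/c_k\to 1$ for each fixed $j$. You are also right that this is only slow variation. The gap is that none of your three routes from slow variation to convergence works.

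\emph{Route (1) is circular.} Lemma~\ref{le:nonconstant_as} gives a tail bound for a \emph{fixed} square with an absolute error $\epsilon$, and says nothing uniform over shifts. The estimate you want for the pair $(2^{-k}D,2^{-k}B)$ at index $2k$ is $c_{2k}\measapprox{2k}{2^{-k}B}{2^{-k}\Upsilon}\approx\meas{2^{-k}B}{2^{-k}\Upsilon}$ up to relative error $\epsilon$. By \eqref{eq:approxmeas_scaling} this is literally $c_{2k}\measapprox{k}{B}{\Upsilon}\approx\meas{B}{\Upsilon}$, i.e.\ $c_{2k}\approx c_k$, which is what you are trying to prove. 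Even granting $c_{2k}-c_k\to0$, that does not give convergence without a summable rate.

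\emph{Route (2) needs a rate that does not exist.} The $o(1)$ in $c_{3k}=c_{3k-j}+o(1)$ ultimately comes from the convergence in probability of Lemma~\ref{le:in_prob}. That lemma is proved by compactness and contradiction, so it carries no quantitative control.

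\emph{Route (3) cannot produce a contradiction.} Your own slow variation forces the shift $j_m\to\infty$. Once the shift grows, almost sure convergence along $2^{-n}\N$ with the same $(c_k)$ is consistent with $\measapprox{k}{B}{\Upsilon}$ oscillating by the factor $\overline{c}/\underline{c}$ spread over ever longer ranges of scales. Monotonicity up to constant factors cannot detect this. For example, $c_k=2+\sin(\log\log k)$ satisfies:
\begin{itemize}
\item $c_{k+j}-c_k\to0$ for each fixed $j$;
\item $\max_{m\in[k,2k]}\abs{c_{3k}-c_m}\to0$;
\item $0<\liminf c_k\le\limsup c_k<\infty$;
\item consecutive ratios of $c_k^{-1}\meas{B}{\Upsilon}$ tend to $1$, well inside the deterministic box-count bounds $2^{-\dcle}\measapprox{k}{B}{\Upsilon}\le\measapprox{k+1}{B}{\Upsilon}\le2^{2-\dcle}\measapprox{k}{B}{\Upsilon}$.
\end{itemize}
Yet it does not converge. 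So everything you extract is consistent with non-convergence.

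The missing idea is an inequality between $c_{j+k}$ and $c_k$ that is \emph{uniform in the shift} $j$. The paper gets it by passing to expectations.
\begin{itemize}
\item Uniform second moments (Lemma~\ref{le:approxmeas_moments}) and the superpolynomial tail of Lemma~\ref{le:nonconstant_as} give, via Cauchy--Schwarz, $c_k\E[\measapprox{k}{B}{\Upsilon}]=\E[\meas{B}{\Upsilon}]+o(1)$.
\item For each $Q\in\CS_j$ with $Q\subseteq B$, condition on the exploration until it hits $2Q$. After rescaling by $2^j$, the remainder is a monochordal $\CLE_\kappa$ whose law is controlled uniformly in $j$.
\item The same $L^1$ argument, together with \eqref{eq:approxmeas_scaling} and a sum over $Q$, then gives $c_k\E[\measapprox{j+k}{\CS_j}{\Upsilon}]=\E[\meas{B}{\Upsilon}]+o(1)$, with $o(1)$ uniform in $j$.
\item The one-sided subadditivity $\measapprox{j+k}{B}{\Upsilon}\le\measapprox{j+k}{\CS_j}{\Upsilon}$ then yields $c_{j+k}\ge c_k-o(1)$ uniformly in $j\ge0$.
\end{itemize}
A bounded, approximately nondecreasing sequence converges: choose $k$ with $c_k$ close to $\limsup c$ to get $\liminf c\ge\limsup c$. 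Linearity of expectation performs the averaging over the $2^{2j}$ subsquares uniformly in $j$. That uniformity is exactly what your pathwise approach lacks.
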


\begin{proof}
 First, we argue that
 \[
  c_k\E[\measapprox{k}{B}{\Upsilon}] = \E[\meas{B}{\Upsilon}]+o(1)
  \quad\text{as } k \to \infty .
 \]
 By Lemma~\ref{le:meas_cond_ub} we have
 \[
  \sup_k \E[\measapprox{k}{B}{\Upsilon}^2] < \infty .
 \]
 Hence, by Lemma~\ref{le:nonconstant_as}, we see that for each $\epsilon > 0$,
 \[
  \E{\left[ \measapprox{k}{B}{\Upsilon} \one_{\abs{c_k\measapprox{k}{B}{\Upsilon}-\meas{B}{\Upsilon}} \ge \epsilon} \right]}
  \le \E{\left[\measapprox{k}{B}{\Upsilon}^2\right]}^{1/2} \p{\left[ \abs{c_k\measapprox{k}{B}{\Upsilon}-\meas{B}{\Upsilon}} \ge \epsilon \right]}^{1/2}
  = o^\infty(e^{-k}) .
 \]
 Now let $j \in \N$. Then
 \[
  \E[ \measapprox{j+k}{\CS_j}{\Upsilon} ]
  = \sum_{Q \in \CS_j} \p[ \Upsilon \cap 2Q \neq \varnothing ] \E[ \measapprox{j+k}{Q}{\Upsilon} \mid \Upsilon \cap 2Q \neq \varnothing ]
 \]
 By conditioning on the \clek{} exploration until hitting $2Q$ and applying the same argument as above, we see that
 \[
  c_k 2^{j\dcle}\E[ \measapprox{j+k}{\CS_j}{\Upsilon} \mid \Upsilon \cap 2Q \neq \varnothing ] = 2^{j\dcle}\E[ \meas{Q}{\Upsilon} \mid \Upsilon \cap 2Q \neq \varnothing ] + o(1) .
 \]
 We conclude that
 \[
  \E[\meas{B}{\Upsilon}] + o(1) = c_{j+k}\E[\measapprox{j+k}{B}{\Upsilon}] \le c_{j+k}\E[\measapprox{j+k}{\CS_j}{\Upsilon}] = \frac{c_{j+k}}{c_k}\E[\meas{B}{\Upsilon}] + o(1) ,
 \]
 and hence $c_{j+k} \ge c_k+o(1)$ for all $j,k$ sufficiently large.
\end{proof}

\begin{proof}[Proof of Theorem~\ref{th:minkowski}]
 For the exterior gasket $\Upsilon$, we have proved in Lemmas~\ref{le:nonconstant_as}--\ref{le:constant_as} the almost sure convergence in Theorem~\ref{th:minkowski} along sequences $k \in 2^{-n}\N$ for each $n \in \N$. The unrestricted limit follows by the monotonicity of the ball counts as we vary $k > 0$.

 The result for the interior gaskets follows by comparing the law of $\Upsilon_\CL$ to $\Upsilon$ locally using \cite[Section~5.2]{amy-cle-resampling}.
\end{proof}

\bibliographystyle{alpha}
\providecommand{\noopsort}[1]{}

\end{document}